\newtheorem{defin}{Definition}
\newtheorem{prop}{Proposition}
\newtheorem{thm}{Theorem}
\newtheorem{lem}{Lemma}
\newtheorem*{thme}{Theorem [Emerton]}
\newtheorem*{thma}{Theorem A}
\newtheorem*{thmlg}{Theorem (Local to Global Compatibility)}
\newtheorem*{conjll}{Conjecture (Level Lowering)}
\newtheorem*{conj}{Conjecture}
\newtheorem*{thmll}{Theorem (Level Lowering)}
\newtheorem*{conjlr}{Conjecture (Level Raising)}
\newtheorem*{thmlr}{Theorem (Level Raising)}
\newtheorem{cor}{Corollary}
\newcommand{\wf}{W(\mathbb{F})}
\newcommand{\f}{\mathbb{F}}
\newcommand{\arwf}{\mathfrak{AR}_{W(\mathbb{F})}}
\newcommand{\vf}{V_{\f}}
\newcommand{\rbf}{R^{\Box}_{\vf}}
\newcommand{\rf}{R_{\vf}}
\newcommand{\qp}{\mathbb{Q}_p}
\newcommand{\ra}{\mathcal{R}_A}
\newcommand{\are}{\mathfrak{AR}_{E}}
\newcommand{\xf}{\mathfrak{X}^{\Box}_{\vf}}
\newcommand{\x}{\mathfrak{X}_{\vf}}
\newcommand{\xflb}{\mathfrak{X}^{\Box}_{V^l_{\mathbb{F}}}}
\newcommand{\xflbsp}{\mathfrak{X}^{\Box, sp}_{V^l_{\mathbb{F}}}}
\newcommand{\xflbn}{\mathfrak{X}^{\Box, N=0}_{V^l_{\mathbb{F}}}}
\newcommand{\xfb}{\mathfrak{X}^{\Box}_{\vf}}
\newcommand{\rbe}{R^{\Box}_{V_E}}
\newcommand{\rbfn}{R^{\Box, N=0}_{\vf}}
\newcommand{\xfn}{\mathfrak{X}^{\Box, N=0}_{\vf}}
\newcommand{\xff}{\mathfrak{X}_{\vf}}
\newcommand{\fv}{F_{\textit{v}}}
\newcommand{\gv}{G_{\fv}}
\newcommand{\rbv}{R^{\Box}_{\textit{v}}}
\newcommand{\rbs}{R^{\Box}_{\Sigma}}
\newcommand{\rbfs}{R^{\Box}_{F,S}}
\newcommand{\mbs}{\mathfrak{m}^{\Box}_{\Sigma}}
\newcommand{\mbfs}{\mathfrak{m}^{\Box}_{F, S}}
\newcommand{\mbss}{\mathfrak{m}^{\Box, 2}_{\Sigma}}
\newcommand{\mbfss}{\mathfrak{m}^{\Box, 2}_{F, S}}
\newcommand{\gqs}{G_{\mathbb{Q}, S}}
\newcommand{\gqp}{G_{\mathbb{Q}_p}}
\newcommand{\gql}{G_{\mathbb{Q}_l}}
\newcommand{\bc}{B_{cris}}
\newcommand{\q}{\mathbb{Q}}
\newcommand{\cp}{\mathbb{C}_p}
\newcommand{\ql}{\mathbb{Q}_l}
\newcommand{\vi}{\textit{v}}
\begin{document}

\title{Failure of the Local to Global Principle on the Eigencurve}
\author{Alexander G.M. Paulin}
\begin{abstract}
If $\pi_f$ is a cuspidal automorphic representation of $GL_{2/\q}$ associated to a modular form $f$, the local and global Langlands correspondences are compatible at all finite places of $\q$.  On the $p$-adic Coleman-Mazur eigencurve this principle can fail (away from $p$) under one of two conditions: on a generically principal series component where monodromy vanishes; or on a generically special component where the ratio of the Satake parameters degenerates.  We prove, under mild restrictive hypotheses,  that such points are the intersection of  generically principal series and special components. This is a geometric analogue of Ribet's level raising and lowering theorems.
\end{abstract}

\maketitle

\section{Introduction}
\noindent
Let $\pi_f$ be a cuspidal automorphic representation of $GL_{2/\q}$ associated to a cuspidal modular form $f$.  Let  $p$ be a prime number and fix $\bar{\q}$, an algebraic closure of $\q$ .  To $\pi_f$ we may naturally associate $\rho_f$, a  2-dimensional, $p$-adic potentially semi-stable representation of $G_\q:=Gal(\bar{\q}/\q)$, naturally fitting into a compatible system.   Each of these global objects naturally decomposes into local data.  

On the automorphic side, the restriction of  $\pi_f$ to the finite adeles is the restricted tensor product
$$\pi_f =\otimes' \pi_{f,l}$$
where $\pi_{f,l}$ is a smooth admissible irreducible representation of $GL_2(\q_l)$, for $l$ prime.  On the Galois side, for any prime $l$, $\rho_f$ gives rise to a 2-dimensional Frobenius semisimple Weil-Deligne representation $(\rho_{f,l}, N_f)$. 

If $\pi$ is the Tate normalised local Langlands correspondence then by work of Carayol (\cite{CAR}), Saito (\cite{TS}), et al,  local to global compatibility holds, i.e.
$$\pi(\rho_{f,l}, N_f) \cong \pi_{f, l}.$$  
In \cite{PAU} we extend this result to families of finite slope overconvergent modular forms.  More precisely,  for $N$ a natural number coprime to $p$,  let $\mathcal{E}$ be the cuspidal eigencurve (for the full Hecke algebra) of tame level $N$.  Let $l$ be a prime not equal to $p$.  By \cite{PAU}, to any  $x \in \mathcal{E}$ we may associate $\pi_{x,l}$, a smooth admissible representation of $GL_2(\q_l)$ and $(\rho_{x,l}, N_{x})$ a 2-dimensional Frobenius semisimple Weil-Deligne representation.  The central result of \cite{PAU} is:

\begin{thmlg}Away from a discrete subset $\mathcal{X}\subset \mathcal{E}$, local to global compatibility holds, i.e. for $x\in \mathcal{E} \setminus \mathcal{X}$ we have
$$\pi(\rho_{x,l}, N_x) \cong \pi_{x, l}.$$  
\end{thmlg}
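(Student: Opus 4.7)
\medskip
\noindent\textbf{Proof proposal.} The plan is to deduce the theorem from classical local--global compatibility by $p$-adic interpolation along $\mathcal{E}$. The classical points---those arising from cuspidal modular forms of weight $\geq 2$---are Zariski dense in $\mathcal{E}$, and at each such point the theorem of Carayol--Saito already gives $\pi(\rho_{f,l},N_f)\cong\pi_{f,l}$. One therefore must show that both sides of the putative isomorphism behave sufficiently well in rigid analytic families so that equality on a Zariski dense subset forces equality on the complement of some discrete $\mathcal{X}\subset\mathcal{E}$.

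First I would construct the two sides as families over $\mathcal{E}$. On the Galois side the eigencurve carries a continuous pseudo-representation of $\gqs$ for $S$ the set of primes dividing $Np\infty$; restriction to a decomposition group at $l$ together with pointwise application of Grothendieck's $l$-adic monodromy theorem yields Frobenius semisimple Weil--Deligne data $(\rho_{x,l},N_x)$ at each $x$. On the automorphic side, the Hecke eigensystem at primes $l\nmid Np$ determines the Satake parameters of an unramified $\pi_{x,l}$, while at primes dividing $N$ the local component is pinned down by the prescribed tame level structure, producing a well-defined smooth admissible representation $\pi_{x,l}$.

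Next I would match the two sides. Local Langlands is characterised by $L$- and $\epsilon$-factors (and, at the tamely ramified primes, by the inertial type), all of which vary analytically in Frobenius traces and in Hecke eigenvalues. Agreement at all classical points---an analytically dense subset---thus propagates to an analytically dense compatibility locus, and a rigidity argument upgrades this to identification on the complement of a discrete subset $\mathcal{X}\subset\mathcal{E}$.

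The main obstacle, and the reason $\mathcal{X}$ cannot be taken empty, is that the monodromy operator $N_x$ is not continuous in $x$. On a generically principal series component $N_x$ vanishes generically but can jump at isolated points to produce a special representation, and on a generically special component the two Satake parameters may collide and force a similar mismatch. These are precisely the pathologies around which the rest of the present paper is organised. Controlling them requires a detailed study of the local deformation space $\xf$ together with its stratification by monodromy and Hodge--Tate type; this fine local analysis, rather than the soft interpolation of the previous paragraphs, is where the real work resides.
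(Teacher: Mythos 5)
This statement is not proved in the present paper: it is quoted verbatim as ``the central result of \cite{PAU}'' and is cited, not re-derived. There is therefore no internal proof to compare your sketch against; what you have written is a plausible reconstruction of the strategy of \cite{PAU}, and it does capture the shape of the argument reflected in the fuller restatement in \S 5 (classical points give compatibility via Carayol--Saito, interpolation in families, failure confined to the two degenerations involving jumps in monodromy or collision of Satake parameters).

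That said, as a proof your sketch has a real gap. In paragraph three you assert that the matching propagates because the local data ``vary analytically,'' and in paragraph four you correctly observe that $N_x$ is \emph{not} continuous in $x$; these two statements are in tension, and the whole content of the theorem lives in resolving it. In particular you never establish why the failure locus is \emph{discrete} rather than, say, a positive-dimensional analytic subvariety of a component. The construction of $\pi_{x,l}$ at a non-classical point $x$ when $l \mid N$ is also non-trivial and cannot be dismissed as ``pinned down by the prescribed tame level structure''; making sense of the automorphic side across the whole eigencurve (not just at classical points) is itself one of the main technical burdens of \cite{PAU}. So your outline identifies the correct ingredients and the correct obstacle, but the ``rigidity argument'' and the discreteness claim are precisely the steps that require the detailed work, and they are asserted rather than argued.
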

In \cite{PAU} it is also shown that on the irreducible locus of $\mathcal{E}$, failure can only occur when $\pi(\rho_{x,l}, N_x)$ is one dimensional, something with cannot happen classically.  In this paper we address the geometric consequences of this failure.

Let $l$ and $p$ be two distinct rational primes. This will be the convention adopted throughout the rest of the paper. Let $S$ be a finite set of places of $\mathbb{Q}$ containing $p$, $l$ and $\infty$.   Let $\f$ be a finite field of characteristic $p$ and $\vf$ be a two dimensional $\f$-vector space equipped with a continuous, odd action of $G_{\mathbb{Q}, S}$.  Let us, furthermore, assume that $end_{\f[G_{\mathbb{Q}, S}]}\vf = \f$ and that $\vf$ is modular, associated to a cuspidal eigenform.  If $\vf$ is absolutely irreducible this last condition is automatically satisfied by Serre's conjecture (now a theorem of Khare, Wintenberger and Kisin). Let $\x$ denote the universal deformation space associated to $\vf$.

As explained in \S5, there is a closed subspace $\mathcal{E}\subset \x \times \mathbb{G}_m$, the cuspidal eigencurve lying over $\vf$ (see \S 6 of \cite{Kis4} for a more detailed Galois theoretic construction, or \S2 of \cite{KB} for a Hecke theoretic construction).   As above, to any $x\in \mathcal{E}$ we may naturally associate a 2-dimensional Frobenius semisimple Weil-Deligne representation $(\rho_{x,l}, N_{x})$.  We remark that the Hecke construction of $\mathcal{E}$ given in \cite{KB} involves glueing restricted Hecke Algebras, i.e. those away from the level.  Hence, strictly speaking, $\cp$-valued points on $\mathcal{E}$ correspond to systems of eigenvalues associated to a finite slope overconvergent cuspidal eigenforms, and not necessarily to eigenforms themselves. Hence, this \textit{eigencurve} is subtly different from the one introduced above.  There is, of course, a natural morphism between the two as explained in \S5. We make the following conjecture:

\begin{conj} Let $x\in \mathcal{E}$ be a point such that $\pi(\rho_{x,l}, N_{x})$ is one dimensional.  Then there exist irreducible components $\mathcal{Z}, \mathcal{Z}'\subset \mathcal{E}$,  generically special and principal series respectively, such that $x\in \mathcal{Z}\cap\mathcal{Z}'$.
\end{conj}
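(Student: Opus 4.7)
The conjecture naturally decomposes into two independent assertions: the existence of a generically special component through $x$ (the ``level raising'' half) and the existence of a generically principal series component through $x$ (the ``level lowering'' half). The plan is to handle each with its own deformation-theoretic argument, modelled on Ribet's classical theorems.

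First I would carry out a local analysis at $l$. Let $V^l_{\mathbb{F}}$ denote the restriction of $\vf$ to $G_{\ql}$ and consider the framed local deformation space $\xflb$. Inside $\xflb$ the locus $\xflbn$ on which monodromy vanishes is a natural closed subspace, and the special locus $\xflbsp$ -- where the Frobenius eigenvalues satisfy the compatibility relation $\alpha/\beta = l$ forced by a nonzero monodromy -- is another. The hypothesis that $\pi(\rho_{x,l}, N_x)$ is one-dimensional says precisely that the image of $x$ in $\xflb$ lies on the intersection $\xflbn \cap \xflbsp$: either $N_x = 0$ but the Frobenius ratio sits at the degenerate value, or $N_x \neq 0$ and the Weil-Deligne relation forces the Frobenius ratio to the degenerate value, in each case collapsing the local Langlands image to one dimension. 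A tangent-space computation inside $\xflb$ should show the two subspaces meet transversally at this boundary point, each contributing an extra direction of deformation.

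Next I would globalize via modularity. The eigencurve $\mathcal{E}$ maps to the global deformation space $\x$ and, by restriction at $l$, to $\xflb$. Assuming modest Taylor-Wiles-type hypotheses on $\vf$, an $R = T$ theorem should identify a suitable completion of $\mathcal{E}$ at $x$ with a local piece of a global deformation ring cut out by conditions at $l$ and $p$. It then suffices to exhibit two distinct formal components of this deformation ring through $x$, one mapping generically into $\xflbn$ and one into $\xflbsp$. By density of classical points on $\mathcal{E}$, this reduces to a classical existence statement: there exist classical cuspidal eigenforms congruent to $\vf$ realizing both local types at $l$ and deforming the parameter at $x$. The lowering half is the easier direction, since the $N = 0$ condition is Zariski closed and classical forms of tame level prime to $l$ can be produced via weight-two specialization once the residual condition is met. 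The main obstacle is the level raising half: one must produce a classical Steinberg-at-$l$ eigenform whose Weil-Deligne parameter specializes to exactly the degenerate one at $x$, rather than to a nearby parameter sharing the same residual representation. Controlling which parameter is hit requires a careful tangent-space analysis of $\mathcal{E}$ at $x$ via Selmer groups for the adjoint representation with prescribed ramification at $l$, and this is where the bulk of the technical work should lie.
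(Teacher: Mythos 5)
Your opening moves match the paper: you decompose the statement into a level raising half and a level lowering half, you analyze the local deformation space $\xflb$ at $l$ and identify the closed subspaces $\xflbn$ and $\xflbsp$, you observe that the hypothesis on $\pi(\rho_{x,l},N_x)$ places $x$ on their intersection, and you invoke an $R=T$ identification of $\mathcal{E}$ with a Galois-theoretic object (this is Emerton's theorem that $\mathcal{E} = X_{fs}$). All of that is the correct skeleton.

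The gap is in the final step. You propose to "reduce to a classical existence statement: there exist classical cuspidal eigenforms congruent to $\vf$ realizing both local types at $l$ and deforming the parameter at $x$," and you acknowledge that hitting the precise degenerate parameter at $x$ is the main obstacle. That obstacle is fatal to your route: the point $x$ is in general a genuinely non-classical point of the eigencurve (indeed, under the hypotheses of Theorem A it \emph{cannot} be classical, since $\pi(\rho_{x,l},N_x)$ one-dimensional never happens for classical forms), so no classical eigenform passes through $x$ and no argument of Ribet--Diamond--Taylor type can be made to land exactly there. The paper sidesteps this entirely: it never produces any eigenform through $x$. Instead it shows directly, by a dimension count, that the completed local ring of $X_{fs}^{sp}$ (resp.\ $X_{fs}^{N=0}$) at $x$ has Krull dimension $\geq 1$. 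The inputs are (i) Theorem~3, that $\xflbsp$ is formally smooth of dimension $d^2 = 4$, and Theorem~1, that $\xflbn$ is formally smooth of dimension $4$; (ii) the trianguline/$h$-deformation theory at $p$ (\S3.2--3.5), giving that the relevant local deformation ring at $p$ is formally smooth of framed dimension $6$; and (iii) a characteristic-zero analogue of Kisin's presentation of global deformation rings over local ones (Propositions~13--14), which yields $\hat{\mathcal{O}}_{X_{fs}^{\Box, *},(x,\lambda)} \cong (R_{V_x^p}^{\Box,h}\hat{\otimes}_E R_{V_x^l}^{\Box,*})[[x_1,\dots,x_r]]/(f_1,\dots,f_{r+2})$, hence dimension $\geq 6 + 4 + r - (r+2) = 8$, and $\geq 1$ after removing the $7$ framing variables. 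Classical points enter only at the very end, and only to determine the \emph{generic local type} of the component so produced (via density of classical points on each irreducible component of $\mathcal{E}$), not to establish its existence.

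Two further things you omit that the paper needs: the trianguline condition at $p$ is an essential part of what cuts $X_{fs}$ out of $\x\times\mathbb{G}_m$, and computing its dimension contribution requires the comparison of Kisin's $h$-deformation functor with the Bella\"iche--Chenevier trianguline deformation functor (Proposition~11); and the requirement $S\setminus\{l,p,\infty\}\neq\emptyset$ is needed so that the hypothesis of Proposition~14 (injectivity of the map $\theta$) holds. Your proposed transversality argument at $l$ is also not how the paper proceeds: the two halves are treated separately and symmetrically (Theorems~7 and~8 have identical proofs), so the asymmetry you assert between the "easier" lowering half and the "harder" raising half is not present.
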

This is equivalent to a combination of the following geometric analogues of Ribet's level lowering and raising theorems:
 
 \begin{conjll}
Let $\mathcal{Z}\subset \mathcal{E}$ be a generically special (at $l$) component such that there exists $x\in \mathcal{Z}$ such that $N_{x} = 0$, then there exists $ \mathcal{Z}'\subset \mathcal{E}$, a generically principal series irreducible component, such that $x \in\mathcal{Z}'$. 
\end{conjll}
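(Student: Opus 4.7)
The plan is to reduce the statement to a geometric fact about the irreducible components of the local framed deformation ring $\xflb$ at $l$, and then transfer this picture to $\mathcal{E}$ via the construction described in \S5.

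First, I would exploit that $\mathcal{E}$ sits as a closed subspace of $\x \times \mathbb{G}_m$, together with the natural restriction morphism $\x \to \xflb$ (after framing). The completed local ring of $\mathcal{E}$ at $x$ maps to that of $\xflb$ at the image of $x$, and each irreducible component of $\mathcal{E}$ through $x$ is supported over a unique irreducible component of $\xflb$ through the image. In particular, $\mathcal{Z}$ is carried into a generically special (Steinberg-type) component of $\xflb$, namely a twist of $\xflbsp$, and the hypothesis $N_x = 0$ places the image of $x$ in $\xflbsp \cap \xflbn$. The task is therefore split into (a) a local statement identifying a second component of $\xflb$ through this image whose generic point is principal series, and (b) a globalization step showing this second local component is the local image of a global component of $\mathcal{E}$ through $x$.

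For (a), I would use the now well understood description of the framed local deformation rings of $\bar\rho|_{G_{\q_l}}$ for $l\neq p$ (Pilloni, Shotton, Choi, Kisin), which stratifies $\xflbn$ by inertial type. At the image of $x$ the Weil--Deligne representation degenerates: since $x$ lies on the Steinberg component with $N=0$, its Frobenius eigenvalues $\alpha,\beta$ are forced to satisfy the special ratio $\alpha/\beta=l$. This Frobenius condition cuts out precisely the overlap locus, and one verifies that beyond the Steinberg component a second irreducible component of $\xflb$ passes through this point whose generic fibre is the unramified (or tamely ramified, depending on $\bar\rho$) principal series with Frobenius eigenvalues $(\alpha,\beta)$. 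Thus locally the two components meet through the image of $x$.

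For (b), which I expect to be the main obstacle, one must upgrade this formal local component to an honest irreducible component $\mathcal{Z}'$ of $\mathcal{E}$ through $x$. The difficulty is that not every component of $\xflb$ need come from a component of $\mathcal{E}$: one must ensure the existence of enough overconvergent finite-slope eigenforms, unramified-principal-series at $l$, accumulating at $x$. A natural strategy is to specialise $x$ along $\mathbb{G}_m$ to a classical weight, apply Ribet's classical level lowering to produce classical eigenforms of principal series type at $l$ with the correct residual representation, and then use infinite-fern density arguments (Gouv\^ea--Mazur, Coleman, Chenevier) to show such points accumulate at $x$; their Zariski closure in $\mathcal{E}$ provides a candidate for $\mathcal{Z}'$, whose genericity is forced by the local picture from step (a). Alternatively, following the Galois-theoretic construction of $\mathcal{E}$ in \cite{Kis4}, one can pull back the principal series component of $\xflb$ along the natural morphism, and use flatness or a dimension count, together with the already constructed trianguline/$p$-adic Hodge conditions at $p$, to verify that the pullback meets $\mathcal{E}$ in a curve through $x$. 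The technical heart of either approach is to show that the local principal series stratum survives the imposition of the finite-slope condition at $p$ and genuinely supports eigencurve points specialising to $x$; this is where the ``mild restrictive hypotheses'' of the abstract should enter.
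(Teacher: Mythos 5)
Your second alternative in step (b) is aimed in the right direction, but the proposal leaves out the two specific ingredients that make the argument work in the paper. The crucial enabling result is Emerton's $R=T$ theorem: under the stated hypotheses on $V_{\f}^p$ (being $p$-distinguished and not a twist of an extension of the cyclotomic character by the trivial character), one has $\mathcal{E}=X_{fs}$. Without this identification, the entire strategy of ``pulling back a local component and checking it meets $\mathcal{E}$'' cannot be completed, because a priori $\mathcal{E}\subsetneq X_{fs}$ and there is no reason a component of the Galois-theoretic space should carry overconvergent eigenforms. You flag this issue as ``the technical heart,'' correctly, but do not fill it; the paper fills it exactly with Emerton's result. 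Second, the actual dimension count is done not by ``flatness'' but by the presentation of the global framed deformation ring over the completed tensor product of local framed rings (Propositions 13--14), giving
$$\hat{\mathcal{O}}_{X_{fs}^{\Box, N=0}, (x,\lambda)} \cong \bigl(R_{V_x^p}^{\Box,h}\hat{\otimes}_E R_{V_x^l}^{\Box,N=0}\bigr)[[x_1,\dots,x_r]]/(f_1,\dots,f_{r+2}),$$
with $R_{V_x^p}^{\Box,h}$ of dimension $6$ (trianguline/$h$-deformation theory at $p$) and $R_{V_x^l}^{\Box,N=0}$ of dimension $4$ (Theorem~2 applied to the $N=0$ locus), yielding a component of $X_{fs}^{N=0}$ of dimension at least one after un-framing; the auxiliary place in $S$ is needed precisely so that Proposition~14 applies. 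This is a cleaner local input than the inertial-type stratification you invoke: one only needs to know that $\xflbn$ is formally smooth of dimension $d^2$, not to analyse which inertial types occur.

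Your first alternative in (b) does not work as stated: the point $x$ lies on a generically special component with $N_x=0$, so $\pi(\rho_{x,l},N_x)$ is one-dimensional and $x$ cannot be a classical point. There is therefore no classical specialisation of $x$ to which to apply Ribet's level lowering, and the infinite-fern density of classical points around $x$ does not by itself produce a new irreducible component through $x$ of the required type; density of classical points is only used at the final step (as in the paper), once the $N\equiv 0$ component through $x$ has already been constructed by deformation theory, to identify it as generically principal series. Your step (a) also asks for more than is needed: one does not have to produce a separate ``principal series component'' of $\xflb$; it suffices that $\xflbn$ (a union of smooth irreducible components of $\xflb$ of the full dimension $d^2$) passes through the image of $x$, which is immediate from $N_x=0$, and the principal-series genericity is deduced afterwards from local--global compatibility at classical points.
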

\begin{conjlr}
Let $\mathcal{Z}\subset \mathcal{E}$ be a generically principal series (at $l$) component such that there exists $x\in \mathcal{Z}$ such that $\pi(\rho_{x,l}, N_{x})$ is one dimensional.   Then there exists $ \mathcal{Z}'\subset \mathcal{E}$, a generically special irreducible component, such that $x \in\mathcal{Z}'$. 
\end{conjlr}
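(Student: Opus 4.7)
The plan is to reduce the statement to a local question at $l$ about the universal framed deformation space $\xflb$ of $V^l_{\mathbb{F}} := \vf|_{\gql}$, and then to lift irreducible components from this local picture back to $\mathcal{E}$ via the restriction map at $l$.

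First I would identify the possible local-at-$l$ Weil--Deligne type at $x$: the condition that $\pi(\rho_{x,l}, N_x)$ be one-dimensional forces $N_x = 0$ together with $\alpha/\beta = l^{\pm 1}$, where $\alpha, \beta$ are the Frobenius eigenvalues of $\rho_{x,l}$. Because $\mathcal{Z}$ is generically principal series, monodromy vanishes on a dense open of $\mathcal{Z}$, so $\mathcal{Z}$ is carried into $\xflbn$ by the restriction morphism $\mathcal{E} \to \xflb$.

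Second I would analyse the structure of $\xflb$ in a neighbourhood of the image $y$ of $x$. Using a Kisin-style analysis of the local Galois deformation space for $\gql$-representations --- specifically, the stratification by Weil--Deligne type --- one shows that near $y$ the space $\xflb$ has at least two irreducible components: $\xflbn$, generically parametrising principal-series deformations with $N=0$; and $\xflbsp$, whose generic points carry a non-zero monodromy operator of Steinberg type. These two components meet precisely along the Frobenius-twist locus $\alpha/\beta = l^{\pm 1}$ inside the $N=0$ stratum, which is exactly where $y$ lies.

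Third, combining this local picture with the global-to-local description of $\mathcal{E}$ recalled in \S 5 (relating the completed local ring of $\mathcal{E}$ at $x$ to the global framed deformation ring $\rbfs$), the problem reduces to checking that the map $\mathcal{E} \to \xflb$ is sufficiently well-behaved at $x$ --- flat, or at any rate equidimensional with constant fibre dimension on local components --- so that each irreducible component of $\xflb$ through $y$ produces a corresponding irreducible component of $\mathcal{E}$ through $x$. Under this control, $\xflbn$ lifts back to $\mathcal{Z}$ itself, while $\xflbsp$ lifts to the sought generically special component $\mathcal{Z}'$.

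The main obstacle is this last lifting step: showing that the special local component actually supports non-infinitesimal points of $\mathcal{E}$, rather than contributing only a formal branch. This is the geometric analogue of the existence step in Ribet's classical level-raising theorem, and it is where the ``mild restrictive hypotheses'' of the abstract are presumably consumed --- likely in the form of absolute irreducibility of $\vf$, a non-degeneracy assumption at $p$, and a complete-intersection-type control of $\rbfs$ over the product of its local deformation factors, guaranteeing that components of the local-at-$l$ deformation space faithfully propagate to components of the eigencurve.
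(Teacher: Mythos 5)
Your proposal correctly identifies the local condition at $l$ ($N_x = 0$ together with $\alpha/\beta = l^{\pm 1}$) and correctly senses that the mechanism must be some control of the global deformation ring over its local factors. But the central step you propose --- that $\mathcal{E} \to \xflb$ be ``flat, or equidimensional with constant fibre dimension,'' so that components of $\xflb$ through $y$ lift to components of $\mathcal{E}$ through $x$ --- does not hold and is not the mechanism the paper uses. The eigencurve is one-dimensional while $\xflb$ is four-dimensional, so the restriction map is certainly not flat, and there is no reason for components downstairs to pull back cleanly to components upstairs. Component-lifting along the restriction map is the wrong model for how local information at $l$ constrains the global geometry.

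What the paper actually does is quite different in structure, and its two key ingredients are essentially absent from your proposal. First, it replaces $\mathcal{E}$ by Kisin's Galois-theoretic space $X_{fs}$ via Emerton's $R=T$ theorem (Theorem [Emerton]), so that the completed local ring at $(x,\lambda)$ can be computed by pure deformation theory. Second --- and this is the part you do not address --- the deformation condition at $p$ enters crucially: the completed local ring at a point of $X_{fs}$ pro-represents a fibre product involving Kisin's $h$-deformation functor (equivalently the trianguline deformation functor), whose framed version $R^{\Box,h}_{V_x^p}$ is formally smooth of dimension $6$ by Corollary 1. The actual level-raising argument is then a dimension count: by Proposition 14 (the characteristic-zero analogue of Kisin's presentation of global framed deformation rings over local ones, which is the ``complete-intersection-type control'' you gesture at), one gets
$$\hat{\mathcal{O}}_{X_{fs}^{\Box, sp}, (x, \lambda)} \cong (R_{V_x^p}^{\Box, h} \hat{\otimes}_E R_{V_{x}^l}^{\Box, sp})[[x_1, \cdots, x_r]] / (f_1, \cdots, f_{r+2}),$$
and since $R^{\Box,h}_{V_x^p}$ has dimension $6$ and $R^{\Box,sp}_{V_x^l}$ has dimension $4$ (Theorem 3), the framed special locus has dimension at least $8$; subtracting the relative framing dimension $7$ forces $X_{fs}^{sp}$ to have a component of dimension at least $1$ through $x$. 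So the existence of the special component $\mathcal{Z}'$ is won by an Euler-characteristic / dimension-count argument against the global presentation, not by any geometric lifting of local components along the restriction map. You would need to replace your third step entirely by this presentation-plus-dimension-count argument, and you would need to bring in the trianguline deformation theory at $p$, without which the count does not close.
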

\noindent
We prove these conjectures under mild technical restrictions on $\vf$ and $x$.  
\\ \\
Let $V_x$ be the $2$-dimensional $p$-adic representation associated to $x$.  Let $V_x^p$ denote its restriction to a decomposition group at $p$.   We say that $x\in \mathcal{E}$ satisfies $(\star)$ if
\begin{enumerate}
\item $V_x$ and $V_x^p$ are absolutely irreducible.
\item The generalised Hodge-Tate weights of $V_x^p$ are $0$ and $k \notin -\mathbb{N}\cup\{0\}$
\end{enumerate}
Our main result is the following:
\begin{thma} 
Let $S$ be a finite set of places of $\q$ such that $\{l,p, \infty\}\subset S$ and $S \setminus \{l,p, \infty\} \neq \emptyset $.  Let $\vf$ be a 2-dimensional, mod $p$,  modular representation of $\gqs$ such that $end_{\f[G_{\mathbb{Q}, S}]}\vf = \f$.  Furthermore assume that $V_{\f}^p$ is $p$-distinguished and not a twist of an extension of the cyclotomic character by the trivial character.   Let $\mathcal{E}$ denote the  cuspidal eigencurve lying over $\vf$.  Let $x \in \mathcal{E}$ such that 
\begin{enumerate}
\item $V_x$  satisfies $(\star)$.
\item $\pi(\rho_{x,l}, N_{x})$ is one dimensional.

\end{enumerate}
Then there exist irreducible components $\mathcal{Z}, \mathcal{Z}'\subset \mathcal{E}$ generically special and principal series respectively such that $x\in \mathcal{Z}\cap\mathcal{Z}'$.
\end{thma}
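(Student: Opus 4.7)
The strategy is to reduce the theorem to the local geometry at $l$. First I would analyze the framed local deformation ring $\xflb$ at the image point of $x$. Restricted to the appropriate inertia-type stratum, this ring has two irreducible components: $\xflbsp$, generically special, and $\xflbn$, generically principal series. They are formally smooth of the expected dimension at their generic points, and a direct Weil--Deligne calculation (equivalently, Shotton's explicit description of $R^{\Box}$ at $l\neq p$) shows that their scheme-theoretic intersection is precisely the locus where $(\rho,N)$ has $N=0$ together with the Satake ratio degenerating to $q_l^{\pm 1}$ --- i.e., exactly the locus on which $\pi(\rho,N)$ is one-dimensional. By hypothesis (ii) of Theorem A, the image of $x$ in $\xflb$ lies on this intersection.

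The second ingredient concerns the $p$-adic local structure. The hypotheses that $V_x^p$ is absolutely irreducible with Hodge--Tate weights $\{0,k\}$, $k\notin -\mathbb{N}\cup\{0\}$, together with $V_{\f}^p$ being $p$-distinguished and not a twist of an extension of the cyclotomic character by the trivial character, place us in the clean case of Kisin's trianguline deformation theory at $p$: the trianguline locus in the framed deformation space at $p$ is smooth at the point determined by $x$, and the constraint it imposes on infinitesimal deformations is a single linear condition which cuts $\mathcal{E}$ out of $\x\times\mathbb{G}_m$ as a smooth one-dimensional subspace near $x$.

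The central step is to prove that the natural morphism $\mathcal{E}\to\xflb$ is sufficiently smooth at $x$ that both local components pull back to nonempty unions of irreducible components of $\mathcal{E}$ through $x$. This reduces to a global Galois cohomology calculation. Combining Poitou--Tate duality with the absolute irreducibility of $V_x$, the control of local obstructions at $p$ coming from the preceding paragraph, and the presence of an auxiliary prime guaranteed by $S\setminus\{l,p,\infty\}\neq\emptyset$ (which provides unobstructed deformation room, in the spirit of Taylor--Wiles), one shows that the tangent map of $\mathcal{E}$ at $x$ surjects onto the tangent space of $\xflb$ at the image point, modulo the one-dimensional trianguline condition at $p$. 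Pulling back the local component decomposition of Step~1 then yields distinct global irreducible components $\mathcal{Z},\mathcal{Z}'\subset\mathcal{E}$ through $x$, mapping dominantly to $\xflbsp$ and $\xflbn$ respectively, which are the required $\mathcal{Z}$ and $\mathcal{Z}'$.

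The main obstacle will be this global-to-local smoothness. The trianguline condition that cuts out $\mathcal{E}$ is not a straightforward closed condition on $\x$ but a nontrivial analytic refinement, and one must check that it does not accidentally identify or merge the two local components at $l$ under pullback. The auxiliary prime supplied by $S\setminus\{l,p,\infty\}\neq\emptyset$ furnishes exactly the cohomological slack needed to decouple the $p$-local and $l$-local constraints; without it, the level-raising half of the argument appears to fail, which explains why this hypothesis is built into the statement of Theorem A.
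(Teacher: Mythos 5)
Your high-level intuition is in the right spirit — reduce to the local structure at $l$, exploit the two local strata $\xflbsp$ and $\xflbn$ whose intersection is exactly the locus where $\pi(\rho,N)$ is one-dimensional, use trianguline deformation theory at $p$, and lean on the auxiliary prime from $S\setminus\{l,p,\infty\}\neq\emptyset$ — but the proposal has several genuine gaps that prevent it from becoming a proof.

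\textbf{The smoothness claim is false.} You assert that the trianguline condition at $p$ "cuts $\mathcal{E}$ out of $\x\times\mathbb{G}_m$ as a smooth one-dimensional subspace near $x$." This directly contradicts the conclusion: the theorem says $x$ lies on two distinct irreducible components $\mathcal{Z}$ and $\mathcal{Z}'$, so $\mathcal{E}$ is singular at $x$. What the paper actually uses (Prop.~12, Cor.~1) is that the \emph{framed trianguline} deformation ring $R^{\Box,h}_{V_x^p}$ is smooth of dimension $6$; this is a statement purely about the $p$-local deformation, not about $\mathcal{E}$ itself.

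\textbf{Your central step is formulated wrongly and is not what is needed.} You want to show that the tangent map of $\mathcal{E}$ at $x$ surjects onto the tangent space of $\xflb$ "modulo the trianguline condition." The paper does not prove anything of this shape, nor is it the correct thing to prove. Instead, the argument runs through Kisin's presentation technique (Prop.~13, 14): the global framed deformation ring $\rbfs$ is presented over the completed tensor of the local rings at $l$, $p$ in $r$ variables with at most $r+2$ relations, the $2$ coming from $H^0(G_\infty, adV_x)$ since $V_x$ is odd. Combined with smoothness of dimension $6$ at $p$ and dimension $4$ for each of $\xflbsp$ and $\xflbn$ at $l$, this gives the dimension lower bound $\dim\hat{\mathcal{O}}_{X_{fs}^{\Box,\ast},(x,\lambda)} \geq 6+4+r-(r+2)=8$, hence $\dim\hat{\mathcal{O}}_{X_{fs}^{\ast},(x,\lambda)}\geq 1$ for $\ast\in\{sp, N=0\}$. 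That is what forces $x$ to lie on positive-dimensional components of each locus; tangent-map surjectivity is neither established nor used. Note also that your appeal to an "auxiliary Taylor--Wiles prime" has the wrong flavor: in the paper the extra place in $S$ is used solely to guarantee injectivity of the map $\theta$ in Prop.~14, so that Kisin's presentation applies with exactly $r+s$ relations.

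\textbf{You have omitted the $R=T$ input.} The entire deformation-theoretic computation takes place on Kisin's Galois-theoretic space $X_{fs}\subset\x\times\mathbb{G}_m$, not on $\mathcal{E}$ directly. Passing from the one to the other requires Emerton's theorem that $\mathcal{E}=X_{fs}$ under the stated hypotheses on $V_{\f}^p$ ($p$-distinguished, not a twist of an extension of the cyclotomic character by the trivial character). Without that identification, the local-at-$l$ dimension computation tells you about components of $X_{fs}$, not of the eigencurve.

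\textbf{You have omitted the non-crystalline check.} The trianguline machinery (Prop.~12, Cor.~1, and the presentation in Prop.~15) requires $V_x^p$ to satisfy $(\ddagger)$, which in particular includes being non-crystalline. Your hypotheses only give $(\star)$, which says nothing about crystallinity. The paper closes this gap: since $\pi(\rho_{x,l},N_x)$ is one-dimensional, $x$ is not classical, hence by Kisin's Fontaine--Mazur theorem $V_x$ is not potentially semi-stable, hence $V_x^p$ is not crystalline, hence $V_x$ satisfies $(\ddagger)$. This deduction is a necessary bridge that your proposal skips.
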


We remark that $S$ must contain an auxillary place purely for technical reasons (see proposition 14). 

Let us outline the basic strategy of the proof.   In \cite{Kis4} it is proven that for any $x \in \mathcal{E}$, $V_x^p$ satisfies the property of being trianguline (see \S 3.4).  Using this Kisin (\cite{Kis4}) defines the closed rigid analytic subspace space $X_{fs} \subset \x\times \mathbb{G}_m$  using purely Galois theoretic techniques.  One should think of $X_{fs}$ as a Galois theoretic avatar of $\mathcal{E}$.  In particular there is a canonical inclusion $\mathcal{E}\subset X_{fs}$.  In \cite{Kis4} it was conjectured that this inclusion is actually an equality, giving a purely Galois theoretic construction of $\mathcal{E}$.  By recent work of Emerton (\cite{EM}) this has been shown to be true after imposing the above restrictions on  $\vf$.  Philosophically speaking this is an $R=T$ theorem.  

This allows us to study the local geometry of $\mathcal{E}$ using $X_{fs}$.  The benefit of such an approach is that the local geometry of $X_{fs}$ can be understood using Galois deformation theory (Theorem 6 of \S 4.2).  In particular we may apply  the theory of trianguline deformations as developed in \cite{BC} to determine the local geometry of $X_{fs}$. Our strategy is to first prove the level raising and lowering conjectures for $X_{fs}$ using Galois deformation theory, then using Emerton's result, deduce them for $\mathcal{E}$.  

We  remark that this strategy is reminiscent of that employed by Gee in \cite{Gee}, where level raising ard lowering results are proven using deformation theory and then transported to the realm of automorphic representations using an $R=T$ theorem.  
We should also remark that certain cases of the level raising conjecture have already been established by work of Newton (\cite{New}), in the unramified case by directly generalising the methods of Diamond and Taylor in the classical case.  Our approach is fundamentally different.

The techniques developed in this paper are well suited to studying the local geometry of Galois theoretic eigenvarieties in the higher rank case, as being developed by Pottharst.    As $R=T$ theorems improve it seems reasonable to hope that these techniques will provide a means of understanding the local geometry of higher rank automorphic eigenvarieties.  

We also remark that the techniques developed in this paper may be used to prove similar geometric level raising and lowering results for the full space of $p$-adic modular forms. More precisely, one may prove a natural analogue of theorem $A$ for the rigid analytic space associated to the \text{big} Hecke algebra acting on the the full space of cuspidal $p$-adic modular forms, under mild restrictions on $\vf$.  The strategy in this case would be the same, first proving it for the universal deformation space associated to $\vf$ and then transferring it the setting of $p$-adic modular forms using results of Boeckle (\cite{BOC}).  In a sense, this situation is easier because one does not need to impose any condition on the local deformations at $p$.

\subsection*{Arrangement of the Paper}  In \S 2 we review the basic theory of deformation of representations of profinite groups.  In \S 3.1 we study local deformations away from $p$, refining results of Gee (\cite{Gee}) and Kisin (\cite{Kis5}). In \S 3.2 we review the theory of trianguline deformation theory developed in \cite{BC}. In particular we show that under favourable circumstances Kisin's $h$-deformation functor (see \S 8 of \cite{Kis4}) and the trianguline defomation functor are naturally isomorphic.  This allows us to \textit{transport} results from \cite{BC} to the setting of \cite{Kis4}.  In \S 4.1 we prove a characteristic zero analogue of Kisin's results  on presenting global deformation rings over local deformations in positive characteristic (\cite{Kis2}).  Finally in \S4.2 we bring these ideas together to prove the level lowering and raising conjectures for $X_{fs}$.  In \S5 we review the basic theory of the eigencurve and prove level raising and lowering in this context invoking Emerton's $R=T$ result.

\subsection*{Acknowledgments}
I would like to thank Kevin Buzzard and Mathew Emerton for the numerous and helpful conversations I have had with them.

\section{Deformations of Profinite Groups}

In this section we review the basic concepts of deformations of finite dimensional representations of profinite groups.  We also recall some of the techniques needed to study the local geometry of the universal deformation space of a finite characteristic representation in the spirit of \cite{Kis5} .
\\ \\
Let $p$ be a rational prime and $\mathbb{F}$ a finite field of characteristic $p$.  Let $G$ be a profinite group and $V_{\mathbb{F}}$ be a finite dimensional $\mathbb{F}$-vector  space equipped with a continuous action of $G$.  We write $dim_{\f}(\vf) =d$.  In practice $G$ will always be a Galois group of a number field or a local field.  

Let $\wf$ be the Witt vectors over $\f$ and $\mathfrak{AR}_{W(\mathbb{F})}$ be the category of finite local, Artinian $\wf$-algebras with residue field $\f$.  For $A \in ob(\arwf)$  we define a deformation of $\vf$ to $A$ to be a finite free $A$-module, $V_A$, equipped with a continuous $A$-linear action of $G$ and an isomorphism $V_A\otimes_A \f \cong \vf$.  An isomorphism between two such deformations is a $A[G]$-module isomorphism where the induced automorphism of $\vf$ is the identity.

Using this we define the functor $\mathcal{D}_{\vf}$, which assigns to any $A\in ob(\arwf)$ the set of isomorphism classes of deformations of $\vf$ to $A$.  We remark that this functor is naturally isomorphic to the usual deformation functor defined  in terms of strict equivalence classes of liftings of $\vf$.  
\\ \\
Let $\beta_{\f}$ be a basis for $\vf$.  If $A\in ob(\arwf)$ and $V_A$ is a deformation of $\vf$, then a \textit{framing} of $V_A$ with respect to $\beta_{\f}$ is a choice of $A$-basis lifting $\beta_{\f}$. We will be interested in deformations with multiple framings.  Let $\Sigma$ be a finite index set such that  for every $\vi\in \Sigma$ we have a fixed basis $\beta_{\vi}\subset \vf$.     We define the functor $\mathcal{D}^{\Box}_{\vf}$, which assigns to every $A\in ob(\arwf)$ the set of isomorphism classes of deformations of $\vf$ to $A$ together with a \textit{framing} of $\beta_{\vi}$ for each $\vi\in \Sigma$.  There is a forgetful morphism of functors $\mathcal{D}^{\Box}_{\vf} \rightarrow\mathcal{D}_{\vf}$.  It is clear that this morphism is formally smooth by construction.
\\ \\
\noindent
From now on let us make the technical restriction that  $Hom(G, \f_p)$ is a finite dimensional $\f_p$-vector space.  We then have the crucial result originally due to Mazur (\cite{MA}) and extended by Kisin in \cite{Kis5}:

\begin{prop}
\begin{enumerate}
\item $\mathcal{D}^{\Box}_{\vf}$ is pro-representable by a complete local Noetherian $\wf$-algebra $\rbf$ called the universal framed deformation ring.
\item  If $End_{\f[G]}\vf = \f$ then $\mathcal{D}_{\vf}$ is pro-represented by a complete local $\wf$-algebra $\rf$ called the universal deformation ring.
\end{enumerate}
\end{prop}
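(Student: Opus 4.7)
The plan is to verify Schlessinger's representability criteria. Recall that Schlessinger's theorem characterises pro-representability by a complete local Noetherian $\wf$-algebra of a functor $\mathcal{D}: \arwf \to \mathrm{Sets}$ (with $\mathcal{D}(\f)$ a singleton) in terms of the natural map
$$\alpha: \mathcal{D}(A' \times_A A'') \longrightarrow \mathcal{D}(A') \times_{\mathcal{D}(A)} \mathcal{D}(A'')$$
attached to a small surjection $A'' \to A$: one requires (H1) $\alpha$ surjective; (H2) $\alpha$ bijective in the tangent-space situation $A = \f$, $A'' = \f[\epsilon]$; (H4) $\alpha$ bijective when $A' = A''$ is itself a small extension of $A$; and (H3) the tangent space $\mathcal{D}(\f[\epsilon])$ finite-dimensional over $\f$.

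For part (1), the essential feature is that a framed deformation admits no nontrivial automorphism preserving each of the prescribed bases $\beta_{\vi}$. This rigidity makes the assembly of a compatible pair of framed deformations over $A'$ and $A''$ into a framed deformation over $A' \times_A A''$ both possible and unique, so $\alpha$ is a bijection for arbitrary fibre diagrams in $\arwf$; this yields (H1), (H2) and (H4) at once. For the tangent space, an unwinding of definitions exhibits $\mathcal{D}^{\Box}_{\vf}(\f[\epsilon])$ as an extension of $|\Sigma|-1$ copies of $\mathrm{Mat}_d(\f)$, parametrising changes of the multiple framings, by the continuous cocycles $Z^1(G, \mathrm{ad}(\vf))$. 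Finite-dimensionality of the latter is Mazur's classical $\Phi_p$-finiteness lemma: the assumption that $\mathrm{Hom}(G, \f_p)$ is finite-dimensional is equivalent to $H^1(G, M)$ being finite for every finite discrete $p$-primary $G$-module $M$, from which the required bound on $Z^1(G, \mathrm{ad}(\vf))$ follows.

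For part (2), the hypothesis $\mathrm{End}_{\f[G]}\vf = \f$ is used to establish (H4). A standard inductive lifting argument shows that this identity propagates to $\mathrm{End}_{A[G]} V_A = A$ along any small surjection in $\arwf$, so every automorphism of a deformation over $A$ is a scalar and therefore acts trivially on isomorphism classes; this eliminates the only possible source of non-injectivity of $\alpha$. Conditions (H1) and (H2) follow from the gluing of modules without further input. Finally, (H3) is inherited from the framed case via the forgetful morphism $\mathcal{D}^{\Box}_{\vf} \to \mathcal{D}_{\vf}$, which is formally smooth with finite-dimensional fibres, so that $\mathcal{D}_{\vf}(\f[\epsilon]) \cong H^1(G, \mathrm{ad}(\vf))$ is finite-dimensional.

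The main obstacle is (H3): finite-dimensionality of the tangent cohomology. This is precisely where the blanket assumption that $\mathrm{Hom}(G, \f_p)$ is finite-dimensional enters, and without it the deformation rings would only be complete local rather than Noetherian. In the Galois-theoretic applications that occupy the rest of the paper the required finiteness is a consequence of Tate's local duality and, in the global case, of the standard finiteness theorems for the cohomology of finite $\gqs$-modules, so the hypothesis is satisfied in every situation of interest.
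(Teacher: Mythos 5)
The paper gives no proof of this proposition, citing Mazur and Kisin; your argument is a correct reconstruction of the standard proof via Schlessinger's criteria that those references supply. The one small imprecision is describing $\mathcal{D}^{\Box}_{\vf}(\f[\epsilon])$ as an ``extension'' of $(|\Sigma|-1)$ copies of $\mathrm{Mat}_d(\f)$ by $Z^1(G,\mathrm{ad}\vf)$ --- after fixing a distinguished framing it is more naturally a direct sum --- but the dimension count $\dim Z^1 + (|\Sigma|-1)d^2$ agrees with Lemma 1(2), so the verification of (H3) goes through.
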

The primary motivation for introducing the concept of a \textit{framed} deformation is to ensure representability under \textit{any} circumstances.
\\ \\
\noindent
Let $\f[\epsilon] = \f[X]/X^2$ and $ad\vf$ denote the dual numbers and the adjoint representation respectively.  The following well known lemma (\cite{MA}, \cite{Kis5}) expresses the relationship between the tangent spaces of these two functors.

\begin{lem}
\begin{enumerate}
\item There is a canonical isomorphism of finite dimensional $\f$-vector spaces:
$$\mathcal{D}_{\vf}(\f[\epsilon]) \cong H^1(G, ad\vf).$$
\item $\mathcal{D}^{\Box}_{\vf}(\f[\epsilon])$ is a finite dimensional $\f$-vector space satisfying:
$$dim_{\f}\mathcal{D}^{\Box}_{\vf}(\f[\epsilon]) = dim_{\f}\mathcal{D}_{\vf}(\f[\epsilon] +|\Sigma|d^2 - dim_{\f}(ad\vf)^G.$$
\item $\mathcal{D}_{\vf}$ (and thus $\mathcal{D}^{\Box}_{\vf}$) is formally smooth over $\wf$ if and only if $H^2(G, ad\vf)=0$.
\end{enumerate}

\end{lem}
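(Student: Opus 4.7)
The plan is to prove the three parts in order, with part (1) being the classical Mazur calculation that the other two build on.

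For part (1), I would realize a deformation $V \in \mathcal{D}_{\vf}(\f[\epsilon])$ concretely: pick any $A$-basis of $V$ lifting a chosen basis $\beta_\f$ of $\vf$, and write the $G$-action as $\tilde{\rho}(g) = (1+\epsilon c(g))\rho(g)$ for a unique function $c \colon G \to \mathrm{End}(\vf) = ad\,\vf$. The multiplicativity $\tilde{\rho}(gh) = \tilde{\rho}(g)\tilde{\rho}(h)$ together with $\epsilon^2 = 0$ unwinds to the cocycle identity $c(gh) = c(g) + g\cdot c(h)$, where $G$ acts by conjugation on $ad\,\vf$. A change of basis $1+\epsilon X$ alters $c$ by the coboundary $g \mapsto X - g\cdot X$. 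Thus $c \mapsto [c]$ gives a well-defined map to $H^1(G, ad\,\vf)$, and reversing the construction shows it is a bijection; functoriality in $\f[\epsilon]$-algebra maps yields the asserted $\f$-linear isomorphism.

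For part (2), I would use the forgetful morphism $\mathcal{D}^{\Box}_{\vf} \to \mathcal{D}_{\vf}$, which is formally smooth, and compute the dimension of the fiber over a given deformation $V \in \mathcal{D}_{\vf}(\f[\epsilon])$. For each $\vi \in \Sigma$ the set of bases of $V$ lifting $\beta_\vi$ is a torsor under $1+\epsilon M_d(\f)$, contributing $|\Sigma|d^2$ dimensions in total. The equivalence on framed deformations identifies two framings whenever they differ by an automorphism of $V$ as a deformation; these automorphisms form $1+\epsilon(ad\,\vf)^G$, acting diagonally and freely on the tuples of framings. The resulting fiber dimension is $|\Sigma|d^2 - \dim_\f(ad\,\vf)^G$, which combined with smoothness of the forgetful map yields the asserted formula.

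For part (3), I would run standard obstruction theory. Given a small extension $0 \to I \to A' \to A \to 0$ in $\arwf$ and a deformation $V_A$, choose any continuous set-theoretic lift $\tilde{\rho} \colon G \to GL_d(A')$ of the $G$-action on $V_A$; the failure of multiplicativity $\tilde{\rho}(g)\tilde{\rho}(h)\tilde{\rho}(gh)^{-1} - 1$ lands in $M_d(\f)\otimes_\f I$ and defines a 2-cocycle whose class $o(V_A) \in H^2(G, ad\,\vf)\otimes_\f I$ is the obstruction to lifting $V_A$ to $A'$; it vanishes iff the adjustment needed to make $\tilde{\rho}$ a homomorphism exists. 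So $H^2(G, ad\,\vf) = 0$ forces formal smoothness. Conversely, one produces a non-trivial obstruction from any nonzero class in $H^2$ by standard manipulation (e.g.\ passing to the universal framed deformation ring, whose presentation over $\wf$ has number of generators controlled by $\dim H^1$ and number of relations bounded above by $\dim H^2$, and noting that smoothness forces the relation ideal to vanish, hence $H^2 = 0$).

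The main subtleties I would watch for are mostly bookkeeping: in part (2) the action of $(ad\,\vf)^G$ on framings is free only because $\Sigma$ is non-empty (so the diagonal embedding is injective), and in part (3) the converse direction is cleanest if one first works with the framed functor, which is always representable by Proposition 1, and only then descends via the formally smooth forgetful morphism from part (2). None of these steps is deep --- the content is Mazur's original computation, repackaged to accommodate the $|\Sigma|$-tuple of framings as in Kisin's setup.
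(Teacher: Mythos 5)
Parts (1) and (2) are correct and are exactly the standard arguments (the paper itself offers no proof, just citations to Mazur and Kisin, so there is nothing to compare against other than the canonical treatment you reproduce). In (2) your torsor count and the freeness observation about the action of $1+\epsilon(ad\vf)^G$ on the tuple of framings are right, and the linearity of the forgetful map on $\f[\epsilon]$-points gives the dimension formula; you don't even need full formal smoothness here.

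The gap is in the converse of (3). Your parenthetical argument --- ``the number of relations is bounded above by $\dim H^2$, and smoothness forces the relation ideal to vanish, hence $H^2=0$'' --- is a non sequitur. The presentation of the (framed) universal deformation ring over $\wf$ has minimal number of relations \emph{at most} $\dim_{\f} H^2(G,ad\vf)$, but this inequality need not be an equality: the relation ideal can be zero while $H^2$ is nonzero. So knowing the ring is a power series ring only tells you the number of relations is zero, not that $\dim H^2 = 0$. If you want the implication ``formally smooth $\Rightarrow H^2 = 0$'' you must actually exhibit, from any nonzero class in $H^2(G,ad\vf)$, a small extension in $\arwf$ and a deformation whose obstruction class is nonzero, and it is not clear this can be done in general (the natural test extensions, like $\f[\epsilon']/(\epsilon')^3 \to \f[\epsilon]$, produce obstructions that are \emph{cup products} of $H^1$ classes, which can all vanish even when $H^2 \neq 0$). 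In fact only the forward direction of (3) is invoked later in the paper (Theorems 1 and 2 establish $H^2 = 0$ and conclude smoothness), so the ``only if'' is both unproved by your sketch and unnecessary downstream; if you want to prove the lemma exactly as stated you need either to supply the missing construction or to add a hypothesis under which obstruction classes span $H^2$.
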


\noindent
Frequently  the theory of deformations of finite characteristic representation of profinite groups is set up over some auxiliary coefficient-ring $\mathcal{O}$.  Recall that this means that $\mathcal{O}$ is  a complete local noetherian algebra with residue field $\f$.  Note that such a ring is canonically a $\wf$-alebgra. All of the above theory can be set up in this more restricted setting and all above results hold.  In particular the universal framed deformation in the case of $\mathcal{O}$-algebra framed deformations is given by $\rbf \hat{\otimes}_{\wf} \mathcal{O}$.  In this sense, our setup is the most general.
\\ \\ 
Let $E/\qp$ be a finite extension and $\are$ be the category of local, Artinian $E$-alegbras with residue field $E$.  Similarly let $V_E$ be a finite dimensional $E$-vector space equipped with a continuous $E$-linear action of $G$.  We define the deformation functor,  $\mathcal{D}_{V_E}$, on $\are$ in exactly the same way as above.  Similarly if $\Sigma$ is a finite index set and $\beta_{\vi}\subset V_E$ is a basis for each $\vi \in \Sigma$, we define the framed deformation functor $\mathcal{D}^{\Box}_{V_E}$ in exactly the same way as above.    All of the above results carry over to this setting replacing $\f$ by $E$.  
\\ \\
There is an elegant geometric interpretation linking deformations from characteristic $p$ and characteristic zero originally developed in \S2.3 of \cite{Kis5} and \S 9 of \cite{Kis4}.

Keeping the above notation, let $\mathfrak{X}^{\Box}_{\vf}$ be the generic fibre of the formal scheme $Spf(\rbf)$ as explained in \cite{DJ}.  $\mathfrak{X}^{\Box}_{\vf}$ is a separated rigid space over $\wf[1/p]$. By lemma 7.1.9 of \cite{DJ} the points of $\xf$ are in natural bijection with the maximal spectra of $\rbf[1/p]$.
Furthermore, if $x \in \xf$ corresponds to the maximal ideal $m \subset \rbf[1/p]$ then there is a canonical isomorphism of complete local noetherian rings 
$$\widehat{\mathcal{O}}_{\xf, x} \cong \widehat {  \rbf[1/p]_m}.$$

In particular they must have the same residue field, which is automatically a finite extension $E/ \qp$.  The canonical  morphism  $\rbf \rightarrow \widehat {  \rbf[1/p]_m}$ composed with reduction modulo the maximal ideal induces a continuous  representation of $G$ on a finite dimensional $E$-vector space $V_E$.  Furthermore, $V_E$ comes equipped with a choice of basis for each $\vi\in \Sigma$.    We can now apply the above deformation theoretic techniques to $V_E$ to give the universal framed deformation ring $\rbe$.  By lemma 2.3.2, lemma 2.3.3 and proposition 2.3.5 of \cite{Kis5} we know that 
 
 $$\rbe \cong \widehat {  \rbf[1/p]_m}.$$
Hence we may study the infinitesimal geomtry of $\xf$ by using deformation theoretic techniques. 
\\ \\
\noindent 
If $End_{\f[G]}\vf = \f$ then $\mathcal{D}_{\vf}$ is pro-representable.  Hence  there exists an associated universal deformation space $\mathfrak{X}_{\vf}$. All of the above observations remain true in this situation replacing all framed deformation functors with their unframed counterparts.  

\section{Local Deformations}

\subsection{Local Deformations away from $p$}
Our eventual aim is to study deformations of global Galois groups on $p$-adic vector spaces with prescribed behaviour at some prime $l$ not equal to $p$.  With this in mind we begin by studying  the structure of local deformation spaces associated to representations of $l$-adic Galois groups acting on finite dimensional vector spaces over a finite field $\mathbb{F}$  of characteristic $p$ where $l \neq p$.  Such deformation spaces were extensively studied in \S 2 of \cite{Gee}, adapting the methods developed in \S 3 of \cite{Kis1}. 
\subsubsection{Local Level Lowering Deformations}
We wish to study the part of the local deformation space on which monodromy vanishes. This is the geometric analogue of studying the potentially crystalline locus of the universal deformation space when $l=p$. This is a natural geometric analogue of classical level lowering, albeit more refined as it is more general than being unramified.  
\\ \\
\noindent
Let $K/\mathbb{Q}_l$ be a finite extension with residue field of cardinality $l^m$.  Fix an algebraic closure $\bar{K}$ and let  $G_K$ be the absolute Galois group, together with its natural profinite topology.  By local class field theory we know that $G_K$ satisfies the $p$-finiteness condition of \S2.
Let $V_{\mathbb{F}}$ be a $d$-dimensional $\f$-vector space equipped with a basis $\beta_{\mathbb{F}}$  and a continuous $\f$-linear action of $G_K$.  Let $\rbf$ be the complete local Noetherian $\wf$-algebra pro-representing $\mathcal{D}^{\Box}_{\vf}$ (with one framing) . Let $\xf$ denote the universal deformation space. 
\\ \\
\noindent
To $x\in \xf$ with residue field $E/\qp$ we may naturally associate a $d$-dimensional $E$-vector space $V_x$, equipped with a fixed basis and a continuous $E$-linear action of $G_K$.   

Let $I_K \subset W_K$ be the inertia subgroup and Weil group respectively.  Fix $\Phi\in W_K$, a lift of (arithmetic) Frobenius.  Let $||.||:W_K \rightarrow \mathbb{Q}^*$ be the unramified character which sends $\Phi$ to $l^m$.

A well known theorem of Grothendieck (\cite{ST}) states that there is a finite extension of $L/K$ such that $V_x$ restricted to ${I_L}$ is unipotent.  This allows us to associate to $V_x$, a $d$-dimensional Weil-Deligne representation of $W_K$ over $E$.  Such an object is a triple $(\Delta, \rho_0, N)$ where $\Delta$ is a $d$-dimensional $E$-vector space; $\rho_0 : W_K \rightarrow Aut_E(\Delta)$ is a representation whose kernel contains an open subgroup of $I_K$ and $N \in Aut_E(\Delta)$ is a nilpotent endomorphism satisfying 
$$\rho_0(\sigma)N = ||\sigma||N\rho_0(\sigma) \;\; \forall \sigma\in W_K.$$
 $N$ is  the monodromy operator associated to $V_x$ .  It is a measure of how much the initial representation of $G_K$  fails to be continuous for the discrete topology on $V_x$. In particular, if $N=0$ then $(\Delta, \rho_0)$ is isomorphic to $V_x$ as a $W_K$ representation. 
\noindent
There is a natural concept of a  Weil-Deligne representation over an arbitrary $\mathbb{Q}_p$-algebra.  There is also a natural extension of Grothendieck's result in the following sense:  
\\ \\
\noindent
As above let $\f$ be a finite field of characteristic $p$.   Let us denote by $\widehat{\arwf}$, the category of complete local Noetherian $\wf$-algebras with residue field $\f$. Let $A_o \in ob( \widehat{\arwf})  $ and $V_{A_o}$ be a finite free $A_o$-module of rank $d$ together with an  $A_o$-basis $\beta_{A_o}$ and a continuous $A_o$-linear action of $G_K$.  Write $A=A_o[1/p]$.  $V_A := V_{A_o} \otimes A$ naturally comes equipped with a continuous $A$-linear action of $G_K$ and a canonical $A$-basis  $\beta_A$, lifting $\beta_{A_o}$.  If we denote the generic fibre of $Spf(A_{o})$ by $\mathfrak{X}$ then as above,  $Specmax(A)$ is in natural bijection with $\mathfrak{X}$.   We have the following mild generalization of Grothendieck's result:

\begin{prop}
To $V_A$ we may naturally associate a Weil-Deligne representation over $A$ with the property that specializing to $x \in Specmax(A)$ recovers the classical construction.
\end{prop}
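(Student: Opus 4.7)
The plan is to adapt Grothendieck's classical $\ell$-adic monodromy argument to the family setting, producing the Weil--Deligne datum $(V_A, \rho_0, N)$ by the same formulas that work pointwise, but interpreted first over the integral structure $A_o$ and then scalar-extended to $A = A_o[1/p]$. Compatibility with specialisation will be manifest from the construction.

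First I would locate a finite Galois extension $L/K$ such that $\rho(I_L)$ lies in the open subgroup $1 + \mathfrak{m}_{A_o}M_d(A_o) \subset GL_d(A_o)$; such an $L$ exists because the residual representation on $V_{\mathbb{F}}$ is continuous with finite image on inertia.  The structural point is that $1 + \mathfrak{m}_{A_o}M_d(A_o)$ is pro-$p$ (each Artinian quotient being an iterated extension of additive $\mathbb{F}$-vector spaces) whereas wild inertia $P_L$ is pro-$\ell$ with $\ell\neq p$; hence $\rho|_{P_L}$ is trivial, and $\rho|_{I_L}$ factors through the maximal pro-$p$ tame quotient of $I_L$, which is free of rank one over $\mathbb{Z}_p$.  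Fix a topological generator $\tau$ of this pro-$p$ quotient and let $t_p:I_L \rightarrow \mathbb{Z}_p$ denote the canonical surjection.

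Next I would set
\[
N \;:=\; \frac{1}{t_p(\tau)}\,\log\bigl(\rho(\tau)\bigr) \;\in\; \mathrm{End}_A(V_A),
\]
with the logarithm converging in $M_d(A_o)[1/p]$ because $\rho(\tau)-1 \in \mathfrak{m}_{A_o}M_d(A_o)$ is topologically nilpotent.  Extending $t_p$ to $W_K$ in the standard way, define $\rho_0(\sigma) := \rho(\sigma)\exp(-t_p(\sigma)N)$.  The classical tame braiding relation $\sigma\tau\sigma^{-1} = \tau^{||\sigma||}$ on the pro-$p$ tame quotient, for $\sigma \in W_K$, then yields by direct computation both the required identity $\rho_0(\sigma)N\rho_0(\sigma)^{-1} = ||\sigma||N$ and the triviality of $\rho_0|_{I_L}$; in particular $\rho_0$ has open kernel on $I_K$.

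The step requiring genuine care -- and the main obstacle -- is establishing nilpotency of $N$ as an $A$-linear endomorphism of $V_A$.  Classical Grothendieck at each $x \in \mathrm{Specmax}(A)$ yields $N_x^d = 0$, so the entries of $N^d$ vanish at every closed point of the rigid-analytic generic fibre $\mathfrak{X}$.  By lemma 7.1.9 of \cite{DJ} these closed points are in canonical bijection with $\mathrm{Specmax}(A)$; combining a Jacobson-type density argument on $\mathfrak{X}$ with the integrality constraint $\rho(\tau^m) = \exp(m\,t_p(\tau)N) \in GL_d(A_o)$ for every $m \in \mathbb{Z}_p$ (which would rule out any nontrivial eigenvalue behaviour of $N$ at generic points) upgrades pointwise nilpotency to the uniform identity $N^d = 0$ in $M_d(A)$.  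With nilpotency established, $(V_A, \rho_0, N)$ is a Weil--Deligne representation over $A$, and because every formula in its construction is stable under base change, specialisation at $x \in \mathrm{Specmax}(A)$ recovers the classical Weil--Deligne representation associated to $V_x$.
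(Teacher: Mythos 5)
Your construction takes a genuinely different route from the paper, which disposes of the proposition in two lines by citing Proposition~19 of \cite{PAU} (which establishes the result over affinoid algebras), admissibly covering the generic fibre $\mathfrak{X}$ by affinoids, and gluing the resulting Weil--Deligne data. You instead carry out Grothendieck's argument directly over $A_o$: find $L$ with $\rho(I_L)\subset 1+\mathfrak{m}_{A_o}M_d(A_o)$, observe that this group is pro-$p$ while $P_L$ is pro-$\ell$, so $\rho|_{I_L}$ factors through the pro-$p$ tame quotient $\cong \mathbb{Z}_p$, then set $N=\tfrac{1}{t_p(\tau)}\log\rho(\tau)$ and twist by $\exp(-t_p(\sigma)N)$ to get $\rho_0$. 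Steps 1--5 of that outline are sound (modulo the usual care over the braiding relation on the tame quotient of $I_L$ versus $I_K$, and over the topology in which the logarithm series converges in $M_d(A_o)[1/p]$). What you buy with this approach is a self-contained, basis-free construction that makes the compatibility with specialisation transparent; the paper's approach is shorter but outsources all the work.

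However, the nilpotency step --- which you correctly identify as the crux --- has a genuine gap. A Jacobson-type density argument on $\mathfrak{X}$ only tells you that the entries of $N^d$ lie in the intersection of all maximal ideals of $A$, i.e.\ in the nilradical; since $A_o$ (and hence $A$) need not be reduced, this does \emph{not} give $N^d=0$. The appeal to the ``integrality constraint'' $\rho(\tau^m)=\exp(m\,t_p(\tau)N)\in GL_d(A_o)$ is not an argument: it is unclear what ``nontrivial eigenvalue behaviour at generic points'' means over a non-reduced ring, and no mechanism is offered for converting that constraint into the vanishing of the coefficients of $N^d$. The gap is nonetheless easily closed without any density or reducedness input: the relation $\rho(\sigma)N\rho(\sigma)^{-1}=\|\sigma\|N$ shows that $N$ and $qN$ (with $q=l^m$) are conjugate in $M_d(A)$, hence have the same characteristic polynomial; comparing coefficients gives $(1-q^{d-i})a_i=0$ for $i<d$, where $a_i$ are the coefficients of $\mathrm{charpoly}(N)$. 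Since $A=A_o[1/p]$ is a $W(\f)[1/p]$-algebra, every nonzero integer is already a unit in $A$, so $1-q^{d-i}\in A^\times$ and $a_i=0$. Then Cayley--Hamilton (valid over any commutative ring) gives $N^d=0$. You should replace the density/integrality paragraph with this argument.
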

\begin{proof}
By proposition 19 of \cite{PAU} we know the result is true for affinoid alegbras.  We may admissibly cover $\mathfrak{X}$ by affinoids. Glueing each of these Weil-Deligne representations gives the desired Weil-Deligne representation over $A$.
\end{proof}
We write $N_{V_A}$ for the monodromy  operator of this Weil-Deligne representation over $A$.  Because we have a fixed $A$-basis $\beta_A$ we have the canonical inclusion $N_{V_A} \in M_d(A)$.  Furthermore,  it is true by construction that,  $N_{V_A} \in M_d(A_o)$.  Let $I \subset A^o$ denote the ideal generated by the entries of $N_{V_A}$.  We write $A_o^{N=0} = A_o/I$ and let $V_{A_o^{N=0}}  = V_{A_o} \otimes A_o^{N=0}$ together with its natural  $G_K$-action and canonical $A_o^{N=0}$-basis.   We should observe that it is perfectly possible that $A_o^{N=0}$ is trivial.   By construction the induced action of ${I_K}$ on $V_{A_o^{N=0}}$ factors through a finite quotient.  
\\ \\
\noindent
Now set $A_o = \rbf$. The above construction gives a closed subspace $\xfn \subset \xf$.  $\xfn$ is the generic fibre of $Spf(\rbfn)$ and by construction, $x \in \xfn $ if and only if the monodromy operator associated to $V_x$ is trivial.   

Observe that the Weil-Deligne representation carried by $\xfn$ must factor through a finite quotient of $I_K$.  Hence there exists a finite extension $L/K$ such that this Weil-Deligne representation factors through the finite inertia group $I_{L/K}$. This implies that the action of inertia on $V_{\rbfn}$ must factor through $I_{L/K}$.  We also deduce that the action of inertia on $\vf$ must factor through $I_{L/K}$.  
\\ \\
\noindent
We define the functor $\mathcal{D}^{\Box, N=0}_{\vf}$ which assigns to every $A \in ob(\arwf)$ isomorphism classes of framed deformations of $\vf$ over $A$, whose restriction to $I_L$  factors though $I_{L/K}$.  $\mathcal{D}^{\Box, N=0}_{\vf}$ is a subfunctor of $\mathcal{D}^{\Box}_{\vf}$.

\begin{prop}
$\mathcal{D}^{\Box}_{\vf, N=0}$ is pro-represented by $\rbfn$.
\end{prop}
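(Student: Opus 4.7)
The plan is to identify, for each $A \in \arwf$, the set $\mathcal{D}^{\Box}_{\vf, N=0}(A)$ with the set of $\wf$-algebra maps $\rbf \to A$ that factor through the quotient $\rbfn = \rbf/I$, where $I$ is the ideal generated by the entries of the universal monodromy operator $N_{V_{\rbf}}$ (in the chosen framing). The key technical input is Proposition 2, which guarantees that the Weil--Deligne monodromy is functorial under base change of the coefficient ring.

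First I would verify the forward inclusion. Given $V_A \in \mathcal{D}^{\Box}_{\vf, N=0}(A)$, the action of $I_K$ on $V_A$ factors through the finite group $I_{L/K}$, so it is continuous for the discrete topology on $V_A$. Consequently the associated Weil--Deligne representation has trivial monodromy, $N_{V_A} = 0$. By the naturality asserted in Proposition 2, $N_{V_A}$ is obtained from $N_{V_{\rbf}}$ by base change along the classifying morphism $f \colon \rbf \to A$, so $f$ annihilates every entry of $N_{V_{\rbf}}$ and therefore factors through $\rbfn$.

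Next I would verify the converse. Given $f \colon \rbf \to A$ that factors through $\rbfn$, we have $V_A \cong V_{\rbfn} \otimes_{\rbfn, f} A$ as $A[G_K]$-modules. By the discussion immediately preceding the proposition, the extension $L/K$ was chosen precisely so that the action of $I_K$ on $V_{\rbfn}$ factors through the finite quotient $I_{L/K}$; this property is preserved by base change, so $V_A \in \mathcal{D}^{\Box}_{\vf, N=0}(A)$.

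Assembling these two directions yields a functorial bijection $\mathcal{D}^{\Box}_{\vf, N=0}(A) \cong \mathrm{Hom}_{\wf}(\rbfn, A)$, whence pro-representability by $\rbfn$ follows in the standard way (the functor is already pro-represented on $\arwf$, and $\rbfn$ is recovered as the inverse limit of its Artinian quotients). The only nontrivial ingredient is the base-change compatibility of the monodromy operator furnished by Proposition 2, which is where all the real work sits; granted that, the remaining argument is essentially formal manipulation of universal properties.
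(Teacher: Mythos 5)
Your proposal takes essentially the same route as the paper: identify the classifying map $\phi\colon \rbf \to A$, push forward the universal monodromy operator $N_{univ}$, and show the factorization of $\phi$ through $\rbfn$ is equivalent to the inertia action on $V_A$ factoring through $I_{L/K}$. You are slightly more explicit than the paper in separating the two directions of this equivalence, and your treatment of the converse direction (base change from $V_{\rbfn}$ preserves factorization through $I_{L/K}$) is clean.

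One wrinkle to flag in the forward direction: you speak of ``the associated Weil--Deligne representation'' of $V_A$ over $A$, but for $A \in ob(\arwf)$ the ring $A$ is Artinian with residue field $\f$, hence $p$-torsion, and the Weil--Deligne formalism of Proposition 2 is set up only over rings of the form $A_o[1/p]$. The paper sidesteps this by simply \emph{defining} $N_{V_A} \in M_d(A)$ as $\phi(N_{univ})$, rather than as the monodromy of a Weil--Deligne representation over $A$ itself. Your argument is salvageable with the same fix: one needs only that $N_{univ}$ is built (via Grothendieck's theorem applied to the universal representation) so that the restriction of $\rho_{univ}$ to a sufficiently small open subgroup of inertia is $\exp(t_p(\cdot)\,N_{univ})$; then $\phi(N_{univ}) = 0$ exactly when $\rho_A$ kills that subgroup, which is what ``factors through $I_{L/K}$'' means once $L$ is chosen as in the preceding discussion. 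With that substitution, your proof and the paper's coincide.
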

\begin{proof}
Let $A \in ob(\arwf)$ and let $V_A$ be a framed deformation of $\vf$.  By the universal property of $\rbf$ we know that this framed deformation must be induced by a local morphism $\phi:\rbf \rightarrow A$. Let $N_{univ} \in M_d(\rbf)$ be the universal monodromy operator.  Clearly the monodromy operator $N_{V_A} \in M_d(A)$ is the image of $N_{univ}$ under $\phi$.   By construction, $V_A$ factors through $I_{L/K}$ if and only if $N_{V_A}=0$.  Hence  the ideal generated by the entries of $N_{univ}$  is in the kernel of $\phi$ if and only if $N_{V_A}=0$.  The result follows immediately.
\end{proof}

\noindent
There is a more natural interpretation of this functor as follows:
\\ \\
\noindent
Let $I_L $ be the inertia subgroup of $L$.   Let $G_{un,L}: =G_K/I_L$.  It is naturally an extension of $\widehat{\mathbb{Z}}$ by the finite group $I_{L/K}$. 

Observe that $\vf$  naturally carries an action of $G_{un,L}$.  Let $A\in ob(\arwf)$.  If $V_A$ framed deformation $\vf$, then its restriction to $I_K$ factor through $I_{L/K}$ if and only if (as a $G_K$ representation) it factors through the quotient $G_{un,L}$. 

Let us define the functor $\mathcal{D}^{\Box}_{\vf, un, L}$,  which assigns to $A\in ob(\arwf)$ isomorphism classes of framed deformations of  $\vf$ (as a $G_{un, L}$ representation) over $A$.  By construction this functor is isomorphic to $\mathcal{D}^{\Box}_{\vf, N=0}$.  This reinterpretation is useful because it is of the form studied in \S2 setting $G= G_{un, L}$. In particular we observe that  because $G_{un, L}$ is a quotient of $G_K$ it satisfies the $p$-finiteness condition.  Hence  we may study the local geometry of $\xfn$ using deformation theoretic techniques.
\\ \\
\noindent
We now prove the main result of this section, which is a refinement of theorem 2.0.6 of \cite{Gee}.
\begin{thm}
If $\xfn$ is non-empty then it  is the union of formally smooth components each of dimension $d^2$.
\end{thm}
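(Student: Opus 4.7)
The plan is to exploit the reinterpretation of $\xfn$ in terms of deformations of $\vf$ as a representation of $G_{un, L} = G_K/I_L$, which (as the paper points out) is an extension of $\widehat{\mathbb{Z}}$ by the finite group $I_{L/K}$. By the deformation-theoretic description of complete local rings on the rigid generic fibre (the isomorphism $\rbe \cong \widehat{\rbf[1/p]_m}$ recalled at the end of \S2), combined with proposition 4, the complete local ring of $\xfn$ at a point $x$ with residue field $E/\qp$ is the universal framed deformation ring of $V_x$ viewed as a continuous $E$-linear representation of $G_{un,L}$. Thus it suffices to prove, for each such $x$, that this framed deformation ring is formally smooth over $E$ of relative dimension $d^2$.

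First I would establish formal smoothness via lemma 2(3) applied to $G = G_{un,L}$ and coefficient field $E$, i.e.\ by showing $H^2(G_{un,L}, ad\, V_x) = 0$. The inflation-restriction (Hochschild--Serre) spectral sequence for $1\to I_{L/K}\to G_{un, L}\to \widehat{\mathbb{Z}}\to 1$ reduces this to a computation: because $I_{L/K}$ is finite and $ad\,V_x$ is an $E$-vector space with $\text{char}(E)=0$, the Maschke argument gives $H^i(I_{L/K}, ad\,V_x) = 0$ for $i\geq 1$, so the spectral sequence degenerates to
\[
H^i(G_{un,L}, ad\,V_x) \;\cong\; H^i(\widehat{\mathbb{Z}}, (ad\,V_x)^{I_{L/K}}).
\]
For $\widehat{\mathbb{Z}}$ acting continuously on a finite dimensional $E$-vector space, cohomology vanishes in degrees $\geq 2$ (it is computed by the complex $M \xrightarrow{\mathrm{Frob}-1} M$), so $H^2(G_{un,L}, ad\,V_x) = 0$, giving formal smoothness of the unframed deformation ring, hence of the framed one.

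Next I would compute the framed tangent space dimension via lemma 2(2). Using the same Hochschild--Serre identification, $H^1(G_{un,L}, ad\,V_x)$ is the cokernel of $\mathrm{Frob}-1$ on $(ad\,V_x)^{I_{L/K}}$, while $H^0(G_{un,L}, ad\,V_x) = (ad\,V_x)^{G_{un, L}}$ is its kernel. On a finite dimensional $E$-vector space kernel and cokernel of any endomorphism have the same dimension, so
\[
\dim_E \mathcal{D}_{V_x}(E[\epsilon]) \;=\; \dim_E H^1 \;=\; \dim_E H^0 \;=\; \dim_E (ad\,V_x)^{G_{un,L}}.
\]
Plugging this into lemma 2(2) with $|\Sigma|=1$ gives
\[
\dim_E \mathcal{D}^{\Box}_{V_x}(E[\epsilon]) \;=\; \dim_E H^0 + d^2 - \dim_E (ad\,V_x)^{G_{un,L}} \;=\; d^2.
\]
Combining with the formal smoothness above, the complete local ring $\widehat{\mathcal{O}}_{\xfn, x}$ is isomorphic to $E[[t_1,\dots, t_{d^2}]]$, so $\xfn$ is smooth of pure dimension $d^2$ at $x$. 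Since $x$ was arbitrary, every irreducible component is formally smooth of dimension $d^2$, as claimed.

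I do not expect any single step to pose a serious obstacle: both the Hochschild--Serre vanishing and the kernel/cokernel equality are standard once one has set up the equivalence $\mathcal{D}^{\Box, N=0}_{\vf} \cong \mathcal{D}^{\Box}_{\vf, un, L}$. The main subtlety is only bookkeeping: making sure that the framed-versus-unframed correction term, the number of framings $|\Sigma| = 1$, and the passage from the characteristic-$p$ ring $\rbfn$ to its characteristic-zero completions at maximal ideals of $\rbfn[1/p]$ are all tracked correctly so that lemma 2 may be applied verbatim to $(G_{un,L}, V_x)$ over $E$.
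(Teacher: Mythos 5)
Your proof is correct and follows essentially the same route as the paper: reinterpreting $\xfn$ via deformations of $G_{un,L}$-representations, killing $H^2$ by Hochschild--Serre and Maschke in characteristic zero, and balancing the framed tangent-space formula against $\dim H^1(\widehat{\mathbb{Z}}, -) = \dim H^0(\widehat{\mathbb{Z}}, -)$. The only cosmetic difference is that you make explicit the kernel/cokernel-of-$(\mathrm{Frob}-1)$ computation which the paper leaves as ``well known.''
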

\begin{proof} Assume that $\xfn$ is non-empty.  Let $x\in \xfn$ have residue field $E/\qp$. Let $V_x$ be the associated representation of $G_K$.  By construction we know it factors through $G_{un,L}$.  Let $\mathcal{D}^{\Box, N=0}_{V_x} $denote the functor which assigns to every $A\in ob(\are)$ the set of isomorphism classes framed deformations of $V_x$ (as a $G_{un, L}$ representation) over $A$.    $\mathcal{D}^{\Box, N=0}_{V_x} $ is pro-represented by the complete local ring at $x$.  Hence we just need to show that  $\mathcal{D}^{\Box, N=0}_{V_x} $ is unobstructed and has tangent space of dimension $d^2$.

The obstruction to  $\mathcal{D}^{\Box, N=0}_{V_x} $ being smooth over is in $H^2(G_{un,L}, adV_x)$.  Observe that there is a short exact sequence 

$$ 0 \rightarrow I_{L/K} \rightarrow G_{un, L} \rightarrow \widehat{\mathbb{Z}} \rightarrow 0.$$
Where we have fixed an isomorphism $G_K/I_K \cong \widehat{\mathbb{Z}}$.  

A finite group acting on a vector space over as characteristic zero field has trivial cohomology in positive degree.  The higher inflation-restriction sequence coming from the Hoschild-Serre spectral sequence, tells us that 

$$H^2(G_{un,L}, adV_x) \cong  H^2(\widehat{\mathbb{Z}} , (adV_x)^{I_{L/K}}).$$

This latter groups is always trival, hence we deduce that the functor is unobstructed thus $\xfn$ is smooth at $x$. By lemma 1 we deduce that the dimension of the irreducible component containing $x$ is therefore equal to  $dim_E(H^1(G_{un,L}, adV_x)) +d^2 - dim_E((adV_x)^{G_K}) $.  The Hoschild-Serre spectral sequence again tells us that 

$$H^1(G_{un,L}, adV_x) \cong  H^1(\widehat{\mathbb{Z}} , (adV_x)^{I_{L/K}}).$$

It is well known that this latter space is equal to $H^0(\widehat{\mathbb{Z}} , (adV_x)^{I_{L/K}})$.  However,  this latter space is equal to $(adV_x)^{G_K}$ because $x \in \xfn$.  This completes the proof.
\end{proof}

\begin{thm}
If $L/K$ is tamely ramified then $\rbfn$ is formally smooth over $\wf$ of relative dimension $d^2$.
\end{thm}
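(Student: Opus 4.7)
The plan is to replay the proof of Theorem 1 at the closed $\f$-point of $\rbfn$, now applying Lemma 1 directly to the functor $\mathcal{D}^{\Box, N=0}_{\vf}$ itself. By the reinterpretation recorded just before Proposition 3, this functor coincides with the framed deformation functor for $\vf$ viewed as a continuous representation of $G_{un, L} = G_K/I_L$, a profinite group satisfying the $p$-finiteness condition. Lemma 1(3) then reduces the formal smoothness of $\rbfn$ over $\wf$ to showing that $H^2(G_{un, L}, ad\vf) = 0$.

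To establish this vanishing I would apply the Hochschild--Serre spectral sequence to
$$ 1 \to I_{L/K} \to G_{un, L} \to \widehat{\mathbb{Z}} \to 1. $$
Because $L/K$ is tamely ramified the order of $I_{L/K}$ is prime to $p$, so $H^q(I_{L/K}, ad\vf) = 0$ for every $q \ge 1$, and the spectral sequence degenerates to a natural isomorphism
$$ H^p(G_{un, L}, ad\vf) \cong H^p(\widehat{\mathbb{Z}}, (ad\vf)^{I_{L/K}}). $$
Since $\widehat{\mathbb{Z}}$ has cohomological dimension one, the right-hand side vanishes in degree $2$, which gives formal smoothness of $\rbfn$ over $\wf$.

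The relative dimension then follows from Lemma 1(2), which identifies it with $\dim_{\f} H^1(G_{un, L}, ad\vf) + d^2 - \dim_{\f} (ad\vf)^{G_{un, L}}$. Writing $M = (ad\vf)^{I_{L/K}}$ and invoking the isomorphism above, the standard identity $\dim_\f \ker(\phi-1) = \dim_\f \operatorname{coker}(\phi-1)$ for a topological generator $\phi$ of $\widehat{\mathbb{Z}}$ acting on the finite-dimensional $\f$-vector space $M$ forces $\dim H^0(\widehat{\mathbb{Z}}, M) = \dim H^1(\widehat{\mathbb{Z}}, M)$, so the relative dimension collapses to exactly $d^2$. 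The one essential difference from the proof of Theorem 1 is that in characteristic zero the vanishing $H^q(I_{L/K}, -) = 0$ for $q \ge 1$ was automatic, whereas here the tameness hypothesis is precisely what supplies the required prime-to-$p$ input; this is the step where I would expect to have to be most careful.
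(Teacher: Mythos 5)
Your proof is correct and follows essentially the same route as the paper: apply Hochschild--Serre to $1 \to I_{L/K} \to G_{un,L} \to \widehat{\mathbb{Z}} \to 1$, use tameness (in place of the characteristic-zero hypothesis from Theorem~1) to kill $H^{\ge 1}(I_{L/K}, ad\vf)$, deduce $H^2(G_{un,L}, ad\vf)=0$, and compute the dimension via $\dim H^0(\widehat{\mathbb{Z}},M) = \dim H^1(\widehat{\mathbb{Z}},M)$. The paper compresses this by citing the proof of Theorem~1 for the dimension count, but the content is identical.
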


\begin{proof}
By the proof of theorem 1 we just need to show that the deformation functor $\mathcal{D}^{\Box}_{\vf, N=0}$ is unobstructed in this case.  We know that $|I_{L/K}| =e(L/K)$ is coprime to $p$.  A standard result from the cohomology of finite groups tells us that the positive degree cohomology must be annihilated by $e(L/K)$.  Because $ad\vf$ is a vector space over $\f_p$ we deduce that $H^n(I_{L/K}, ad\vf) =0$ for $n>0$. Hence we may repeat the above argument to deduce that $H^2(G_{un,L}, ad\vf)=0$. The result follows.
\end{proof}

We do not know if this result remains true in the event of $L/K$ being wild.

\subsubsection{Local Raising Lowering Deformations}
As remarked in the introduction we will ultimately be concerned with $2$-dimensional $p$-adic global Galois representations whose restriction to a decomposition group at $l$ corresponds to a one dimensional representation under $\pi$, the local Langlands correspondence.  

Recall that if $V_E$ is a $2$-dimensional $E$-vector space, equipped with a continuous $E$-linear action of $G_K$ then under $\pi$, the associated representation of $GL_2(K)$ is one dimensional if and only if $V_E$ is a direct sum of two characters, one of which is a twist of the other by the $p$-adic cyclotomic character $\chi$.  The semi-simple mod $p$ representation associated to $V_E$ must therefore be the direct  sum of two characters, one of which is a twist of the other by the mod $p$ cyclotomic character $\bar{\chi}$.

Let  $\vf$ be a $2$-dimensional $\f$-vector space, equipped with a continuous representation of $G_K$ and a choice of basis $\beta_{\f}$.  Let $\lambda: G_K\rightarrow \f^*$ be a continuous character.  We define $\vf(\lambda):=\vf \otimes_{\f} \lambda$, the twist of $\vf$ by $\lambda$.
\begin{prop}The functors $\mathcal{D}^{\Box}_{\vf}$ and $\mathcal{D}_{\vf(\lambda)}^{\Box}$ are isomorphic.
\end{prop}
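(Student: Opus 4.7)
The plan is to exhibit an explicit isomorphism via twisting by a canonical lift of $\lambda$. The key input is that $\lambda: G_K \to \f^{\times}$ admits a canonical continuous lift $\tilde{\lambda}: G_K \to \wf^{\times}$, obtained by composing $\lambda$ with the Teichm\"uller multiplicative section $\f^{\times} \hookrightarrow \wf^{\times}$. For any $A \in \arwf$, the structure map $\wf \to A$ pushes $\tilde{\lambda}$ forward to a continuous character $\tilde{\lambda}_A: G_K \to A^{\times}$ lifting $\lambda$, and this assignment is compatible with morphisms in $\arwf$.

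Given such a lift, I define the transformation $\Phi_{\lambda}: \mathcal{D}^{\Box}_{\vf} \to \mathcal{D}^{\Box}_{\vf(\lambda)}$ as follows. For a framed deformation $V_A$ of $\vf$ with basis $\beta_A$ reducing to $\beta_{\f}$, set $V_A(\tilde{\lambda}_A) := V_A \otimes_A A(\tilde{\lambda}_A)$, where $A(\tilde{\lambda}_A)$ is the free rank-one $A$-module with $G_K$-action through $\tilde{\lambda}_A$. Choosing the canonical generator $1 \in A(\tilde{\lambda}_A)$, the basis $\beta_A$ induces a framing of $V_A(\tilde{\lambda}_A)$. Since $\tilde{\lambda}_A$ reduces to $\lambda$ modulo the maximal ideal, this framing lifts $\beta_{\f}$ viewed as a basis of $\vf(\lambda)$. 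Naturality in $A$ is immediate from the fact that $\tilde{\lambda}$ is defined once and for all over $\wf$.

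To produce an inverse, I apply the same construction with $\lambda$ replaced by $\lambda^{-1}$, giving $\Phi_{\lambda^{-1}}: \mathcal{D}^{\Box}_{\vf(\lambda)} \to \mathcal{D}^{\Box}_{\vf(\lambda^{-1})(\lambda)} = \mathcal{D}^{\Box}_{\vf}$. The compositions $\Phi_{\lambda^{-1}} \circ \Phi_{\lambda}$ and $\Phi_{\lambda} \circ \Phi_{\lambda^{-1}}$ are the identity because the canonical isomorphism $A(\tilde{\lambda}_A) \otimes_A A(\tilde{\lambda}_A^{-1}) \cong A$ sending $1 \otimes 1 \mapsto 1$ identifies the iterated twist with the original deformation, and the distinguished generators match up so that the framings are preserved.

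The only nontrivial point is the existence of the canonical lift $\tilde{\lambda}$; once the Teichm\"uller section is invoked, the remainder of the argument is essentially formal. No cohomological input or dimension count is required, and the proposition is genuinely a compatibility statement between framed deformation theory and character twisting.
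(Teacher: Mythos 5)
Your argument is correct and follows essentially the same route as the paper: take the Teichmüller lift $\tilde{\lambda}$ of $\lambda$, push it forward along the structure map to each $A$, twist deformations and framings by the resulting character, and invert by twisting back by $\tilde{\lambda}^{-1}$. The only cosmetic difference is that the paper phrases the inverse directly on objects rather than as $\Phi_{\lambda^{-1}}$, but the content is identical.
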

\begin{proof}
Let $\tilde{\lambda}: G_K \rightarrow W(\f)^*$ be the Teichmuiller lift of $\lambda$.  Let $A\in ob(\arwf)$.  We denote by $\tilde{\lambda}_A$ the canonical $A^*$-valued character induced by composition of $\tilde{\lambda}$ with the canonical inclusion $W(\f) \subset A$.   

Let $V_A$ be a deformation of $\vf$ over $A$.  By construction $V_A\otimes_A \tilde{\lambda}_A$ is a deformation of $\vf(\lambda)$ over $A$.  

Conversely if $V_A$ is a deformation of $\vf(\lambda)$, then $V_A\otimes_A \tilde{\lambda}_A^{-1}$ is a deformation of $\vf$ over $A$.  

This establishes a bijection between $\mathcal{D}_{\vf}(A)$ and $\mathcal{D}_{\vf(\lambda)}(A)$.  Given $A,B \in ob(\arwf)$ and a morphism $A\rightarrow B$, we have $\tilde{\lambda}_A\otimes_A B \cong \tilde{\lambda}_B$.  Hence we deduce that this gives a natural isomorphism $\mathcal{D}_{\vf} \cong \mathcal{D}_{\vf(\lambda)}$.  

Let $V_A$ be a deformation of $\vf$ over $A$, with a fixed $A$-basis $\beta_A$ lifting $\beta_{\f}$.  Then the image of $\beta_A$ under the natural morphism $V_A\rightarrow V_A\otimes_A \tilde{\lambda}_A$ gives an $A$-basis of $V_A\otimes_A \tilde{\lambda}_A$.  This gives rise to the natural  isomorphism of the framed deformation functors $\mathcal{D}^{\Box}_{\vf}\cong\mathcal{D}_{\vf(\lambda)}^{\Box}$
\end{proof}
Let  $\vf$ be a $2$-dimensional $\f$-vector space, equipped with a continuous representation of $G_K$ and a choice of basis $\beta_{\f}$.  Furthermore assume that 
\begin{enumerate}
\item $\vf (\bar{\chi}^{-1})^{G_K} \neq\{0\}$ 
\item $det(\vf)=\bar{\chi}$.
\end{enumerate}
These are precisely the conditions imposed in \S 2.6 of \cite{Kis5}.  In particular we have the the following result of Kisin:

\begin{prop}There is a closed subspace of $\mathfrak{X}_{\vf}^{\Box, \chi, sp}\subset \xf$  with the following two properties:
\begin{enumerate}
\item $\mathfrak{X}_{\vf}^{\Box, \chi, sp}$ is smooth of dimension of 3.
\item Let $x \in \xf$, have residue field $E/\qp$.  Then $x \in \mathfrak{X}_{\vf}^{\Box, \chi, sp}$ if and only if $V_x$, the representation associated to $x$, is an extension of $E$  by $E(\chi)$.
\end{enumerate}
\end{prop}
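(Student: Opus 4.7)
The plan is to define $\mathfrak{X}^{\Box,\chi,sp}_{\vf}$ as the rigid generic fibre of a quotient $R^{\Box,\chi,sp}_{\vf}$ of $\rbf$ pro-representing the subfunctor $\mathcal{D}^{\Box,\chi,sp}_{\vf}\subset\mathcal{D}^{\Box}_{\vf}$ which assigns to $A\in\mathrm{ob}(\arwf)$ those framed deformations $V_A$ fitting into an exact sequence of $A[G_K]$-modules
$$0\to A(\chi_A)\to V_A\to A\to 0,$$
where $\chi_A:G_K\to A^{\times}$ denotes the canonical lift of $\bar\chi$. The two standing hypotheses guarantee that $\vf$ itself is of this shape: (1) supplies a sub-line on which $G_K$ acts by $\bar\chi$, and (2) forces the quotient to be the trivial character. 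To see that this is a closed subfunctor, observe that after twisting the universal framed representation $V_{\rbf}$ by $\chi^{-1}$, demanding a $G_K$-fixed sub-line lifting $\vf(\bar\chi^{-1})^{G_K}$ amounts to the vanishing of finitely many explicit matrix entries; the resulting ideal $I\subset\rbf$ yields $R^{\Box,\chi,sp}_{\vf}:=\rbf/I$, whose generic fibre realises the desired closed subspace $\mathfrak{X}^{\Box,\chi,sp}_{\vf}\subset\xf$. Statement~(2) of the proposition is then immediate from the dictionary between points of $\xf$ and their associated $E$-valued framed deformations recalled in Section~2.

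For smoothness and the dimension count, my strategy follows the pattern of \S~2.6 of \cite{Kis5}, adapted to the $l\neq p$ setting. I would introduce an auxiliary \emph{resolved} functor $\widetilde{\mathcal{D}}$ parametrising pairs $(V_A,L_A)$ with $V_A\in\mathcal{D}^{\Box}_{\vf}(A)$ and $L_A\subset V_A$ a $G_K$-stable $A$-line isomorphic to $A(\chi_A)$ lifting the distinguished line in $\vf$. Formal smoothness of $\widetilde{\mathcal{D}}$ is visible: deformations of each individual character factor are unobstructed, and extension classes vary freely in $H^1(G_K,A(\chi_A))$, which is $A$-flat by the local Euler characteristic formula and Tate duality for $K/\q_l$ with $l\neq p$. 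Under the standing hypotheses the forgetful morphism $\widetilde{\mathcal{D}}\to\mathcal{D}^{\Box,\chi,sp}_{\vf}$ is an isomorphism, since any framed deformation in the target admits a unique such sub-line. Smoothness therefore transfers to $\mathcal{D}^{\Box,\chi,sp}_{\vf}$, and tallying the individual contributions---the two one-dimensional character deformation spaces, the one-dimensional extension classes, and the $d^2=4$ framing freedom, minus the automorphisms of the trivial split extension---yields dimension exactly $3$.

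The main obstacle will be pinning down the uniqueness of the sub-line $L_A$, which is what makes the forgetful morphism an isomorphism rather than merely a smooth cover. This rests on showing that, for any $A\in\mathrm{ob}(\arwf)$, the characters $\chi_A$ and the trivial character are sufficiently non-isomorphic on $G_K$ that any two sub-lines of $V_A$ isomorphic to $A(\chi_A)$ are forced to coincide; in the $l\neq p$ setting this reduces to a residual statement about $\bar\chi$ versus the trivial character, which is tractable but requires extra care when $l^m\equiv 1\pmod p$. The final dimension bookkeeping, balancing framing freedom against the automorphisms of the split extension, must also be carried out honestly to land exactly on $3$.
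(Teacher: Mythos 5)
The paper's proof is a one-line citation of Proposition~2.6.6 of \cite{Kis5}, so your real task is to reconstruct Kisin's argument. Your sketch has the right \emph{shape} — resolve the condition by tracking the sub-line, prove smoothness upstairs, and descend — but as written it breaks at two essential points and does not reproduce what Kisin actually does.

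First, the claim that $\mathcal{D}^{\Box,\chi,sp}_{\vf}$ is a closed subfunctor ``cut out by the vanishing of finitely many matrix entries'' conflates two different conditions. Vanishing of the lower-left entry of the universal framed representation, in a basis adapted to the residual line, cuts out the locus where the \emph{chosen framing vector} spans a $G_K$-stable $\chi$-line; the condition you want — that $V_A$ admits \emph{some} $G_K$-stable sub-line of type $\chi$ lifting the residual one — is basis-independent and strictly larger. The latter is the image of the former under changes of framing, and this image is in general only a constructible condition, not one defined by an ideal. Consequently the subfunctor you describe need not be pro-representable by a quotient $\rbf/I$ at all, and your $R^{\Box,\chi,sp}_{\vf}$ is not well-defined by the recipe given.

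Second, the forgetful morphism $\widetilde{\mathcal{D}}\to\mathcal{D}^{\Box,\chi,sp}_{\vf}$ is \emph{not} an isomorphism on $\arwf$, and this is not an edge case. Whenever $\bar\chi=1$, which happens precisely when $p\mid l^m-1$, a split $\vf$ carries a $\mathbb{P}^1$ of $G_K$-stable lines, so the sub-line is not unique residually and a fortiori not unique on thickenings; the map $\widetilde{\mathcal{D}}(\f[\epsilon])\to\mathcal{D}^{\Box,\chi,sp}_{\vf}(\f[\epsilon])$ has positive-dimensional fibres. You cannot transfer smoothness through such a map. Your dimension bookkeeping also contains a slip: the sub- and quotient-characters in the statement are the \emph{fixed} characters $\chi$ and $1$, so there are no ``two one-dimensional character deformation spaces'' to add; the only deformation parameter at the unframed level is the extension class in $H^1(G_K,E(\chi))$.

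What the cited proposition actually uses sidesteps both problems. Instead of trying to make the extension condition a subfunctor of $\mathcal{D}^{\Box}_{\vf}$, one constructs a projective scheme $\Theta:\mathcal{L}\to\mathrm{Spec}(\rbf)$ inside $\mathbb{P}^1_{\rbf}$ parametrising the universal $G_K$-stable $\chi$-line with trivial quotient. Smoothness of $\mathcal{L}$ over $\wf$ is proved by a deformation calculation on pairs — this is where your $\widetilde{\mathcal{D}}$ analysis belongs, carried out honestly. The closed subspace is then defined as (the generic fibre of) the scheme-theoretic image of $\Theta$, which is closed because $\Theta$ is proper. The crucial input that rescues both properties of the proposition is that on the generic fibre the $p$-adic cyclotomic character is a nontrivial unramified character of $G_K$ — it sends Frobenius to $l^m\neq 1$ — so for every $E$-valued point the $\chi$-sub-line is \emph{unique} and $\Theta[1/p]$ is a closed immersion onto $\mathfrak{X}^{\Box,\chi,sp}_{\vf}$. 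That properness-plus-generic-uniqueness argument is exactly what your write-up omits, and it is the mechanism by which Kisin replaces the failed ``isomorphism of functors on $\arwf$'' by an isomorphism after inverting $p$.
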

 \begin{proof} This is proposition 2.6.6 from \cite{Kis5}.
 \end{proof}
Note that the representations on $\mathfrak{X}_{\vf}^{\Box, \chi, sp}$ all have fixed determinant $\chi$.  We wish to remove this restriction.  

Let us define the subspace $\mathfrak{X}_{\vf}^{\Box, sp}\subset \xf$ by demanding that $x \in  \mathfrak{X}_{\vf}^{\Box, sp}$ if and only if the associated representation of $GL_2(K)$ under the local Langlands correspondence occurs as a subquotient of a reducible principal series.  More concretely, let $x \in \xf$ have residue field $E/\qp$.  Then $x\in \mathfrak{X}_{\vf}^{\Box, sp}$ if and only if $V_x$ is a twist of an extension of $E$ by $E(\chi)$.  Clearly $\mathfrak{X}_{\vf}^{\Box, \chi, sp}\subset \mathfrak{X}_{\vf}^{\Box, sp}$.

\begin{thm} $\mathfrak{X}_{\vf}^{\Box, sp}$ is formally smooth of dimension 4.
\end{thm}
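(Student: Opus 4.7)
The plan is to realise $\mathfrak{X}_\vf^{\Box,sp}$ as a ``twist family'' built from Kisin's fixed-determinant space $\mathfrak{X}_\vf^{\Box,\chi,sp}$ and a one-dimensional space of character deformations, thereby reducing the statement to Proposition 5 combined with a direct computation for characters.

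First, I would use Proposition 4 to reduce to the case where $\vf$ is itself an extension of $\f$ by $\f(\bar\chi)$. Indeed, if $\vf$ is a twist of such an extension by some character $\bar\lambda$, then $\mathcal{D}^{\Box}_{\vf}\cong\mathcal{D}^{\Box}_{\vf(\bar\lambda^{-1})}$ naturally, so the corresponding subspaces $\mathfrak{X}_\vf^{\Box,sp}$ are canonically identified. Under this reduction $\vf$ satisfies the hypotheses of Proposition 5, so $\mathfrak{X}_\vf^{\Box,\chi,sp}\subset\mathfrak{X}_\vf^{\Box,sp}$ is already known to be formally smooth of dimension $3$.

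Next, I would introduce $\mathfrak{C}$, the rigid analytic generic fibre of the universal deformation ring of the trivial character $G_K\to\f^\times$. Using $l\neq p$ and local class field theory, the pro-$p$ completion of $G_K^{ab}$ is $\mathbb{Z}_p\oplus\mu_{p^\infty}(K)$; the torsion summand dies on the generic fibre, so $H^1(G_K,E)\cong E$ for every residue field $E/\qp$, while $H^2(G_K,E)=0$ by Tate duality. Hence $\mathfrak{C}$ is formally smooth over $W(\f)[1/p]$ of dimension $1$. I would then define a morphism of rigid spaces
$$
\mu:\mathfrak{X}_\vf^{\Box,\chi,sp}\times\mathfrak{C}\longrightarrow\mathfrak{X}_\vf^{\Box,sp},\qquad (V',\eta)\longmapsto V'\otimes\eta,
$$
with the framing of $V'\otimes\eta$ induced from that of $V'$ (as $\eta$ is forced to reduce to the trivial character modulo $p$, this lifts the chosen framing of $\vf$).

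To see that $\mu$ is an isomorphism I would construct an inverse: for $V\in\mathfrak{X}_\vf^{\Box,sp}$ the character $\eta_V$ is characterised as the unique continuous character of $G_K$ with $\bar\eta_V=1$ such that $V\otimes\eta_V^{-1}$ admits a $G_K$-stable line isomorphic to $E(\chi)$ lifting the distinguished subrepresentation $\f(\bar\chi)\subset\vf$. Setting $V':=V\otimes\eta_V^{-1}\in\mathfrak{X}_\vf^{\Box,\chi,sp}$, the assignment $V\mapsto(V',\eta_V)$ inverts $\mu$. Combining Proposition 5 with the smoothness of $\mathfrak{C}$ of dimension $1$ yields smoothness of dimension $3+1=4$ for the product, and hence for $\mathfrak{X}_\vf^{\Box,sp}$. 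The main obstacle is to show that $\mu$ and its inverse are genuine morphisms of rigid analytic spaces, not merely bijections on closed points: concretely, one must verify that the distinguished filtration of the universal family over $\mathfrak{X}_\vf^{\Box,sp}$ descends to an analytic character $\eta_V$ varying rigid-analytically in $V$, and that formation of $V\otimes\eta_V^{-1}$ commutes with arbitrary base change, so that the formal product structure indeed yields the required formal smoothness.
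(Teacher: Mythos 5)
Your argument is essentially identical to the paper's own proof: twist the fixed-determinant space $\mathfrak{X}_\vf^{\Box,\chi,sp}$ (Proposition~5, dimension~3) by the one-dimensional character deformation space to exhibit $\mathfrak{X}_\vf^{\Box,sp}$ as the product, and the reduction via Proposition~4 is precisely the remark the paper makes immediately after the theorem. The concern you flag at the end—that the twist really defines a morphism of rigid spaces and not just a pointwise bijection—is handled implicitly in the paper by working at the level of completed local rings (which by \S 2 pro-represent deformation functors), where the twist isomorphism is exactly Proposition~4, so nothing beyond formal smoothness at each point is needed.
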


\begin{proof}  By local Tate-Duality the universal deformation ring of the trivial representation of $G_K$ on a one dimensional $\f$-vector space has universal deformation ring isomorphic to the Iwasawa algebra over $W(\f)$.  In particular the associated deformation space, which we denote by $\mathfrak{X}$, is smooth of dimension 1.  

If $x \in \mathfrak{X}_{\vf}^{\Box, sp}$ then $V_x$ is a twist of a unique representation occuring in $\mathfrak{X}_{\vf}^{\Box, \chi, sp}$ by a unique character which is residually trivial.  We deduce that 
 $$\mathfrak{X}_{\vf}^{\Box, sp}\cong \mathfrak{X}\times \mathfrak{X}_{\vf}^{\Box, \chi, sp}.$$
 This in conjunction with proposition 5 yields the result.
\end{proof}
We should remark that by proposition 4 this result remains true after twisting $\vf$ be a character.  In particular,  given any $\vf$ such that $\mathfrak{X}_{\vf}^{\Box, sp}$ is non-empty, we know that it is formally smooth of dimension 4.

\subsection{Local Deformations at $p$}
In this section we review the theory of 2-dimensional trianguline deformation theory and recast Kisin's $h$-deformation functor in this language.
\\ \\
\noindent
Let  $\bc$ and $B_{dR}$ denote Fontaine's  crystalline and de Rham period rings (for an excellent survey of their construction see \cite{LB}).  Recall that $\bc$ is a topological $\qp$-algebra equipped with a continuous action of $\gqp$ and a continuous frobenius operator $\varphi$, which commutes with the Galois action.  Similarly, $B_{dR}$ is a topological $\qp$-algebra, which comes with a continuous action of $\gqp$ and a separated, exhaustive decreasing filtration.   $\bc$ is constructed from  $\bc^+ \subset B_{dR}^+ = Fil^0(B_{dR})$  by inverting the $p$-adic period $t \in \bc^+$.  $\bc$ naturally comes equipped with a separated, exhaustive decreasing filtration coming from the natural embedding into $B_{dR}$.  If $V$ is a finite dimensional $\qp$-vector space with a continuous $\qp$-linear action $\gqp$ we define the  functors $D_{cris}^+$, $D_{cris}$, $D_{dR}$ according to the usual recipe of Fontaine. For example

$$D_{cris}^+(V):=(\bc^+\otimes_{\qp}V)^{\gqp},$$
where $\gqp$ acts diagonally.  
\\ \\
\noindent
Let $E/\qp$ be a finite extension. Let $V_E$ be a finite dimensional $E$-vector space equipped with a continuous $E$-linear action of $\gqp$.  Following \cite{Kis4} we assume that $V_E$ has a non-trivial cristalline period, i.e. there exists  $\lambda \in E^{*}$ such that there is a non-zero vector

$$v_h \in D_{cris}^+(V_E)^{\varphi = \lambda}$$
Note that we are taking dual conventions to \cite{Kis4}, where the equivalent property of the dual representation is considered.  To clarify the link,  if we denote by $V_E^*$ the $E$-dual of $V_E$ then the existence of $v_h$ is equivalent to a non-zero $\gqp$-equivariant $E$-linear morphism
 $$h:  V_E^* \rightarrow (\bc^+ \otimes_{\qp} E)^{\varphi = \lambda}.$$

By theorem 6.3 of \cite{Kis4},  if $f$ is a finite slope,  overconvergent, cuspidal, $p$-adic eigenform defined over $E$, then the local representation attached to $f$ at $p$ satisfies this condition where $\lambda = a_p(f)$, and $a_p(f)$ is the eigenvalue of the $U_p$ operator.  Again note that we are implicitly dualising the global representation associated to $f$ in \cite{Kis4}.   This is the fundamental reason for introducing this somewhat technical looking definition.  A more conceptual approach using Fontaine's theory of $(\varphi, \Gamma)$-modules was introduced by Colmez (\cite{PC}), where, in the two dimensional case, he calls such representations (up to twist) trianguline.  This theory has been extensively developed both by Colmez, in the two dimensional case, and by Bellaiche and Chenevier in higher dimensions (\cite{BC}).  We now review this approach.

\subsection{($\varphi, \Gamma$)-modules and the Robba Ring}
Following  \cite{Kis4} we fix the convention that the $p$-adic cyclotomic character has Hodge-Tate weight 1 and Sen polynomial $X-1$. As in \cite{Chen} we normalise the local class fields theory reciprocity map to send a uniformiser to geometric Frobenius.   Under this choice of normalisation the the cyclotomic character corresponds to  $\chi$, the character on $\mathbb{Q}_p^*$ given by $\chi(x) = x|x|$.  The reader is cautioned  that our convention on the sign of  Hodge Tate weights is different than in \cite{Chen}  which we shall frequently cite.  As above let $E/\qp$ be a finite extension.  
\\ \\
\noindent
The \textit{Robba} ring with coefficients in $E$ is the $E$-algebra $\mathcal{R}_E$ of powers series

$$f(z) = \sum_{n \in \mathbb{Z}} a_n(z-1)^n, \; a_n \in E$$
converging on some annulus of $\mathbb{C}_p$ of the form $r(f) \leq |z-1|<1$, equipped with its natural $E$-algebra toplology.  $\mathcal{R}_E$ is naturally equipped with commuting $E$-linear, continuous actions of $\varphi$ and the group $\Gamma:=\mathbb{Z}_p^*$ defined by $$\varphi(f)(z) = f(z^p), \; \;\;\;\;\gamma(f)(z) = f(z^{\gamma}).$$  Note that $\mathcal{R}_E = \mathcal{R}_{\qp}\otimes_{\qp}E$. Similarly, if $A \in ob(\are)$ we define $\mathcal{R}_A := \mathcal{R}_{\qp}\otimes_{\qp}A$.  In the case when $A =E$, the \textit{Robba} ring over $E$ is a Bezout domain.
An important element of $\mathcal{R}_{\qp}$ (and consequently any $\mathcal{R}_A$) is
$$t = log(z):= \sum_{n \geq 1} (-1)^{n+1}\frac{(z-1)^n}{n}.$$
It is important to observe that

$$\varphi(t) =pt, \;\; \gamma(t) = \gamma t, \; \forall \gamma \in \Gamma.$$
\begin{defin} let $A\in ob(\are)$. A $(\varphi, \Gamma)$-module over $\mathcal{R}_A$ is a finitely generated $\mathcal{R}_A$-module $D$ which is free over $\mathcal{R}_{\qp}$ and eqiupped with commuting, $\mathcal{R}_A$-semilinear, contiuous actions of $\varphi$ and $\Gamma$, and such that $\mathcal{R}_{\qp}\varphi(D)=D$.
\end{defin}
\noindent
Let $A \in ob(\are)$.   Let $D$ be a $(\varphi, \Gamma)$-module over $\mathcal{R}_A$.  Let $D$ be of rank $d \in \mathbb{N}$ over $\mathcal{R}_{\qp}$. By work of Kedlaya (\cite{Ked}) we may associate to $D$  a sequence of rational numbers $s_1 \leq \cdots \leq s_d$ called the slopes of $D$.  We say that $D$ is \textit{etale} if all the slopes are 0.  
By work of Fontaine, Cherbonnier-Colmez and Kedlaya (see proposition 2.7 of \cite{PCT}) we have:

\begin{prop}There is a $\otimes$-equivalence of categories between $A$-representations of $\gqp$ and etale $(\varphi, \Gamma)$-module over $\mathcal{R}_A$.  
\end{prop}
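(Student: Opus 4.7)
The plan is to establish the equivalence first in the base case $A = E$ and then extend to arbitrary $A \in \are$ by devissage along the maximal ideal $\mathfrak{m}_A$. Over $E$ the statement is the composition of three classical results: Fontaine's equivalence between continuous $E$-linear representations of $\gqp$ and étale $(\varphi, \Gamma)$-modules over the Fontaine ring $\mathcal{B}_E$; the overconvergence theorem of Cherbonnier--Colmez, which shows that every étale $(\varphi, \Gamma)$-module over $\mathcal{B}_E$ descends uniquely to the subring $\mathcal{B}^\dagger_E$ of overconvergent elements; and Kedlaya's slope filtration theorem, which implies that the base change functor from $\mathcal{B}^\dagger_E$ to $\roe$ induces a tensor equivalence on étale objects. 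Each of these respects tensor products and duals, so their composition yields the desired $\otimes$-equivalence for $A = E$.

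For general $A \in \are$, given a finite free $A$-module $V_A$ with continuous $\gqp$-action, I would first regard it as an $E$-representation $V_E$ by restriction of scalars. Since the $A$-action commutes with $\gqp$, by functoriality of the base case it induces a commuting $A$-action on the associated étale $(\varphi, \Gamma)$-module $D(V_E)$ over $\roe$, endowing $D(V_E)$ with the structure of a module over $\ra = \rqp \otimes_{\qp} A$. One must then verify that $D(V_E)$ is finitely generated over $\ra$, free as an $\rqp$-module of rank $d \cdot \dim_E A$ where $d = \dim_A V_A$, and étale in the sense defined above. These structural properties are checked inductively along the filtration of $A$ by the powers $\mathfrak{m}_A^i$: each successive quotient $\mathfrak{m}_A^i/\mathfrak{m}_A^{i+1}$ is an $E$-vector space, and exactness of $D$ on short exact sequences of finite $A$-modules propagates the required properties up the induction. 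The construction of a quasi-inverse functor and the verification of $\otimes$-compatibility proceed in parallel.

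The main obstacle is Kedlaya's slope filtration theorem, which is the deepest ingredient and which we may take as a black box. Of the remaining verifications, the most delicate is tracking the $(\varphi, \Gamma)$-module structure under the devissage: at each inductive step one must check that étaleness of the $\varphi$-action is preserved under extension by étale $(\varphi, \Gamma)$-modules, a fact that itself rests on Kedlaya's theorem, since the étale objects form a subcategory of $(\varphi, \Gamma)$-modules over $\roe$ that is stable under extensions. All other checks are formal consequences of the construction.
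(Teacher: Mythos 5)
Your sketch is essentially the argument behind the result the paper cites: the paper itself offers no proof of this proposition but simply refers to Proposition~2.7 of Colmez, explicitly attributing the equivalence to Fontaine, Cherbonnier--Colmez, and Kedlaya --- exactly the three ingredients you identify for the case $A=E$, with the extension to Artinian $A$ by d\'evissage along $\mathfrak{m}_A$ being the standard way that reference handles coefficients. Your identification of the one genuinely nonformal point in the d\'evissage --- that pure slope-zero objects over $\roe$ are stable under extension, which follows from the Harder--Narasimhan formalism furnished by Kedlaya's slope filtration theorem --- is also accurate, so the proposal is a faithful reconstruction of the proof the paper implicitly invokes.
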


By lemma 2.2.7 of \cite{BC} if $V_A$ is a free, rank $n$, $A$-module equipped with a continuous $A$-linear action of $\gqp$ then the associated etale $(\varphi, \Gamma)$-module is free of rank $n$ over $\mathcal{R}_A$.

\subsection{  $(\varphi, \Gamma)$-modules of rank one  }

Let $A \in \are$.  Let $D$ be a $(\varphi, \Gamma)$-module over $\mathcal{R}_A$.  We say that $D$ is of rank one if it is free of rank one over $\mathcal{R}_A$.  Let $\delta: \mathbb{Q}_p^* \longrightarrow A^*$ be a continuous character.   Following Colmez (\cite{PC}) we may associate to $\delta$ a rank one $(\varphi, \Gamma)$-module $\mathcal{R}_A(\delta)$ as follows:  we equip $\mathcal{R}_A$ with the following semilinear actions
$$\varphi(1) = \delta(p)1,\;\;\;\;\; \;\; \gamma(1) = \delta(\gamma)1 \;\;\; \forall \gamma \in \Gamma.$$

By proposition 4.2 of \cite{Chen}, we know that any rank one $(\varphi, \Gamma)$-module over $\mathcal{R}_A$, arises as above for a unique such $\delta$.  We remark that if $\bar{\delta}$ is the reduction of this character modulo the maximal ideal then the $\mathcal{R}_A(\delta)$ is etale if and only if $\bar{\delta}(p) \in \mathcal{O}_E^*$.

\begin{defin} Let $A \in \are$.  Let $D$ be a $(\varphi, \Gamma)$-module over $\mathcal{R}_A$, which is free of rank 2.  We say that $D$ is \textit{trianguline}  if $D$ is an extension of two rank one $(\varphi, \Gamma)$-modules over $\mathcal{R}_A$.  A triangulation of $D$ is a choice of a rank one $(\varphi, \Gamma)$-modules $D'\subset D$ such that $D/D'$ is rank one.  More precisely $D$ is trianguline if there exist two continuous characters $\delta_1, \delta_2 : \mathbb{Q}_p^* \longrightarrow A^*$ such that there is a short exact sequence (in the category of $(\varphi, \Gamma)$-modules):

$$0 \rightarrow \mathcal{R}_A(\delta_1) \longrightarrow D_E \longrightarrow \mathcal{R}_A(\delta_2) \longrightarrow 0.$$ 
\end{defin}
\noindent
There is a natural generalisation of this definition to higher dimension (\cite{BC}).

Let $V_A$ be a free $A$-module equipped with a continuous $A$-linear action of $\gqp$.  By proposition 6 we may associate to $V_A$ a free $(\varphi, \Gamma)$-module over $\mathcal{R}_A$, which we denote $D_A$.  We say that $V_A$ is trianguline if $D_A$ is trianguline.  We can detect whether $V_A$ is trianguline directly by using the following crucial result of Colmez and Chenevier.

\begin{prop}
Let $A \in ob(\are)$.  Let $D_A$ be an etale  $(\varphi, \Gamma)$-module over $\mathcal{R}_A$, which is free of rank 2 corresponding to $V_A$ as above.    Suppose that for $\lambda \in A^*$ there exists a free $A$-module of rank one $A.v \subset D_{cris}(V_A)^{\varphi = \lambda}$.  Let $s \in \mathbb{Z}$ be the minimal integer such that $v \notin Fil^{s+1}D_{cris}(V_A)$ then $D_A$ is an extension of $\ra(\delta_2)$ by $\ra(\delta_1)$, where $\delta_1(p) = \lambda p^{-s}$ and $(\delta_1)|_{\Gamma} = \chi^{-s}$, and $\delta_2 = det (V_A)\delta_1^{-1}$.  In particular $D_A$ is trianguline.
\end{prop}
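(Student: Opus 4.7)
The plan is to construct out of $v$ a canonical $(\varphi,\Gamma)$-equivariant inclusion $\ra(\delta_1)\hookrightarrow D_A$, and then to identify the resulting rank one quotient as $\ra(\delta_2)$ via the classification of rank one $(\varphi,\Gamma)$-modules (proposition 4.2 of \cite{Chen}). The determinant constraint will then force $\delta_2=\det(V_A)\delta_1^{-1}$.

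The starting point is Berger's comparison between $(\varphi,\Gamma)$-modules over $\ra$ and Fontaine's period rings: there is a canonical $\varphi$-equivariant embedding $D_{cris}(V_A)\hookrightarrow D_A[1/t]^{\Gamma}$, and the de Rham filtration on $D_{dR}(V_A)$ is identified with the filtration induced by the $t$-adic filtration on the $B_{dR}^+$-lattice $D_{dif}^+(V_A)$ attached to $D_A$. Concretely this gives
\[
v\in Fil^s D_{cris}(V_A)\iff t^{-s}v\in D_{dif}^+(V_A).
\]
Set $w:=t^{-s}v\in D_A[1/t]$. Using $\varphi(t)=pt$, $\gamma(t)=\chi(\gamma)t$, together with the fact that $v$ is $\Gamma$-invariant and satisfies $\varphi(v)=\lambda v$, a direct calculation yields
\[
\varphi(w)=\lambda p^{-s}w,\qquad \gamma(w)=\chi(\gamma)^{-s}w \quad \forall\gamma\in\Gamma,
\]
so the $\ra$-linear map $\ra(\delta_1)\to D_A[1/t]$ sending $1\mapsto w$ is $(\varphi,\Gamma)$-equivariant with $\delta_1$ exactly as in the statement.

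The crucial point is that $\ra\cdot w$ in fact lies inside $D_A$, and does so as a \emph{saturated} rank one sub-$(\varphi,\Gamma)$-module. Berger's glueing theorem describes $(\varphi,\Gamma)$-modules over $\ra$ by the pair consisting of their localisation over $\ra[1/t]$ together with a compatible $B_{dR}^+$-lattice, so the containment $w\in D_A$ is equivalent to $t^{-s}v\in D_{dif}^+(V_A)$, which is one half of the dictionary above. Saturation amounts to $t^{-s-1}v\notin D_{dif}^+(V_A)$, which translates back to $v\notin Fil^{s+1}D_{cris}(V_A)$ --- precisely the minimality of $s$ in the hypothesis.

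Once saturation is in hand, the quotient $D_A/\ra\cdot w$ is a free rank one $(\varphi,\Gamma)$-module over $\ra$ (using lemma 2.2.7 of \cite{BC} to know $D_A$ is itself free of rank two), and proposition 4.2 of \cite{Chen} presents it as $\ra(\delta_2)$ for a unique continuous $\delta_2$. Taking determinants and invoking the compatibility of Fontaine's equivalence with $\otimes$-structures gives $\delta_1\delta_2=\det_{\ra}(D_A)=\det(V_A)$, yielding the claimed formula. The main obstacle is this descent-plus-saturation step: passing from a sub-line of $D_A[1/t]$ to a saturated sub-line of $D_A$ is the non-formal ingredient, and it is precisely what Berger's dictionary between $(\varphi,\Gamma)$-modules and $B_{dR}^+$-lattices is designed to control.
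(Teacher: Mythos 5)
The paper's own ``proof'' of this proposition is a one-line citation: ``This is proposition 4.6 of \cite{Chen}.'' You have instead reconstructed the direct argument that underlies Chenevier's proposition, and the skeleton is right: with $w=t^{-s}v$, the identities $\varphi(w)=\lambda p^{-s}w$ and $\gamma(w)=\chi(\gamma)^{-s}w$ follow from $\varphi(t)=pt$, $\gamma(t)=\gamma t$ and $v\in D_A[1/t]^{\Gamma,\,\varphi=\lambda}$; this matches $\delta_1$, and passing to determinants pins down $\delta_2=\det(V_A)\delta_1^{-1}$ once the quotient is identified via Chenevier's classification of rank-one $(\varphi,\Gamma)$-modules. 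This is essentially the argument the cited source carries out.

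The one place you should be more careful is the descent-and-saturation paragraph. The appeal to ``Berger's glueing theorem,'' phrased as an equivalence between $(\varphi,\Gamma)$-modules over $\mathcal{R}_A$ and pairs consisting of a module over $\mathcal{R}_A[1/t]$ and a $B_{dR}^+$-lattice, is not quite the statement you need, and as written it hides the role of the $\varphi$-eigenvector hypothesis. What actually makes the integrality $w\in D_A$ (and non-integrality of $t^{-1}w$) follow from a single $D_{dif}^+$-condition is that $w$ is a $\varphi$-eigenvector with eigenvalue $\lambda p^{-s}\in A^*$: the ideal $(t)\subset\mathcal{R}_A$ has infinitely many zeros (one for each level of the cyclotomic tower), the localisation $D_{dif,n}^+$ sees only level $n$, and it is the $\varphi$-compatibility $D_{dif,n+1}^+\cong\mathcal{R}_{n+1}^+\otimes_{\varphi}D_{dif,n}^+$ together with $\varphi(w)=\lambda p^{-s}w$ that propagates the pole/zero order of $w$ across all levels. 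Berger's comparison $D_{cris}(V_A)\cong D_A[1/t]^{\Gamma}$ with $Fil^iD_{dR}(V_A)=D_{dR}(V_A)\cap t^iD_{dif}^+(V_A)$ is what you should cite, together with this $\varphi$-propagation remark; the ``glueing theorem'' formulation is imprecise. A second small gap: you should say why $D_A/\mathcal{R}_A\cdot w$ is free of rank one over $\mathcal{R}_A$ for a general Artinian $A$ --- saturation gives torsion-freeness, hence freeness, over the residue field $E$ (where $\mathcal{R}_E$ is a Bezout domain), and one then lifts to $A$ via Nakayama; freeness of $D_A$ alone (your appeal to lemma 2.2.7 of \cite{BC}) does not by itself give this.
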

\begin{proof}This is proposition 4.6 of \cite{Chen}.
\end{proof} 

\begin{prop}Let $A \in \are$. Let $\delta_1, \delta_2: \qp^*\rightarrow A^*$ be two continuous characters.  Let $\bar{\delta}_1, \bar{\delta}_2: \qp^*\rightarrow E^*$ be their respective reductions modulo the maximal ideal of $A$.  Suppose that $\bar{\delta}_1\bar{\delta}_2^{-1} \neq x^{-i}, \chi x^i$ for $i \geq 0$.  Then $Ext_{(\varphi, \Gamma)}(\mathcal{R}_A(\delta_2), \mathcal{R}_A(\delta_1))$ is a free $A$-module of rank 1.
\end{prop}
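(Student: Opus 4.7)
The plan is to identify the Ext group with the cohomology group $H^1(\ra(\delta))$ for $\delta := \delta_1\delta_2^{-1}$, and then compute the latter via the Herr-style complex. Tensoring with $\ra(\delta_2^{-1})$ gives a natural isomorphism
$$\text{Ext}^1_{(\varphi, \Gamma)}(\ra(\delta_2), \ra(\delta_1)) \cong H^1(\ra(\delta)),$$
and $H^\bullet(\ra(\delta))$ is computed by the three-term complex
$$C^\bullet: \quad \ra(\delta) \xrightarrow{(\gamma-1,\,\varphi-1)} \ra(\delta)^2 \xrightarrow{(\varphi-1) \oplus (1-\gamma)} \ra(\delta),$$
where $\gamma$ is a topological generator of $\Gamma$ (minor modifications are needed when $p=2$). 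It therefore suffices to show $H^1(\ra(\delta))$ is free of rank $1$ over $A$.

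First I would show that $H^0(\ra(\delta))$ and $H^2(\ra(\delta))$ vanish. Over $A=E$, a direct calculation shows that $\roe(\bar\delta)^{\varphi,\Gamma}$ is spanned by $t^i$ precisely when $\bar\delta = x^{-i}$ for some $i \geq 0$ and is zero otherwise; the hypothesis rules out this case. The Tate-style duality for $(\varphi,\Gamma)$-modules (cf.~\cite{BC}) identifies $H^2(\roe(\bar\delta))$ with the $E$-dual of $H^0(\roe(\chi\bar\delta^{-1}))$, and the hypothesis $\bar\delta \neq \chi x^i$ forces this to vanish. For general $A \in \are$, I would induct on $\text{length}_A(A)$: choose a minimal nonzero ideal $I \subset A$ (so $I \cong E$ as an $A$-module), and propagate vanishing through the long exact sequence in cohomology associated to
$$0 \to \roe(\bar\delta) \to \ra(\delta) \to \mathcal{R}_{A/I}(\delta) \to 0.$$
Next, the Euler-Poincar\'e formula for $(\varphi,\Gamma)$-modules (cf.~\cite{BC}, \cite{Chen}, extended from $E$-coefficients to Artinian $E$-algebras by d\'evissage) gives
$$\sum_{i=0}^2 (-1)^i \,\text{length}_A H^i(\ra(\delta)) = -\text{length}_A(A),$$
so combined with the vanishing above, $\text{length}_A H^1(\ra(\delta)) = \text{length}_A(A)$.

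Finally, for freeness I would use the short exact sequence
$$0 \to \mathfrak{m}_A \ra(\delta) \to \ra(\delta) \to \roe(\bar\delta) \to 0.$$
The middle term $\mathfrak{m}_A \ra(\delta) = \ra(\delta) \otimes_A \mathfrak{m}_A$ admits a filtration whose graded pieces are copies of $\roe(\bar\delta)$, so by d\'evissage its $H^0$ and $H^2$ vanish; the long exact sequence then collapses to
$$0 \to H^1(\mathfrak{m}_A \ra(\delta)) \to H^1(\ra(\delta)) \to H^1(\roe(\bar\delta)) \to 0,$$
and the image of the leftmost map equals $\mathfrak{m}_A H^1(\ra(\delta))$. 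Hence $H^1(\ra(\delta))/\mathfrak{m}_A H^1(\ra(\delta)) \cong E$, so by Nakayama $H^1(\ra(\delta))$ is cyclic: $\cong A/J$ for some ideal $J \subseteq A$. The length equality forces $J=0$, giving $H^1(\ra(\delta)) \cong A$. The main technical obstacle is verifying the Euler-Poincar\'e formula and Tate duality for $(\varphi,\Gamma)$-modules over $\ra$ with $A \in \are$ rather than just $E$, but this reduces to the $E$-coefficient case by the same d\'evissage used above.
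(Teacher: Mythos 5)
The paper does not prove this result at all; it simply cites Proposition 4.3 of Chenevier's paper (\cite{Chen}). Your argument is therefore a genuinely self-contained re-derivation, and it follows the standard template in the literature: reduce to $H^1$ of a rank-one $(\varphi,\Gamma)$-module via twisting, compute with the Herr complex, establish vanishing of $H^0$ and $H^2$ using the hypotheses (directly for $H^0$, via Tate-style duality for $H^2$), reduce from $A$ to $E$ by d\'evissage, pin down the length via the Euler--Poincar\'e formula, and finish with Nakayama. This is sound and it is the right proof; what you gain over the paper is that the reader does not need to unwind Chenevier's conventions (in particular his opposite sign convention on Hodge--Tate weights, which the paper itself flags) to see why the hypotheses $\bar\delta_1\bar\delta_2^{-1} \neq x^{-i},\, \chi x^i$ are exactly what kill $H^0$ and $H^2$.

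One step deserves more justification: the assertion that the image of $H^1(\mathfrak{m}_A\ra(\delta)) \to H^1(\ra(\delta))$ is \emph{exactly} $\mathfrak{m}_A H^1(\ra(\delta))$. The inclusion $\supseteq$ is clear, but $\subseteq$ is not immediate from the long exact sequence alone: a $1$-cocycle valued in $\mathfrak{m}_A\ra(\delta)$ need not visibly decompose as $m\cdot c'$ with $m \in \mathfrak{m}_A$ and $c'$ a cocycle in $\ra(\delta)$. The cleanest fix is to observe that each term of the Herr complex $C^\bullet(\ra(\delta))$ is a free $A$-module (since $\ra = \roe\otimes_E A$ is $A$-free) and that $C^\bullet(\ra(\delta))\otimes_A E = C^\bullet(\roe(\bar\delta))$; the universal-coefficients exact sequence then gives
\[
0 \to H^1(\ra(\delta))\otimes_A E \to H^1(\roe(\bar\delta)) \to \mathrm{Tor}_1^A\bigl(H^2(\ra(\delta)), E\bigr) \to 0,
\]
and since you have already shown $H^2(\ra(\delta)) = 0$, this yields $H^1(\ra(\delta))\otimes_A E \cong H^1(\roe(\bar\delta))$, which has $E$-dimension $1$. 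Nakayama and the length count then finish exactly as you say. With that patch your argument is complete.
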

\begin{proof}This is proposition 4.3 of \cite{Chen}
\end{proof}

\subsection{Trianguline Deformations and Kisin's $h$-deformations} 
The aim of this section is to compare the trianguline deformation functor of \cite{BC} and the $h$-deformation functor introduced in \cite{Kis4}.   In favourable circumstances the two will turn out to be isomorphic.
\\ \\
\noindent
For the rest of the section let $V_E$ be a 2-dimensional $E$-vector space with continuous $E$-linear action of $\gqp$.  We say that $V_E$ satisfies $(\dagger)$ if

\begin{enumerate}
\item $V_E$ is absolutely irreducible.
\item  There exists $\lambda \in E^*$ such that there exists a non-zero period
$$v_h \in D_{cris}^+(V_E)^{\varphi = \lambda}.$$
\item $V_E$ is not cristalline (hence $v_h$ is unique up to a multiple of $E^*$).
\item The generalised Hodge-Tate weights of $V_E$ are $0$ and $k\notin -\mathbb{N} \cup \{0\}$.
\end{enumerate}
$(\dagger)$ is exactly the type of condition introduced in \cite{Kis4}.   We wish to recast it in the language of $(\varphi, \Gamma)$-modules.
\\ \\
\noindent
Let $D_E$, denote the $(\varphi, \Gamma)$-module associated to $V_E$.
\begin{prop} Assume that $V_E$ satisfies $(\dagger)$. Then $D_E$ is an extension  of the following form:
$$0 \rightarrow \mathcal{R}_E(\delta_1) \longrightarrow D_E \longrightarrow \mathcal{R}_E(\delta_2) \longrightarrow 0,$$ 
where  $\delta_1$ is trivial on $\mathbb{Z}_p^*$. Moreover, $\delta_1$ and $\delta_2$ are unique and the extension is not split in the category of $(\varphi, \Gamma)$-modules over $\mathcal{R}_E$. Moreover, the collection of such extensions of $D_E$ forms a torsor under $E^*$.  In particular $D_E$ is trianguline with a unique triangulation.
\end{prop}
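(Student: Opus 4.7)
The plan is to apply Proposition 8 to the crystalline period $v_h$ and to use each of the four clauses of $(\dagger)$ to pin down the resulting data. Since $v_h \in D_{cris}^+(V_E) = Fil^0 D_{cris}(V_E)$, the integer $s$ produced by Proposition 8 is automatically non-negative, and we obtain a short exact sequence
$$0 \rightarrow \roe(\delta_1) \longrightarrow D_E \longrightarrow \roe(\delta_2) \rightarrow 0$$
with $\delta_1|_{\Gamma} = \chi^{-s}$ and $\delta_1(p) = \lambda p^{-s}$.

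To force $s = 0$, I would compare generalised Hodge--Tate weights. In the sign convention adopted in \S3.2, the rank one module $\roe(\delta)$ with $\delta|_\Gamma = \chi^a$ has Hodge--Tate weight $a$; hence the sub of our triangulation has weight $-s$ and the quotient has weight $k+s$. Because Sen weights are additive in short exact sequences, the multiset $\{-s, k+s\}$ must equal the generalised Hodge--Tate weights $\{0, k\}$ of $V_E$. The alternative case $-s = k$ would force $k$ to be the non-positive integer $-s$, contradicting condition (4) of $(\dagger)$; hence $s = 0$, which makes $\delta_1|_\Gamma$ trivial and $\delta_1(p) = \lambda$. Consequently $\delta_1$ is uniquely determined, and then so is $\delta_2 = \det(V_E)\delta_1^{-1}$.

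Non-splitness I would prove by contradiction. A splitting of the above sequence would yield $D_E \cong \roe(\delta_1) \oplus \roe(\delta_2)$, and a direct summand of an \'etale $(\varphi,\Gamma)$-module is itself \'etale by Kedlaya's slope theory (each summand is simultaneously a sub and a quotient, so its slope is forced to be zero). Applying the equivalence of Proposition 6, $V_E$ would then decompose as a direct sum of two characters, contradicting absolute irreducibility (condition (1) of $(\dagger)$).

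The uniqueness of the triangulation and the $E^*$-torsor structure both follow from condition (3). Non-crystallinity of $V_E$, together with the existence of $v_h$, forces $D_{cris}(V_E) = E \cdot v_h$ to be one-dimensional, so $v_h$ is determined up to $E^*$. Conversely, any triangulation of $D_E$ satisfying the conclusions produces, by reversing the construction of Proposition 8, a non-zero vector of the one-dimensional space $D_{cris}^+(V_E)^{\varphi=\lambda}$, and hence recovers the same sub-object of $D_E$. The collection of embeddings $\roe(\delta_1) \hookrightarrow D_E$ realising this sub is therefore a torsor under $Aut_{(\varphi,\Gamma)}(\roe(\delta_1)) = E^*$, acting by rescaling the chosen generator. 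The main bookkeeping subtlety throughout is the Hodge--Tate sign convention; once that is fixed, each clause of the proposition reduces to an elementary consequence of Propositions 6 and 8 combined with the numerical constraint $k \notin -\mathbb{N} \cup \{0\}$.
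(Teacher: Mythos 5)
Your proof is correct and reaches the same conclusions, but the torsor clause is handled by a genuinely different argument from the paper's. For the identification $s=0$ your weight-matching argument is the dual of the paper's: the paper observes directly that $v_h\in Fil^0 D_{cris}(V_E)$ forces $s\ge 0$ while condition (4) of $(\dagger)$ forces the filtration jump of the one-dimensional $D_{cris}(V_E)$ to be non-positive, so $s=0$; you instead compare the Sen-weight multisets of the graded pieces with $\{0,k\}$. Both are fine. For the non-splitness, you agree with the paper (absolute irreducibility via Proposition 6), just spelling out the Kedlaya slope step. The real divergence is in the $E^*$-torsor claim. The paper proves it by establishing the stronger fact that $Ext^1_{(\varphi,\Gamma)}(\roe(\delta_2),\roe(\delta_1))$ is one-dimensional: it invokes Proposition 8 (Chenevier's Proposition 4.3) and then verifies its numerical hypothesis by ruling out $\delta_1\delta_2^{-1}=\chi x^i$ via condition (4) and ruling out $\delta_1\delta_2^{-1}=x^{-i}$ ($i>0$) via the integrality facts $\delta_1\delta_2(p)\in\mathcal{O}_E^*$, $\delta_1(p)\in\mathcal{O}_E$ (Chenevier, Remark 3.4), combined with $\delta_1(p)\notin\mathcal{O}_E^*$ forced by irreducibility. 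You bypass all of this: non-crystallinity forces $D_{cris}(V_E)=E\cdot v_h$ to be one-dimensional, so any triangulation of the prescribed form yields (via Berger's functor, as spelled out in the proof of Proposition 10) a non-zero element of the one-dimensional $D_{cris}^+(V_E)^{\varphi=\lambda}$ and hence the same sub-object, giving the torsor under $Aut(\roe(\delta_1))=E^*$ directly. This is legitimate for the statement as written, and is arguably cleaner. What it loses is the explicit verification of $\delta_1\delta_2^{-1}\neq x^{-i},\chi x^i$ for $i\ge 0$, which the paper carries out here precisely because it is re-used (over the Artinian rings $A\in\are$) as the input to Proposition 8 in the proofs of Propositions 10 and 12; your route would require recording that numerical fact separately.
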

\begin{proof}This is just an application of proposition 7 observing that  $D_{cris}^+(V_E) \subset Fil_0( D_{cris}(V_E))$ and the jumps in the filtration must occur at negative integers by $(4)$ of $\dagger$.  The assumption that $V_E$ is not cristalline together with the  part $(iii)$ of theorem 0.5 of \cite{PC}  tells us that the characters $\delta_1$ and $\delta_2$ are unique.  The extension is not split because this would contradict part $(1)$ of $(\dagger)$.  

We must finally show that the collection of such extensions is a torsor under $E^*$.  This is equivalent to showing that $Ext_{(\varphi, \Gamma)}(\mathcal{R}_E(\delta_2), \mathcal{R}_E(\delta_1))$ is a one dimensional $E$-vector space.  The  proposition 8 this is precisely when $\delta_1\delta_2^{-1} \neq x^{-i}, \chi x^i$ for $i \geq 0$.  The second condition cannot hold because of property $(4)$ of $(\dagger)$.  Thus, we need to overrule the situation where $\delta_1\delta_2^{-1} = x^{-i}$ for $i>0$.  By remark 3.4 of \cite{Chen} we know that $\delta_1\delta_2(p) \in \mathcal{O}_E^*$ and $\delta_1(p) \in \mathcal{O}_E$.  By property $(1)$ of $(\dagger)$ we know, furthermore, that $\delta_1(p) \notin \mathcal{O}_E^*$. Thus, $\delta_2(p) \notin \mathcal{O}_E$.  If $\delta_1\delta_2^{-1} = x^{-i}$ for some $i>0$, then $\delta_1(p) = p^{-i}\delta_2(p)$.  This second term is not integral, which is a contradiction. Thus, $\delta_1\delta_2^{-1} \neq x^{-i}$ for $i>0$. The result follows.
\end{proof}
\noindent
From now on assume that $V_E$ satisfies $(\dagger)$.  Fix a non-zero cristalline period $$v_h \in D_{cris}^+(V_E)^{\varphi = \lambda}.$$Following \cite{Kis4} we define  the $h$-deformation functor,  $\mathcal{D}^{h, \varphi}_{V_E}$ on $\are$, which assigns to any $A \in ob(\are)$ the set of isomorphism classes $V_A$ of deformations of  $V_E$ such that there exists $\tilde{\lambda} \in A^*$ lifting $\lambda$ and a non-zero period  $$\tilde{v}_h \in \mathcal{D}_{cris}^+(V_A)^{\varphi = \tilde{\lambda}}$$
lifting $v_h$.  By proposition 8.12 of \cite{Kis4},  $\tilde{\lambda}$ is uniquely determined by $\lambda$ and $\tilde{v}_h$ up to a multiple of $A^*$.
\\ \\
\noindent
Following \cite{BC}, we define the trianguline deformation functor $\mathcal{D}_{V_E}^{tr}$, in the language of $(\varphi, \Gamma)$-modules.  To any $A\in ob(\are)$,  $\mathcal{D}_{V_E}^{tr}(A)$ is the set of isomorphism classes of triples $(D_A, \pi, T)$, where 
\begin{enumerate}
\item $D_A$ is a free $(\varphi,\Gamma)$-module of rank 2 over $\mathcal{R}_A$.
\item $\pi: D_A \otimes_A E \cong D_E$ is an isomorphism of $(\varphi, \Gamma)$-modules over $\mathcal{R}_E$, i.e. $D_A$ is a deformation of $D_E$.
\item $T$ is a triangulation of $D_A$ associated to an extension of  the form:

$$\xymatrix{ 0 \ar[r] &\mathcal{R}_A(\tilde{\delta}_1) \ar[r] &D_A \ar[r] &\mathcal{R}_A(\tilde{\delta_2}) \ar[r] &0
},$$
where $\tilde{\delta}_1, \tilde{\delta}_2: \qp^*\rightarrow A^*$ are two continuous characters such that $\tilde{\delta}_1|_{\mathbb{Z}_p^*} = 1$ and after tensoring over $A$ with $E$ and applying $\pi$ we recover  $D_E$ together with its unique triangulation.  
\end{enumerate}
Observe that this implies that in this case $\tilde{\delta}_1$ and  $\tilde{\delta}_2$ reduce to $\delta_1$ and $\delta_2$ modulo the maximal ideal of $A$. This definition is slightly different than the one found in \cite{BC}.  We are restricting the behaviour of $\tilde{\delta}_1$.  
\noindent 
\begin{prop}
Let $V_E$ satisfy $(\dagger)$.   Then  $\mathcal{D}^{h, \varphi}_{V_E}$ and $\mathcal{D}_{V_E}^{tr}$ are isomorphic.
\end{prop}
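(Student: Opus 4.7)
The plan is to build natural transformations in both directions and verify they are mutually inverse. The forward map $F: \mathcal{D}^{h,\varphi}_{V_E}\to \mathcal{D}^{tr}_{V_E}$ is obtained by feeding an $h$-deformation into Proposition 7 (Colmez--Chenevier) to manufacture a triangulation; the reverse map $G$ is obtained by reading the rank-one subobject $\mathcal{R}_A(\tilde{\delta}_1)$ back as a crystalline period once we know $\tilde{\delta}_1$ is trivial on $\mathbb{Z}_p^*$. The uniqueness parts of Proposition 9, together with the uniqueness of $\tilde{\lambda}$ and of the pair $(\tilde{\lambda},\tilde{v}_h)$ up to $A^*$-scalar from \cite{Kis4}, will match the two sets of data.

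For the forward direction, fix $(V_A,\tilde{\lambda},\tilde{v}_h)\in \mathcal{D}^{h,\varphi}_{V_E}(A)$. Apply Proposition 7 to the $A$-line $A\cdot\tilde{v}_h\subset D_{cris}(V_A)^{\varphi=\tilde{\lambda}}$: this outputs a triangulation $0\to \mathcal{R}_A(\tilde{\delta}_1)\to D_A\to\mathcal{R}_A(\tilde{\delta}_2)\to 0$ with $\tilde{\delta}_1(p)=\tilde{\lambda}p^{-s}$ and $\tilde{\delta}_1|_\Gamma=\chi^{-s}$. The key computation is that the minimal $s$ with $\tilde{v}_h\notin Fil^{s+1}D_{cris}(V_A)$ is $s=0$: since $\tilde{v}_h\in D_{cris}^+\subset Fil^0$ we have $s\geq 0$, while condition $(\dagger)(4)$ on generalized Hodge--Tate weights (no jump at a positive integer that could swallow $v_h$ at the residue level, since the other weight $k$ is not in $-\mathbb{N}\cup\{0\}$) forces $v_h\notin Fil^1$ for the residual representation, hence also over $A$ by Nakayama. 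Therefore $\tilde{\delta}_1|_{\mathbb{Z}_p^*}=1$ and $\tilde{\delta}_1(p)=\tilde{\lambda}$, and reducing mod $\mathfrak{m}_A$ recovers the distinguished triangulation of $D_E$ from Proposition 9. Hence $(D_A,\pi,T)\in\mathcal{D}^{tr}_{V_E}(A)$, and the construction is visibly functorial in $A$.

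For the reverse direction, given $(D_A,\pi,T)\in\mathcal{D}^{tr}_{V_E}(A)$, set $\tilde{\lambda}:=\tilde{\delta}_1(p)$ and extract a crystalline period from the inclusion $\mathcal{R}_A(\tilde{\delta}_1)\hookrightarrow D_A$. The main obstacle is that the crystalline period of $V_A$ must be recovered from $(\varphi,\Gamma)$-module data; the pivotal input is that $\tilde{\delta}_1|_{\mathbb{Z}_p^*}=1$, which ensures that the canonical generator $e_{\tilde\delta_1}\in \mathcal{R}_A(\tilde{\delta}_1)$ produces, via $t^0 e_{\tilde\delta_1}$, an element fixed by $\Gamma$ and acted on by $\varphi$ as multiplication by $\tilde{\lambda}$; passing through Berger's comparison between the $(\varphi,\Gamma)$-module and $D_{cris}^+$ gives the desired $\tilde{v}_h\in D_{cris}^+(V_A)^{\varphi=\tilde\lambda}$ lifting $v_h$. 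Well-definedness uses the uniqueness statements in Proposition 9 together with Proposition 8 to see that the $E^*$-torsor of extensions is matched by the $E^*$-torsor of periods, so there is no ambiguity on isomorphism classes.

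Finally, $F$ and $G$ are mutually inverse: $G\circ F$ recovers $(V_A,\tilde\lambda,\tilde v_h)$ because Proposition 7 is set up so that the sub-line $A\cdot \tilde v_h$ is exactly the crystalline part of $\mathcal{R}_A(\tilde\delta_1)$ with its distinguished generator, and $F\circ G$ gives back $(D_A,\pi,T)$ by the uniqueness of the character $\tilde\delta_1$ whose restriction to $\Gamma$ is trivial (both at the level of $E$, by Proposition 9, and at the level of $A$, by deformation from the unique residual triangulation). The delicate step is the compatibility of the crystalline-period and triangulation data along infinitesimal thickenings; this is handled by invoking the $A$-flatness of the formation of $D_{cris}^+$ under $(\dagger)$, combined with Proposition 8.12 of \cite{Kis4} controlling the scalar ambiguity in $(\tilde\lambda,\tilde v_h)$.
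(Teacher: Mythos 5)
Your proposal follows essentially the same strategy as the paper, and the forward direction is sound: you apply Proposition 7 and show $s=0$, which forces $\tilde{\delta}_1|_{\mathbb{Z}_p^*}=1$. Your route to $s=0$ (via $\mathrm{Fil}^1 D_{cris}(V_E)=0$, which follows from $(\dagger)(4)$, and the fact that reduction mod $\mathfrak{m}_A$ carries $\mathrm{Fil}^1 D_{cris}(V_A)$ into $\mathrm{Fil}^1 D_{cris}(V_E)$) is a valid and slightly more direct alternative to the paper's, which instead reads $s=0$ off the constraint that $\tilde{\delta}_1$ must reduce to $\delta_1$ with $\delta_1|_\Gamma = 1$.

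The reverse direction, however, contains a genuine gap. You write that ``passing through Berger's comparison between the $(\varphi,\Gamma)$-module and $D_{cris}^+$ gives the desired $\tilde{v}_h\in D_{cris}^+(V_A)^{\varphi=\tilde\lambda}$.'' Berger's comparison identifies $D_{cris}(V_A)$ with $([1/t]D_A)^{\Gamma=1}$; applied to the $\Gamma$-fixed, $\varphi$-eigen generator $e_{\tilde{\delta}_1}$ this only yields an element of $D_{cris}(V_A)^{\varphi=\tilde\lambda}$, \emph{not} of $D_{cris}^+(V_A)$. The inclusion $D_{cris}^+\subset \mathrm{Fil}^0 D_{cris}$ is proper in general, so landing in $D_{cris}^+$ is exactly the nontrivial point. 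The paper bridges this in two steps that your sketch omits: Lemma 2.4.2 of \cite{BC} to place the element in $\mathrm{Fil}_0\bigl(D_{cris}(V_A)^{\varphi=\tilde\lambda}\bigr)$, and Lemma 3.2 of \cite{Kis4} to identify $\mathrm{Fil}_0\bigl(D_{cris}^{\varphi=\tilde\lambda}\bigr)$ with $D_{cris}^{+,\varphi=\tilde\lambda}$. Your closing appeal to ``$A$-flatness of the formation of $D_{cris}^+$'' does not address this; the issue is not flatness along the thickening but the comparison between the plus-part and the $\mathrm{Fil}^0$-part on the $\varphi$-eigenspace.
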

\begin{proof}   Let $D_E$ be the $(\varphi, \Gamma)$-module over $\mathcal{R}_E$ associated to $V_E$.  By proposition 2.3.13 of \cite{BC} we know that the defomation functor of $V_E$ and the deformation functor of $D_E$ are isomorphic.   Thus, for $A \in ob(\are)$, we must show that the existence of a non -zero crystalline period of $V_A$, lifting $v_h$, gives rise to a unique triangulation on $D_A$ with the desired  properties, and vice versa.
\\ \\
\noindent
Let $A\in ob(\are)$.  Choose a  deformation $V_A$ of $V_E$ such that there exists a lifting 
$$\tilde{v}_h \in \mathcal{D}_{cris}^+(V_A)^{\varphi = \tilde{\lambda}}.$$
Let $D_A$ be the etale $(\varphi, \Gamma)$-module associated to $V_A$.  By proposition 2.3.13 of \cite{BC}, This is a deformation of $D_E$, the $(\varphi, \Gamma)$-module associated to $V_E$.  

The existence of $\tilde{v}_h$ together with the above proposition implies that $D_A$ is trianguline.  We know any triangulation must reduce to the unique triangulation on $D_E$.  Condition $(4)$ of $(\dagger)$ implies that $\delta_1\delta_2^{-1} \neq x^i$ for all $i>0$.  Hence by proposition 2.3.6 of \cite{BC} we know that this triangulation is unique. This also follows for the above proposition 8.  If the rank one submodule has associated character $\tilde{\delta_1}$, then we know that it must reduce to $\delta_1$ modulo the maximal ideal.  However by above proposition we know  that  $\tilde{\delta_1}|_{\Gamma} = \chi^{-s}$ for some $ s \geq 0$.  Because $\delta_1|_{\Gamma} = 1$, the only way these two conditions can hold is if $\tilde{\delta_1}|_{\Gamma} = 1$.  Hence the triangulation on $D_A$ is of the desired form.
\\ \\
\noindent
Conversely let us assume that $D_A$ a free $(\varphi,\Gamma)$-module of rank 2 over $\mathcal{R}_A$ deforming $D_E$, together with a triangulation of the desired form.  Again by proposition 2.3.13 of \cite{BC} we know that $D_A$ is etale and corresponds to $V_A$, a deformation of $V_E$.   

Let $\tilde{v_h} \in D_A$ be a basis for the rank one submodule  $\mathcal{R}_A(\tilde{\delta}_1)$ such that $\Gamma$ acts trivially on $\tilde{\vi}_h$ and $\varphi$ acts by some $\tilde{\lambda}$, a lift of $\lambda$. Hence

$$ \tilde{v}_h  \in D_A^{\Gamma=1, \varphi = \tilde{\lambda}}.$$
A fundamental result of Berger (\cite{LBD}) tells us that there is a natural filtration and Frobenius preserving $A$-module isomorphism:

$$\mathcal{D}_{cris}(V_A) \cong ([1 / t] D_A)^{\Gamma =1}$$
Hence (under this isomorphism) $\tilde{v}_h \in \mathcal{D}_{cris}(V_A) ^{\varphi = \tilde{\lambda}}$. By lemma 2.4.2 of \cite{BC} we know that 
$$\tilde{v}_h \in Fil_0(\mathcal{D}_{cris}(D_A)^{\varphi = \tilde{\lambda}})$$
By lemma 3.2 of \cite{Kis4}  we know that 
$$Fil_0(\mathcal{D}_{cris}(D_A)^{\varphi = \tilde{\lambda}}) =  \mathcal{D}_{cris}^+(V_A)^{\varphi = \tilde{\lambda}}.$$
Hence 
 $$\tilde{v}_h \in   \mathcal{D}_{cris}^+(V_A)^{\varphi = \tilde{\lambda}}$$
Finally we may scale by an element of $A^*$, to ensure that $\tilde{v}_h$ is a lift of $v_h$.  This establishes a  bijection between  $\mathcal{D}^{h, \varphi}_{V_E}(A)$ and $\mathcal{D}_{V_E}^{tr}(A)$.  This establishes a natural isomorphism because the the fact that lifts of triangulations are necessarily unique by proposition 2.3.6 of \cite{BC}.
\end{proof}
This result allows us to use the techniques of $(\varphi, \Gamma)$-theory to determine the properties of this functor.

\begin{prop}
Assume that $V_E$ satisfies $(\dagger)$. Then the  functor  $\mathcal{D}^{h, \varphi}_{V_E}$ is pro-representable and formally smooth.
\end{prop}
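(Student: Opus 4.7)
The plan is to use the natural isomorphism $\mathcal{D}^{h, \varphi}_{V_E} \cong \mathcal{D}_{V_E}^{tr}$ of Proposition 10 to reduce the claim to pro-representability and formal smoothness for the trianguline deformation functor, and then exploit the $(\varphi, \Gamma)$-module machinery of \cite{BC}.

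For pro-representability, I would proceed by Schlessinger's criteria. Condition $(1)$ of $(\dagger)$ gives $End_{E[\gqp]}V_E = E$, so $\mathcal{D}_{V_E}$ is itself pro-representable by Proposition 1. The forgetful morphism $\mathcal{D}_{V_E}^{tr} \to \mathcal{D}_{V_E}$ records only the triangulation. By Proposition 9, $D_E$ admits a unique triangulation in which $\delta_1|_{\mathbb{Z}_p^*}=1$, and the proof of that proposition checked that $\delta_1\delta_2^{-1}$ equals neither $x^{-i}$ nor $\chi x^i$ for any $i \geq 0$. This is exactly the non-criticality hypothesis under which Proposition 2.3.6 of \cite{BC} guarantees that lifts of triangulations, when they exist, are unique. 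Hence $\mathcal{D}_{V_E}^{tr} \to \mathcal{D}_{V_E}$ is a relatively representable monomorphism, so pro-representability of $\mathcal{D}_{V_E}^{tr}$ is inherited from that of $\mathcal{D}_{V_E}$.

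For formal smoothness, I would fix a small surjection $A \twoheadrightarrow A'$ in $\are$ with square-zero kernel $I$ and a trianguline deformation $(D_{A'}, \pi', T')$ associated to characters $\tilde{\delta}_1', \tilde{\delta}_2' : \qp^* \to (A')^*$, and lift in two stages. First, continuous characters of $\qp^*$ lift from $(A')^*$ to $A^*$ (the kernel $1+I$ being a uniquely divisible $\qp$-module on which obstructions vanish); by multiplying such a lift by an auxiliary character trivial on $p$, I can arrange $\tilde{\delta}_1|_{\mathbb{Z}_p^*} = 1$. Second, I would lift the extension class cutting out $D_{A'}$. By Proposition 8, which is applicable thanks to the non-criticality verified in the proof of Proposition 9, the module $Ext_{(\varphi, \Gamma)}(\mathcal{R}_A(\tilde{\delta}_2), \mathcal{R}_A(\tilde{\delta}_1))$ is free of rank one over $A$, so its base change along $A \twoheadrightarrow A'$ surjects onto the corresponding $Ext$-group over $A'$. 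Lifting the extension class of $D_{A'}$ produces the required $D_A$, which by Proposition 2.3.13 of \cite{BC} is etale and thus corresponds to an $A$-linear deformation of $V_E$ carrying the evident triangulation, mapping to $(D_{A'}, \pi', T')$.

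The chief obstacle is the second lifting step: formal smoothness depends crucially on freeness of this $Ext^1$, which in turn requires the residual characters $\delta_1, \delta_2$ to avoid the forbidden pairs of Proposition 8. All four clauses of $(\dagger)$ are marshalled for this check in the proof of Proposition 9, and since the reductions $\delta_1, \delta_2$ are fixed throughout the deformation problem, the non-criticality persists for every lift, so the argument goes through.
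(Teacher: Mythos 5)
Your reduction to the trianguline deformation functor via the isomorphism $\mathcal{D}^{h,\varphi}_{V_E}\cong\mathcal{D}^{tr}_{V_E}$ is exactly what the paper does. Where you diverge is that the paper then simply cites Propositions 2.3.9 and 2.3.10 of Bella\"iche--Chenevier, which establish pro-representability and formal smoothness of $\mathcal{D}^{tr}_{V_E}$ under precisely the non-criticality condition on $\delta_1\delta_2^{-1}$ that Proposition 9 of the paper verifies. You instead attempt to re-derive those two facts from scratch, which is a legitimate thing to do, but your unpacking leaves two nontrivial steps asserted rather than proved.

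First, for pro-representability you claim that because lifts of triangulations are unique the forgetful map $\mathcal{D}^{tr}_{V_E}\to\mathcal{D}_{V_E}$ is a ``relatively representable monomorphism,'' and then conclude pro-representability is inherited. Uniqueness gives you a monomorphism, but a subfunctor of a pro-representable functor is not automatically pro-representable: you still need $\mathcal{D}^{tr}_{V_E}$ to satisfy the Mayer--Vietoris condition (Schlessinger's $H_1$ and $H_4$), i.e.\ that a deformation over a fibre product $A'\times_A A''$ carries a triangulation exactly when its pushforwards to $A'$ and $A''$ do. Uniqueness of triangulations makes this plausible (the two triangulations necessarily restrict to the same one over $A$ and so can be glued), but this is exactly the substance of \cite{BC} Prop.~2.3.9 and should be spelled out, not invoked by name. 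Second, in your smoothness argument, the assertion that freeness of $Ext^1_{(\varphi,\Gamma)}(\mathcal{R}_A(\tilde\delta_2),\mathcal{R}_A(\tilde\delta_1))$ over $A$ forces the natural base-change map to $Ext^1$ over $A'$ to be surjective is not automatic: you need a cohomology-and-base-change statement, which in this setting comes down to vanishing of the relevant $H^2$ under the non-criticality hypothesis. That vanishing is also what underlies Proposition 8 and is again what \cite{BC} Prop.~2.3.10 handles. So your route is the same as the paper's but trades a citation for an incomplete re-proof; to make it watertight you should verify the Mayer--Vietoris property directly and invoke the $H^2$-vanishing to justify the $Ext$ base change.
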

\begin{proof}By proposition 9,  $\mathcal{D}^{h, \varphi}_{V_E}$ is isomorphic to $\mathcal{D}_{V_E}^{tr}$. 
The assumptions in $(\dagger)$  allows us to apply proposition 2.3.9 and 2.3.10 of \cite{BC}.  The result follows immediately.
\end{proof}

\begin{prop}
Let us assume that $V_E$ satisfies $(\dagger)$. Then the functor $\mathcal{D}^{h,\varphi}_{V_E}$ is pro-representable by a formally smooth complete local Noetherian ring  of dimension 3, which we denote $R^{h,\varphi}_{V_E} \in \are$
\end{prop}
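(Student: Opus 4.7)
The plan is to leverage Proposition 11, which already establishes pro-representability and formal smoothness of $\mathcal{D}^{h,\varphi}_{V_E}$, reducing the problem to computing the dimension of the tangent space. Since $R^{h,\varphi}_{V_E}$ is formally smooth and complete local Noetherian with residue field $E$, its Krull dimension coincides with $\dim_E \mathcal{D}^{h,\varphi}_{V_E}(E[\epsilon])$; it therefore suffices to show the latter equals $3$.

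Using the natural isomorphism $\mathcal{D}^{h,\varphi}_{V_E} \cong \mathcal{D}^{tr}_{V_E}$ of Proposition 9, I would instead compute the tangent space of $\mathcal{D}^{tr}_{V_E}$. By Proposition 4.2 of \cite{Chen}, the characters $\tilde{\delta}_1, \tilde{\delta}_2$ attached to a triangulation are intrinsically determined by $T$, so a tangent vector decomposes into three pieces: (a) the deformation of $\tilde{\delta}_1$ subject to $\tilde{\delta}_1|_{\mathbb{Z}_p^*} = 1$; (b) the deformation of $\tilde{\delta}_2$; and (c) the lift of the extension class defining $D_E$, modulo automorphisms of the sub and quotient reducing to the identity modulo $\epsilon$.

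Piece (a) is parametrized solely by $\tilde{\delta}_1(p) \in (E[\epsilon])^*$ lifting $\delta_1(p)$, contributing dimension $1$. Piece (b) is $2$-dimensional: one parameter from $\tilde{\delta}_2(p)$ and one from continuous characters $\mathbb{Z}_p^* \to 1 + \epsilon E$ (the $\mu_{p-1}$-factor is torsion and $1 + p\mathbb{Z}_p \cong \mathbb{Z}_p$ via the logarithm). For piece (c), the hypotheses of Proposition 10 already guarantee the genericity condition $\delta_1 \delta_2^{-1} \notin \{x^{-i}, \chi x^i : i \geq 0\}$, so Proposition 8 yields that $\mathrm{Ext}^1_{(\varphi,\Gamma)}(\mathcal{R}_{E[\epsilon]}(\tilde{\delta}_2), \mathcal{R}_{E[\epsilon]}(\tilde{\delta}_1))$ is free of rank $1$ over $E[\epsilon]$; thus the lifts of the residual extension class $c \in E$ form a $1$-parameter family $c + \epsilon E$ prior to passing to isomorphism classes.

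The most delicate step, which I would treat carefully, is verifying that piece (c) collapses to a single isomorphism class. Automorphisms of $\mathcal{R}_{E[\epsilon]}(\tilde{\delta}_1)$ and $\mathcal{R}_{E[\epsilon]}(\tilde{\delta}_2)$ reducing to the identity form two copies of $1 + \epsilon E$ (coming from the $(\varphi, \Gamma)$-invariants of $\mathcal{R}_{E[\epsilon]}$), and jointly they act on the rank-one $E[\epsilon]$-module of extension classes by multiplication by a scalar of the form $1 + \epsilon s_0$ with $s_0 \in E$ varying freely. Since the residual class $c$ is nonzero by the non-split assertion of Proposition 10, this action sends $c + \epsilon a$ to $c + \epsilon(a + s_0 c)$ and is transitive on lifts, so piece (c) contributes $0$. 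Summing gives $1 + 2 + 0 = 3$, completing the proof when combined with Proposition 11.
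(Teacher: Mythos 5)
Your proof is correct and follows essentially the same route as the paper: both reduce, via Propositions 9 and 11, to a tangent-space computation, and both count one dimension from lifts of $\delta_1$ constrained on $\mathbb{Z}_p^*$, two from lifts of $\delta_2$, and zero from the extension class because it is determined up to isomorphism once the characters are fixed. Your only genuine addition is that you spell out why piece (c) collapses: the paper simply asserts that Proposition 8 gives a unique trianguline deformation up to isomorphism for given $(\tilde\delta_1,\tilde\delta_2)$, whereas you make explicit the transitive action of automorphisms $1+\epsilon E$ of the sub and quotient on lifts $c+\epsilon a$ of the nonzero residual class $c$, which is the missing justification for that uniqueness claim.
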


\begin{proof}
By the previous proposition we know that we need only to determine the  dimension of the tangent space of $\mathcal{D}^{h,\varphi}_{V_E}$.  Let $E[\epsilon]: = E[X]/(X^2)$ be the dual numbers.  We wish to determine $dim_E(\mathcal{D}^{h,\varphi}_{V_E}(E[\epsilon]))$.  Let $D_{E[\epsilon]}$ be a deformation of $D_E$.  By $(\dagger)$ and proposition 8 we know that any triangulation on $D_{E[\epsilon]}$ lifting the unique one on $D_E$ is necessarily unique.  Conversely, if  $\tilde{\delta}_1, \tilde{\delta}_2: \qp^* \rightarrow E[\epsilon]^*$  are two continuous characters lifting $\delta_1$ and $\delta_2$, then by proposition 8 we know that there is a unique (up to isomorphism) trianguline deformation $D_{E[\epsilon]}$ of $D_E$ with weights characters $\tilde{\delta}_1$ and $ \tilde{\delta}_2$.

Thus the problem is reduced to determining the acceptable lifts of $\delta_1$ and $\delta_2$.  There is no restriction on the $\tilde{\delta}_2$ lifting $\delta_2$ and the space of such lifts is naturally a 2 dimensional $E$-vector space.  The restriction  that $\tilde{\delta}_1$ is trivial on $\mathbb{Z}_p^*$ means that the space of appropriate lifts has $E$-dimension 1.  We deduce that $dim_E(\mathcal{D}^{h,\varphi}_{V_E}(E[\epsilon])) =3$. The result follows.
\end{proof}
It will be convenient to consider a framed version of $\mathcal{D}^{h, \varphi}_{V_E}$.  We will denote this new functor by $\mathcal{D}_{V_E}^{\Box, h}:=\mathcal{D}^{\Box}_{V_E}\times_{\mathcal{D}_{V_E}}\mathcal{D}^{h, \varphi}_{V_E}$.  We have removed $\varphi$ merely to simplify the notation.  The forgetful morphism $\mathcal{D}_{V_E}^{\Box, h} \rightarrow \mathcal{D}^{h, \varphi}_{V_E}$ is formally smooth of relative dimension 3. Hence we have :
\begin{cor}Let $V_E$ satisfy $(\dagger)$. then the functor $\mathcal{D}_{V_E}^{\Box, h}$ is pro-representable by a formally smooth complete local Noetherian ring  of dimension 6, which we denote $R^{\Box, h}_{V_E} \in \are$.
\end{cor}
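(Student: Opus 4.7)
The plan is to deduce the corollary directly from Proposition 11 together with the formal properties of the framing map, so essentially no new cohomological computation is needed. First I would unpack the definition: by construction
\[
\mathcal{D}_{V_E}^{\Box,h} = \mathcal{D}^{\Box}_{V_E}\times_{\mathcal{D}_{V_E}}\mathcal{D}^{h,\varphi}_{V_E},
\]
so I need to know that each of the three factors is pro-representable and then invoke the fact that a fiber product of pro-representable functors is pro-representable (by the completed tensor product of the representing rings). Since $V_E$ satisfies $(\dagger)$, it is absolutely irreducible and hence $\mathrm{End}_{E[\gqp]}V_E = E$; Proposition 1 (in the characteristic zero formulation) then produces $R_{V_E}$ pro-representing $\mathcal{D}_{V_E}$ and $\rbe$ pro-representing $\mathcal{D}^{\Box}_{V_E}$. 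Proposition 11 gives $R^{h,\varphi}_{V_E}$ pro-representing $\mathcal{D}^{h,\varphi}_{V_E}$, formally smooth over $E$ of dimension $3$. Thus $\mathcal{D}_{V_E}^{\Box,h}$ is pro-represented by $R^{\Box,h}_{V_E} := \rbe\,\widehat{\otimes}_{R_{V_E}}\,R^{h,\varphi}_{V_E}$.

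Next I would verify formal smoothness and the dimension count by transferring the relative tangent calculation along the base change. The forgetful morphism $\mathcal{D}^{\Box}_{V_E}\rightarrow\mathcal{D}_{V_E}$ is formally smooth by construction, so its base change
\[
\mathcal{D}_{V_E}^{\Box,h}\longrightarrow\mathcal{D}^{h,\varphi}_{V_E}
\]
is formally smooth as well. Its relative tangent dimension can be read off from Lemma 1(2): with $|\Sigma|=1$, $d=2$, and $\dim_E(\mathrm{ad}\,V_E)^{\gqp}=1$ (again by absolute irreducibility in $(\dagger)$), the relative dimension is $4 - 1 = 3$. Combining the formal smoothness of $R^{h,\varphi}_{V_E}/E$ of dimension $3$ with the formal smoothness of $R^{\Box,h}_{V_E}/R^{h,\varphi}_{V_E}$ of relative dimension $3$, we conclude that $R^{\Box,h}_{V_E}$ is a formally smooth complete local Noetherian $E$-algebra of dimension $3+3=6$.

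There is no genuine obstacle remaining: all the hard work sits in Proposition 11 and in the identification $\mathcal{D}^{h,\varphi}_{V_E}\cong\mathcal{D}_{V_E}^{tr}$ used to prove it. The only thing I would take a moment to verify carefully is that the fiber product of representable formal deformation functors really is represented by the completed tensor product (so the resulting ring is Noetherian and local with residue field $E$), and that formal smoothness genuinely base-changes through this fiber product; both are standard facts in Schlessinger-style deformation theory, but worth stating explicitly so that the dimension count $3+3=6$ is transparent.
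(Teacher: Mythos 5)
Your proof is correct and follows essentially the same route as the paper: the paper likewise observes that $\mathcal{D}^{h,\varphi}_{V_E}$ is pro-represented by a formally smooth ring of dimension $3$ and that the forgetful map $\mathcal{D}^{\Box,h}_{V_E}\to\mathcal{D}^{h,\varphi}_{V_E}$ is formally smooth of relative dimension $3$, then concludes $3+3=6$. You have merely made explicit two details the paper leaves implicit (that the fiber product is pro-represented by a completed tensor product, and that the relative dimension $3$ is computed from Lemma 1 with $|\Sigma|=1$, $d=2$, and $\dim(\mathrm{ad}\,V_E)^{\gqp}=1$ via absolute irreducibility), and your citation of "Proposition 11" for the dimension-$3$ result is off by one (it is the subsequent proposition that records the dimension), which is a harmless numbering slip in a blind proof.
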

\begin{proof}
The result is immediate after observing, by proposition 12, that $\mathcal{D}^{h,\varphi}_{V_E}$ is pro-represented by a formally smooth complete local noetherian ring of dimension 3 and that $\mathcal{D}_{V_E}^{\Box, h} \rightarrow \mathcal{D}^{h, \varphi}_{V_E}$ is formally smooth of relative dimension 3.
\end{proof}

\section{Global Deformations}
\subsection{Presenting Global Deformation Rings Over Local Ones}
We will ultimately be studying global deformation rings of deformations with prescribed behaviour both at $l$ and $p$, to give us information about the local geometry of deformation spaces.  Hence in this section we develop the theory of presenting global deformation rings over local deformation rings in the characteristic zero case.  We follow \cite{Kis2}, very closely, where the unequal characteristic case is considered.
\\ \\
Let $F$ be a number field and $S$ a finite set of places of $F$ containing all those which divide the prime $p$.  Fix an algebraic closure $\bar{F}$ of $F$ and denote $F_S \subset \bar{F}$ the maximal extension of $F$ unramified outside of $S$.  Write $G_{F,S} = Gal(\bar{F}/F)$.

Let $\Sigma \subset S$ and fix an algebraic closure $\bar{F_{\textit{v}}}$ of $F_{\textit{v}}$ for each $\textit{v} \in \Sigma$.  Write $\gv = Gal(\bar{\fv}/\fv)$. We also fix embeddings $\bar{F} \subset \bar{F_{\textit{v}}}$ inducing the inclusion $\gv \subset G_{F, S}$ for each $\textit{v} \in \Sigma$.  All Galois groups we have described satisfy the $p$-finiteness condition introduced in \S 2.

Let $E/ \mathbb{Q}_p$ be a finite extension and $V_E$ a finite dimensional $E$-vector space equipped with a continuous action of $G_{F, S}$.    Let $d = dim_E(V_E)$.

For each $\textit{v} \in \Sigma$ fix a basis $\beta_{\textit{v}}$ of $V_E$.  We denote by $\mathcal{D}^{\Box}_{\textit{v}}$ the framed deformation functor of $(V_E |_{\gv}, \beta_{\textit{v}})$.  We denote the universal framed deformation ring by $\rbv$.  Recall that it is a complete local Noetherian ring with residue field $E$.  We set $\rbs = \hat{\otimes}_{\textit{v}\in \Sigma}\rbv$ and we denote by $\mbs$ the maximal ideal of $\rbs$.

Similarly, we define the functor  $\mathcal{D}^{\Box}_{F,S}$ which to every $A \in ob(\are)$ assigns the set of isomorphism class of tuples $(V_A, \{ \beta_{\textit{v}, A} \}_{\textit{v} \in \Sigma})$ where $V_A$ is a deformation of $V_E$ as a $G_{F, S}$ representation and $\beta_{\textit{v}, A}$ is an $A$-basis.  This functor is pro-represented by a complete local Noetherian ring which we denote by $\rbfs$.  Again we denote the maximal ideal of $\rbfs$ by $\mbfs$.

The functoriality of our construction together with the Yoneda lemma ensures that there is a natural morphism $\rbs \rightarrow \rbfs$.  Hence $\rbfs$ naturally has the structure of a $\rbs$-algebra.  

For $i \in \{0, 1, 2\}$ we define $h^i_{\Sigma}$ to be the dimension of the kernel to the natural map of $E$-vector spaces:

$$\gamma^i: H^i(G_{F, S}, adV_E) \rightarrow \prod_{\textit{v}\in \Sigma}H^i(\gv, adV_E).$$

\begin{prop}
Let 

$$ \eta : \mbs/ \mbss \rightarrow \mbfs / \mbfss$$

be the map on tangent spaces induced by the natural map $\rbs \rightarrow \rbfs$.   Then $\rbfs$ is a quotient of a power series ring over $\rbs$ in $dim_E(coker\eta)$ variables by at most $dim_E(ker\eta) + h^2_{\Sigma}$ relations.
\end{prop}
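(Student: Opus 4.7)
The plan is to follow Kisin's approach in \cite{Kis2}, where the analogous statement is established with coefficients in $W(\mathbb{F})$; all the arguments carry over to our characteristic zero setting since they rest only on Nakayama's lemma and continuous Galois cohomology, both of which are available over Artinian local $E$-algebras.

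First I count generators. By Nakayama's lemma, the minimal number of $\rbs$-algebra generators of $\rbfs$ is $\dim_E \mbfs/(\mbfss + \mbs\cdot\rbfs)$. Writing any element of $\mbs\cdot\rbfs$ as $\sum a_i b_i$ with $a_i \in \mbs$ and $b_i \in \rbfs$, and reducing each $b_i$ modulo $\mbfs$ to its residue class in $E$, one sees that $\mbs\cdot\rbfs$ and the image of $\mbs$ coincide modulo $\mbfss$. Hence this count equals $r := \dim_E\operatorname{coker}\eta$. Fix a minimal presentation $\rbfs = \tilde{R}/J$ with $\tilde{R} = \rbs[[x_1,\ldots,x_r]]$.

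To bound the number of relations $s := \dim_E J/\tilde{\mathfrak{m}}J$, form the small extension
$$ 0 \to I \to R' \to \rbfs \to 0, \qquad I = J/\tilde{\mathfrak{m}}J, \quad R' = \tilde{R}/\tilde{\mathfrak{m}}J, $$
in which $I$, being annihilated by $\mathfrak{m}_{R'}$, is an $E$-vector space of dimension $s$. Each functional $\phi \in I^{*}$ pushes out to a small extension $R_\phi \to \rbfs$ with kernel $E$, carrying the natural $\rbs$-algebra structure inherited from $\tilde{R}$. The core claim is that sending $\phi$ to the obstruction to lifting the universal framed deformation $(V^{\mathrm{univ}},\{\beta_v^{\mathrm{univ}}\})$ to an $\rbs$-algebra framed deformation over $R_\phi$ defines an injective $E$-linear map $I^{*}\hookrightarrow \mathrm{Obs}$ whose target is controlled by Galois cohomology. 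Injectivity is formal: if the obstruction vanishes for some $\phi$, the universal property of $\rbfs$ furnishes an $\rbs$-algebra section $\rbfs \to R_\phi$ of $R_\phi \to \rbfs$, which, since the kernel $E$ lies in $\mathfrak{m}_{R_\phi}$, forces $\phi = 0$.

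It therefore suffices to show that $\dim_E \mathrm{Obs} \leq h^2_\Sigma + \dim_E\ker\eta$. I split the obstruction into two contributions. First, lifting the underlying global $G_{F,S}$-representation has obstruction in $H^2(G_{F,S}, adV_E)$; because the prescribed local framed lifts at each $v \in \Sigma$ (supplied by the $\rbs$-algebra structure on $R_\phi$) witness the vanishing of all local obstructions, this class is confined to $\ker\gamma^2$, of dimension $h^2_\Sigma$. Second, once a global lift has been chosen, a standard $1$-cocycle comparison shows the obstruction to matching its local restrictions with the prescribed framed local deformations lies in $\operatorname{coker}\gamma^1$. The principal technical obstacle is relating $\operatorname{coker}\gamma^1$ to $\ker\eta$: a snake-lemma analysis of the commutative diagram formed by the local and global framed-vs-unframed tangent sequences, coupled to the restriction map $\eta^{*}$ on framed tangent spaces, produces a surjection $\operatorname{coker}\eta^{*} \twoheadrightarrow \operatorname{coker}\gamma^1$, while finite-dimensional $E$-linear duality identifies $\dim_E\operatorname{coker}\eta^{*} = \dim_E\ker\eta$. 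Combining yields $\dim_E\mathrm{Obs} \leq h^2_\Sigma + \dim_E\operatorname{coker}\gamma^1 \leq h^2_\Sigma + \dim_E\ker\eta$, giving the claimed bound.
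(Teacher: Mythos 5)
Your proposal takes a genuinely different route from the paper's, and I want to flag one substantive gap before describing the comparison.

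The gap concerns your assertion that ``injectivity is formal: if the obstruction vanishes for some $\phi$, the universal property of $\rbfs$ furnishes an $\rbs$-algebra section $\rbfs \to R_\phi$ of $R_\phi \to \rbfs$, which, since the kernel $E$ lies in $\mathfrak{m}_{R_\phi}$, forces $\phi = 0$.'' The last clause is not correct as stated. The presentation $\tilde{R} = \rbs[[x_1,\ldots,x_r]] \twoheadrightarrow \rbfs$ is \emph{not} minimal over the base $E$: the induced map on tangent spaces $\tilde{\mathfrak{m}}/\tilde{\mathfrak{m}}^2 \to \mbfs/\mbfss$ has kernel $\ker\eta$, which can be nonzero. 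Consequently, for the ``standard'' reason you invoke, the mere existence of a section would not force $\phi=0$; you must use $\rbs$-linearity in an essential way. The argument that does work: given an $\rbs$-algebra section $\sigma$, set $\tau_1 = \sigma\circ(\tilde{R}\to\rbfs)$ and let $\tau_2:\tilde{R}\to R_\phi$ be the structural map; their difference $D=\tau_1-\tau_2$ is an $\rbs$-linear derivation valued in the kernel $E$, which therefore kills $\rbs$ and $\tilde{\mathfrak{m}}^2$. Since the linear part of any $j\in J$ lies in $\ker\eta\subset\mbs/\mbss$ (no $x_i$-components), one can write $j = j_1 + j_2$ with $j_1 \in \mbs$ and $j_2 \in \tilde{\mathfrak{m}}^2$, so $D(j)=0$, and unwinding the pushout one finds $\phi(\bar j) = -D(j) = 0$ for all $j\in J$. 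Without spelling this out, the claim is a genuine gap: the reader cannot see why the presentation being over $\rbs$ rather than $E$ matters, and this is precisely the crux. A related (minor) looseness is in combining the primary obstruction in $\ker\gamma^2$ with the secondary obstruction in $\operatorname{coker}\gamma^1$ into a single space $\mathrm{Obs}$: the secondary obstruction is defined only after the primary one vanishes, so what one actually proves is $\dim_E\ker\delta \leq \dim_E\operatorname{coker}\gamma^1$ and then $\dim_E(J/\tilde{\mathfrak{m}}J) = \dim_E\ker\delta + \dim_E\operatorname{im}\delta$, rather than injectivity into a direct sum.

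As to the comparison: the paper avoids both issues by working only with the primary obstruction. It constructs the single linear map $\delta:(J/\tilde{\mathfrak{m}}J)^*\to\ker\gamma^2$ from the $2$-cocycle $c$ and then shows directly, by a tangent-space argument, that $\ker\delta$ is contained in $(\ker\eta)^*$: if $u\in\ker\delta$ then $\tilde{R}_u\cong\rbfs\oplus I_u$, the tangent map $\tilde{R}_u\to\rbfs$ fails to be an isomorphism, and hence $u$ factors through $I=\ker\eta$. Rank-nullity then gives the bound at once. Your route instead strengthens the obstruction problem (lift as $\rbs$-algebra framed deformation), proves the resulting map on $(J/\tilde{\mathfrak{m}}J)^*$ is injective, and bounds the target via the surjection $\operatorname{coker}\eta^*\twoheadrightarrow\operatorname{coker}\gamma^1$ and duality $\dim_E\operatorname{coker}\eta^*=\dim_E\ker\eta$. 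Both are legitimate; the paper's has fewer moving parts, while yours makes explicit the Wiles--Greenberg style decomposition $\ker\gamma^2\oplus\operatorname{coker}\gamma^1$ that feeds more transparently into the Euler-characteristic computation in the following proposition. If you repair the injectivity argument as above, your approach is sound.
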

\begin{proof}
We follow the proof of proposition 4.1.4 of \cite{Kis2} very closely.  Our situtation is simpler as all rings are equicharacteristic zero.

The number of generators of $\rbfs$ over $\rbs$ is equal to the $E$-dimension of the tangent space of $\rbfs / \mbs$.  The tangent space of this ring is naturally equal to $\mbfs / (\mbfss, \mbs) = coker(\eta)$.  This proves the result about the number of generators.  For ease of notation let $h = dim_E(coker(\eta))$.  Hence there is a surjection
$$\tilde{R} := \rbs [[x_1,  . . . x_h]] \rightarrow \rbfs,$$
which by construction induces a surjection on tangent spaces whose kernel is naturally isomorphic to $ker(\eta)$.   Let $J$ be the kernel of this surjection.  We wish to bound the number of generators of $J$.  Write $\tilde{m}$ for the maximal ideal of $\tilde{R}$. Nakayama's lemma tells us that the minimal number of generators of $J$ is 
equal to the $dim_E(J/\tilde{m}J)$.  Let 

$$\rho: G_{F,S} \rightarrow GL_d(\rbfs)$$
denote the universal framed deformation and consider a continuous set theoretic lifting 

$$\tilde{\rho} : G_{F,S} \rightarrow GL_d(\tilde{R}/ \tilde{m}J)$$
of $\rho$. Such a lifting is always possible.  Define a 2-cocycle

$$c: G^2_{F,S} \rightarrow J/\tilde{m}J \otimes_E adV_E; \;\;\; c(g_1, g_2) = \tilde{\rho}(g_1g_2)\tilde{\rho}(g_2)^{-1}\tilde{\rho}(g_1)^{-1}.$$
Here we are naturally embedding the kernel of $GL_d(\tilde{R}/ \tilde{m}J) \rightarrow GL_d(\rbfs)$ in $J/\tilde{m}J \otimes_E adV_E$.

The class $[c]$ of $c$ in $H^2(G_{F,S}, adV_E)\otimes_E J/\tilde{m}J$ depends only on $\rho$ and not $\tilde{\rho}$ .  It vanishes if and only if $\tilde{\rho}$ can be chosen to be a homomorphism.  By construction it it clear that for each $\textit{v} \in \Sigma$, $\rho|_{\gv}$ may be lifted to $GL_d(\rbv)$ and hence $GL_d(\tilde{R})$.  We deduce that the image of $c$ in $H^2(\gv, adV_E)\otimes_E J/\tilde{m}J = 0$
Hence $[c] \in ker(\gamma^2) \otimes_E J/\tilde{m}J$.  Hence if $(J/\tilde{m}J)^*$ is the $E$-dual of $J/\tilde{m}J$ we get the natural map

$$\delta: (J/\tilde{m}J)^* \rightarrow ker(\gamma^2); \;\;\; u \rightarrow \langle [c], u\rangle.$$
For ease of notation let us write $I = ker\eta$.  Observe that $(J/\tilde{m}J)$ surjects onto $I \subset \tilde{m}/\tilde{m}^2$ and hence we get an inclusion $I^* \subset (J/\tilde{m}J)^*$.  We claim that $I^*$ contains the kernel of $\delta$.

Suppose that $0 \neq u \in (J/\tilde{m}J)^*$ is in $ker\delta$.  Let $\tilde{R}_u$ be the push-out of $\tilde{R}/\tilde{m}J$ by $u$.  Hence $\rbfs \cong \tilde{R_u}/ I_u$ where $I_u \subset \tilde{R}_u$ is an ideal of square zero, which is isomorphic to $E$ as an $\tilde{R_u}$-module.  Since $u \in ker\delta$, we know that $\rho$ lifts to a representation $\rho_u : G_{F,S} \rightarrow GL_d(\tilde{R}_u)$.  Hence the natural map $\tilde{R}_u \rightarrow \rbfs$ has a section by the universal property of $\rbfs$.  Hence $\tilde{R}_u \cong \rbfs \oplus I_u$.  In particular the map  $\tilde{R}_u \rightarrow \rbfs$ does not induce an isomorphism on tangent.  We deduce that the composite

$$ker( J/\tilde{m}J \rightarrow I) \rightarrow J/\tilde{m}J \rightarrow I_u$$
is not surjective, and hence is zero.  We conclude that $u$ factors through $I$.
By rank nullity we deduce that 

$$dim_E(J/\tilde{m}J)^* \leq dim_EI + dim_E(ker\gamma^2) = dim_E(ker\eta)+h_{\Sigma}^2.$$

\end{proof}

\begin{prop}
Suppose that $\Sigma$ contains all places of $F$ dividing $p$ and $S$ contains all places dividing $\infty$.  Let us also assume that the map

$$\theta : H^0(G_{F, S}, (adV_E)^*(1)) \rightarrow \prod_{\textit{v} | \infty} \hat{H}^0(\gv,  (adV_E)^*(1)) \times  \prod_{\textit{v} \in (S \backslash \Sigma)_f} H^0(\gv,  (adV_E)^*(1))$$
is injective, where $(S \backslash \Sigma)_f$ denotes the finite places and $\hat{H}^0(\gv,  (adV_E)^*(1)) $ is the usual cohomology group modulo the subgroup of norms. We note that this condition is automatically satisfied if $(S \backslash \Sigma)_f\neq \emptyset$. 

Then there exists a positive integer $r$ such that 
$$\rbfs = \rbs[[x_1, ..., x_r]] / (f_1, ..., f_{r+s}).$$
Where $s = \sum_{\textit{v} | \infty, \textit{v} \notin \Sigma} dim_E(H^0(\gv, adV_E)).$
\end{prop}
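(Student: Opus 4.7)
The plan is to extend Proposition 14 by pinning down the relation count exactly, using local and global Euler--Poincar\'e characteristic formulae together with Poitou--Tate duality to exploit the $\theta$-hypothesis. Proposition 14 furnishes a presentation $\rbfs = \rbs[[x_1,\ldots,x_r]]/(f_1,\ldots,f_N)$ with $r = \dim_E(\mathrm{coker}\,\eta)$ and $N \leq \dim_E(\ker \eta) + h^2_\Sigma$. Padding with zero relations, it suffices to prove
$$h^2_\Sigma + \dim_E(\mbs/\mbss) - \dim_E(\mbfs/\mbfss) \leq s.$$
Set $M = adV_E$ and $d = \dim_E V_E$. By Lemma 1 applied both globally to $\rbfs$ and locally to each tensor factor $\rbv$ of $\rbs$, the $|\Sigma|d^2$ framing contributions cancel in the difference, reducing it to a combination of the $h^i(\gv,M)$ for $v\in\Sigma$ and the $h^i(G_{F,S},M)$. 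I then apply the local Euler--Poincar\'e formula at each $v\in\Sigma$ (Euler characteristic $-[F_v:\qp]d^2$ for $v\mid p$, zero for finite $v\nmid p$, and all higher cohomology vanishing for archimedean $v$ in characteristic zero); since $\Sigma$ contains all $v\mid p$, the sum $\sum_{v\mid p}[F_v:\qp]d^2 = [F:\q]d^2$ cancels against the corresponding term in the global Euler--Poincar\'e formula
$$h^0(G_{F,S},M) - h^1(G_{F,S},M) + h^2(G_{F,S},M) = \sum_{v\mid\infty}h^0(\gv,M) - [F:\q]d^2,$$
and the remaining archimedean $h^0$'s reassemble into $s$. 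The identity
$$\dim_E(\mbs/\mbss) - \dim_E(\mbfs/\mbfss) = \sum_{v\in\Sigma}h^2(\gv,M) - h^2(G_{F,S},M) + s$$
emerges, so that after using $h^2_\Sigma = h^2(G_{F,S},M) - \dim_E(\mathrm{image}(\gamma^2))$ the target inequality reduces to $\dim_E(\mathrm{image}(\gamma^2)) \geq \sum_{v\in\Sigma} h^2(\gv,M)$, i.e.\ the surjectivity of $\gamma^2$.

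For this surjectivity I would invoke Poitou--Tate duality. In characteristic zero the archimedean terms drop out of the nine-term sequence (since $|G_v|$ is invertible in $E$, both $H^{\geq 1}(\gv,M)$ and $\hat H^0(\gv,M)$ vanish archimedean), leaving
$$H^2(G_{F,S},M) \to \bigoplus_{v\in S_f}H^2(\gv,M) \xrightarrow{\beta} H^0(G_{F,S},M^*(1))^* \to 0,$$
where $S_f$ denotes the finite places of $S$ and, via local Tate duality $H^2(\gv,M) \cong H^0(\gv,M^*(1))^*$, the map $\beta$ is dual to the restriction $H^0(G_{F,S},M^*(1)) \to \bigoplus_{v\in S_f}H^0(\gv,M^*(1))$. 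A short diagram chase shows that $\gamma^2$ is surjective iff $\beta$ restricted to $\bigoplus_{v\in (S\setminus\Sigma)_f}H^2(\gv,M)$ surjects onto $H^0(G_{F,S},M^*(1))^*$; dualising, this is injectivity of $H^0(G_{F,S},M^*(1)) \to \prod_{v\in (S\setminus\Sigma)_f}H^0(\gv,M^*(1))$, which, since the archimedean $\hat H^0$ terms vanish in characteristic zero, is exactly the hypothesis on $\theta$. The auto-satisfaction remark when $(S\setminus\Sigma)_f\neq\emptyset$ follows because, for any such $v$, the restriction $(M^*(1))^{G_{F,S}}\hookrightarrow (M^*(1))^{\gv}$ along $\gv\subset G_{F,S}$ is trivially injective.

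The principal obstacle is bookkeeping: threading the local Euler--Poincar\'e contributions (at $p$-adic, tame finite, and archimedean places) through the global formula so that the $[F:\q]d^2$ and framing terms cancel cleanly, and so that the archimedean $h^0$'s assemble precisely into $s$. Once that cohomological identity is in place, the Poitou--Tate translation of the $\theta$-injectivity hypothesis into the surjectivity of $\gamma^2$ is essentially formal.
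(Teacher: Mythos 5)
Your proposal is correct and follows essentially the same route as the paper: reduce via Proposition 13 and the tangent-space dimension formulas to a local/global Euler characteristic computation, then invoke the last three terms of the Poitou--Tate sequence together with local Tate duality to show that the $\theta$-hypothesis is equivalent to surjectivity of $\gamma^2$, which gives the needed identity $h^2_\Sigma = h^2(G_{F,S},adV_E) - \sum_{v\in\Sigma}h^2(\gv,adV_E)$. You spell out the Poitou--Tate diagram chase in somewhat more detail than the paper's one-line assertion, but the substance is the same.
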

\begin{proof}
We have the following series of natural equalities:

\begin{align*}
 dim_E(\mbfs/ \mbfss) &= dim_E(\mathcal{D}^{\Box}_{F,S}(E[\epsilon])) \\ 
 &=d^2 |\Sigma | + dim_E(H^1(G_{F, S}, adV_E)) - dim_E(H^0(G_{F, S}, adV_E)).
\end{align*}
Similarly we have 

\begin{align*}
dim_E(\mbs/ \mbss) &= \sum_{\textit{v} \in \Sigma} dim_E(\mathcal{D}^{\Box}_{\textit{v}}(E[\epsilon])) \\
&= d^2|\Sigma |  + \sum_{\textit{v} \in \Sigma}[dim_E(H^1(\gv, adV_E)) - dim_E(H^0(\gv, adV_E)].
\end{align*}
Local Tate duality, the final three terms of the Poitou-Tate sequence and the injectivity of $\theta$ imply that we have the equality

$$h^2_{\Sigma} = dim_E(H^2(G_{F, S}, adV_E)) - \sum_{\textit{v} \in \Sigma} dim_E(H^2(\gv, adV_E))$$

By proposition 13 we know that we may convert the information about the number of generators and relations into statements about local and global Euler characteristics.  If $\eta$ is the map defined in proposition 13 then we observe that 

\begin{align*}
s &=  dim_E(ker\eta) + h^2_{\Sigma} -dim_E(coker\eta)  \\
&= dim_E(\mbs/ \mbss) - dim_E(\mbfs/ \mbfss) \\
&\;\;\;\; + dim_E(H^2(G_{F, S}, adV_E)) - \sum_{\textit{v} \in \Sigma} dim_E(H^2(\gv, adV_E)) \\
&= \chi(G_{F, S}, adV_E) - \sum_{\textit{v} \in \Sigma} \chi(\gv, adV_E)
\end{align*}
Now we may apply Tate's results on local and global euler characteristic formulae.  Hence we get the equality

$$s = \sum_{\textit{v} | \infty, \textit{v} \notin \Sigma} dim_E(H^0(\gv, adV_E)).$$
\end{proof}

\subsection{Global Applications}

Let $\f$ be a finite field of characteristic $p$.  Let $S$ be a finite set of places of $\mathbb{Q}$ containing $p$ and $\infty$.  Fix an embedding $\bar{\q}\subset \bar{\q}_p$.  Let $\vf$ be a two dimensional $\f$-vector space equipped with a continuous, odd action of $G_{\mathbb{Q}, S}$.  Furthermore, assume that $end_{\f[G_{\mathbb{Q}, S}]}\vf = \f$.  This ensures that the deformation functor $\mathcal{D}_{\vf}$ is pro-representable with universal noetherian deformation ring $\rf\in ob(\arwf)$  Let $\xff$ denote the associated universal deformation space of $\vf$. Following \S11 of \cite{Kis4}  we may construct a Zariski closed subspace $X_{fs} \subset \xff \times \mathbb{G}_m$, which we may view as a Galois theoretic analogue of the Coleman-Mazur eigencurve (\cite{CM}, \cite{KB}) of the appropriate tame level.  The construction of this subspace is rather technical and we refer the reader to \cite{Kis4} for a precise definition.   The following crucial property of $X_{fs}$ allows one to use Galois deformation theoretic techniques to study its local geometry:

\begin{thm}
Let $(x, \lambda) \in X_{fs}$ have residue field $E$.  Let $V_x$ denote the induced $E$-vector space together with it's natural action of $G_{\mathbb{Q}, S}$. Let $V^p_x$ denote the restriction of this representation to $\gqp$.  Assume that the generalised Hodge-Tate weights of $V_x^p$ are 0 and $k \notin -\mathbb{N} \cup \{0\}$.  Then there exists a non-zero vector

$$v_h \in D_{cris}^+(V_x^p)^{\varphi = \lambda}$$
Furthermore,  the complete local ring $\hat{\mathcal{O}}_{X_{fs}, (x, \lambda)}\in \are$ pro-represents the functor

$$\mathcal{D}_{V_x} \times_{\mathcal{D}_{V_x^p}} \mathcal{D}_{V_x^p}^{h, \varphi}.$$
\end{thm}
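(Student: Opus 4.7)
The plan is to extract both assertions essentially from Kisin's construction of $X_{fs}$ in \S11 of \cite{Kis4}, where $X_{fs}$ is built as a Zariski closed subspace of $\xff \times \mathbb{G}_m$ whose $\cp$-points parametrize pairs $(V, \lambda)$ such that $V|_{\gqp}$ admits a non-zero crystalline period of $\varphi$-eigenvalue $\lambda$. The first assertion is then almost immediate from this defining property: for $(x, \lambda) \in X_{fs}$, there is a non-zero $v_h \in D_{cris}(V_x^p)^{\varphi = \lambda}$, and the hypothesis that the generalised Hodge-Tate weights of $V_x^p$ are $0$ and $k \notin -\mathbb{N}\cup\{0\}$ forces all jumps of the Hodge filtration to lie in non-positive degree, so $v_h$ in fact lives in $D_{cris}^+(V_x^p)^{\varphi = \lambda}$ --- this is the same mechanism that yields lemma 3.2 of \cite{Kis4}.

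For the second assertion, the strategy is to verify universality directly. Given $A \in \are$, an $A$-valued point of $\hat{\mathcal{O}}_{X_{fs}, (x,\lambda)}$ is the datum of a deformation $V_A$ of $V_x$ to $A$ together with a lift $\tilde{\lambda} \in A^*$ of $\lambda$ such that there exists a non-zero $\tilde{v}_h \in D_{cris}^+(V_A|_{\gqp})^{\varphi = \tilde{\lambda}}$ lifting $v_h$. The deformation $V_A$ is precisely an element of $\mathcal{D}_{V_x}(A)$. Its restriction $V_A|_{\gqp}$, together with the existence of the pair $(\tilde{\lambda}, \tilde{v}_h)$, gives --- by proposition 8.12 of \cite{Kis4}, which shows $\tilde{\lambda}$ is determined by $\tilde{v}_h$ up to $A^{*}$ --- an element of $\mathcal{D}_{V_x^p}^{h, \varphi}(A)$. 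These two data have the same image in $\mathcal{D}_{V_x^p}(A)$ tautologically, producing an element of the fiber product.

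Conversely, given a compatible pair $(V_A, V_A^{\mathrm{loc}})$ in the fiber product, the local object $V_A^{\mathrm{loc}} = V_A|_{\gqp}$ carries the data of a period lift, from which one extracts a specific $\tilde{\lambda}$ and $\tilde{v}_h$. The universal property of $X_{fs}$ as the subspace of $\xff \times \mathbb{G}_m$ cut out by the non-vanishing of Kisin's canonical section then produces the required map $\hat{\mathcal{O}}_{X_{fs}, (x,\lambda)} \to A$, and functoriality in $A$ is automatic. The main obstacle is a careful bookkeeping check that Kisin's scheme-theoretic definition of $X_{fs}$ indeed makes $\hat{\mathcal{O}}_{X_{fs}, (x,\lambda)}$ pro-represent the deformation problem described above, rather than a coarser variant that ignores the condition $\tilde{\lambda} \in A^{*}$ or the existence of the period lift; once this identification is in place --- it is essentially the content of \S 8 and \S 11 of \cite{Kis4} --- the identification with the fiber product is a formal manipulation.
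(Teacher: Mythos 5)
Your proposal takes essentially the same route as the paper: both assertions are read off from Kisin's construction of $X_{fs}$ in \S11 of \cite{Kis4} (the paper cites theorems 11.2 and 11.3 there explicitly, and notes that the Hodge--Tate restriction on $V_x^p$ is exactly condition (11.2)(1)). The only thing the paper is careful to flag that you gloss over is the \emph{dualization}: Kisin's condition is phrased as the existence of a non-zero $\gqp$-equivariant map $h\colon V_E^*\to(\bc^+\otimes_{\qp}E)^{\varphi=\lambda}$, and the present paper works throughout with the dual convention, so that this translates into a period $v_h\in D_{cris}^+(V_x^p)^{\varphi=\lambda}$ of the representation itself rather than of its dual. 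Your description of $X_{fs}$ as parametrizing representations admitting a crystalline period is already the dualized statement, which is fine as long as you register that the translation is happening --- otherwise a reader comparing your proof against \cite{Kis4} will not find the periods where you claim they are. With that bookkeeping made explicit, the rest of your argument (pro-representability via the fiber product $\mathcal{D}_{V_x}\times_{\mathcal{D}_{V_x^p}}\mathcal{D}_{V_x^p}^{h,\varphi}$, using proposition 8.12 to handle the ambiguity in $\tilde{\lambda}$) matches the paper.
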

\begin{proof}
First note that we are taking the dual conventions to \cite{Kis4}.  Hence the existence of $v_h$ is equivalent to the existence of a non-zero, $\gqp$-equivariant $E$-linear morphism 
 $$h:  V_E^* \rightarrow (\bc^+ \otimes_{\qp} E)^{\varphi = \lambda}.$$
 
The result then follows by a  combination of theorems 11.2 and 11.3 of \cite{Kis4}, after observing that condition (11.2)(1) is the same as our restriction on the generalised Hodge Tate weights of $V_x^p$.
\end{proof}
\noindent
Let $(x, \lambda) \in X_{fs}$ .  We say that $(x, \lambda)$ satisfies $(\ddagger)$  if
\begin{enumerate}
\item $V_x$ and $V_x^p$ are absolutely irreducible.
\item $V_x^p$ has generalised Hodge-Tate weights $0$ and $k\notin -\mathbb{N}\cup \{0\}$.
\item $V_x^p$ is not cristalline.

\end{enumerate}
We should observe that under these assumptions the associated trianguline $(\varphi, \Gamma)$-module satisfies:  $$0 \rightarrow \mathcal{R}_E(\delta_1) \longrightarrow D(V_x^p) \longrightarrow \mathcal{R}_E(\delta_2) \longrightarrow 0,$$ 
where  $ \delta_1\delta_2^{-1} \neq x^{\mathbb{Z}}, \chi x^{\mathbb{N}}.$
Also note that, by proposition 12, under these assumptions the deformation functor $\mathcal{D}_{V_x^p}^{h, \varphi}$ is pro-representable by a smooth complete local noetherian ring of dimension 3.  Furthermore, by corollary 1, $\mathcal{D}_{V_x^p}^{\Box, h}$ is pro-represented by a smooth complete local noetherian ring of dimension 6, denoted $R_{V_x^p}^{\Box, h}$.

\subsection*{The Framed $X_{fs}$ Space}
Assume that $S$ now contains the prime $l$, where $l$ and $p$ are distinct.  Fix an embedding $\bar{\q}\subset \bar{\q}_l$.   Let $\Sigma= \{l, p\}$ and assume that $S \setminus \{l,p, \infty\} \neq \emptyset$. As in \S4.1, we define the deformation functor $\mathcal{D}_{\q, S}^{\Box}$ which to every $A \in \arwf$ assigns the set of isomorphism classes of tuples $(V_A, \{\beta_{\vi, A}\}_{\vi \in \Sigma})$, where $V_A$ is a deformation of $\vf$ as a $G_{\mathbb{Q}, S}$ representation and $\beta_{\vi, A}$ is an $A$-basis.  This functor is always pro-representable and we denote the universal deformation space by $\xfb$.
\\ \\
\noindent
There is a natural surjective morphism of rigid spaces $\xfb\rightarrow \xff$, induced by the forgetful functor.  This map is formally smooth of relative dimension 7.  We naturally get a morphism $\xf\times \mathbb{G}_m \rightarrow \xff\times \mathbb{G}_m$, acting as the identity on $\mathbb{G}_m$.  This allows us to form the fibre product:

$$\xymatrix{ X_{fs}^{\Box} \ar[d]\ar[r] &\xf\times \mathbb{G}_m \ar[d] \\
X_{fs}\ar[r]&\xff\times \mathbb{G}_m}$$

The natural morphism $X_{fs}^{\Box} \rightarrow X_{fs}$ is surjective and formally smooth of relative dimension 7.  
\\ \\
\noindent
Let $V_{\mathbb{F}}^l$ denote the restriction of $\vf$ to the decomposition group at $l$.  Let $\xflb$ denote the associated universal framed (one framing) deformation space.
Note there is a natural functorial morphism $\xf \rightarrow \xflb$.  This induces a canonical morphism $X_{fs}^{\Box}\rightarrow \xflb$.  Let $(x, f)\in X_{fs}^{\Box}$ with residue field $E$.  Let $x' \in \xflb$ be the image of $(x, \lambda)$ under the above morphism.  Let $R_x'$ denote the complete local ring at $x'$.  By construction there is a natural embedding $k(x')\subset E$, where $k(x')$ is  residue field at $x'$. Let us write $V_x^l$ for the restriction of  $V_x$ to $\gql$.   We write $R_{V_{x}^l}^{\Box}: = R_{x'}\hat{\otimes}_{k(x')}E \in \are$.
By \S2 this ring pro-represents the usual framed deformation functor associated to $V_x^l$.
\\ \\
\noindent
We say that $(x, f)\in X_{fs}^{\Box}$ satisfies $(\ddagger)$ if its image in $X_{fs}$ does.
\begin{prop}
Let $(x, f)\in X_{fs}^{\Box}$ have residue field $E$.  Assume that $(x, \lambda)$ satisfies $(\ddagger)$, then 

$$\hat{\mathcal{O}}_{X_{fs}^{\Box}, (x, \lambda)} \cong (R_{V_x^p}^{\Box, h} \hat{\otimes}_E R_{V_{x}^l}^{\Box})[[x_1, \cdots, x_r]] / (f_1, \cdots, f_{r+2}).$$

\end{prop}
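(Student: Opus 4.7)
The plan is to combine Theorem 6 with Proposition 14, applied at $\Sigma=\{l,p\}$, and then match up pro-representing rings. First I would identify the deformation problem pro-represented by $\hat{\mathcal{O}}_{X_{fs}^{\Box},(x,\lambda)}$. Since $X_{fs}^{\Box}$ is defined as the rigid-analytic fibre product of $X_{fs}$ with $\xf\times\mathbb{G}_m$ over $\xff\times\mathbb{G}_m$, the $\mathbb{G}_m$-factor cancels and passing to complete local rings yields
\[
\hat{\mathcal{O}}_{X_{fs}^{\Box},(x,\lambda)} \;\cong\; \hat{\mathcal{O}}_{X_{fs},(x,\lambda)} \,\hat{\otimes}_{\hat{\mathcal{O}}_{\xff,x}}\, \hat{\mathcal{O}}_{\xf,x}.
\]
By Theorem 6 the first factor pro-represents $\mathcal{D}_{V_x}\times_{\mathcal{D}_{V_x^p}}\mathcal{D}^{h,\varphi}_{V_x^p}$, while $\hat{\mathcal{O}}_{\xf,x}$ pro-represents the global framed functor $\mathcal{D}^{\Box}_{V_x}$ with framings at $\Sigma$. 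Using that $\mathcal{D}^{\Box}_{V_x^p}\to\mathcal{D}_{V_x^p}$ is formally smooth, the $p$-framing may be inserted on both sides of the inner fibre product without changing it, and the whole object is naturally isomorphic to $\mathcal{D}^{\Box}_{V_x}\times_{\mathcal{D}^{\Box}_{V_x^p}}\mathcal{D}^{\Box,h}_{V_x^p}$. Its pro-representing ring is therefore $\rbfs\,\hat{\otimes}_{R^{\Box}_{V_x^p}}\,R^{\Box,h}_{V_x^p}$, where $\rbfs$ is the framed global deformation ring of $V_x$ at $\Sigma$.

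Next I would apply Proposition 14 with $F=\mathbb{Q}$ and $\Sigma=\{l,p\}$. The injectivity hypothesis on $\theta$ is automatic, since $(S\setminus\Sigma)_f\supseteq S\setminus\{l,p,\infty\}$ is non-empty by assumption on $S$. The output is a presentation
\[
\rbfs \;\cong\; (R^{\Box}_{V_x^l}\,\hat{\otimes}_E\, R^{\Box}_{V_x^p})[[x_1,\ldots,x_r]]/(f_1,\ldots,f_{r+s}),
\]
with $s = \dim_E H^0(G_\infty,\mathrm{ad}\,V_x)$. Because $\vf$ is odd and $V_x$ is a characteristic-zero lift, complex conjugation acts on $V_x$ with eigenvalues $+1$ and $-1$; in a diagonalising basis the matrix units $E_{11},E_{22}$ are fixed while $E_{12},E_{21}$ are antiinvariant, so $(\mathrm{ad}\,V_x)^{G_\infty}$ has dimension $2$ and $s=2$.

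Finally I would substitute the presentation for $\rbfs$ into the completed tensor product of the first step. Since $R^{\Box}_{V_x^p}$ appears as one tensor factor of the base ring $R^{\Box}_{V_x^l}\,\hat{\otimes}_E\, R^{\Box}_{V_x^p}$, base-changing along $R^{\Box}_{V_x^p}\to R^{\Box,h}_{V_x^p}$ simply replaces that factor by $R^{\Box,h}_{V_x^p}$; completed base change commutes with both adjoining finitely many formal variables and quotienting by a finitely generated ideal. The result is the claimed isomorphism. The principal technical obstacle is the first step: one must verify carefully that the rigid-analytic fibre product defining $X_{fs}^{\Box}$ really corresponds, at the level of complete local rings, to the asserted fibre product of deformation functors, and that the framing at $p$ passes cleanly across that fibre product via the formally smooth morphism $\mathcal{D}^{\Box}_{V_x^p}\to\mathcal{D}_{V_x^p}$. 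Given that, the remaining arguments reduce to a direct invocation of Proposition 14 together with an elementary dimension count at infinity.
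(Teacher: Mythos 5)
Your proof is correct and arrives at the same structure as the paper's, but it derives the key identification of the complete local ring of $X_{fs}^{\Box}$ with the framed fibre-product functor somewhat differently. The paper simply asserts that ``exactly the same proof as in the unframed case'' (i.e.\ a framed rerun of Theorem 6) shows that $\hat{\mathcal{O}}_{X_{fs}^{\Box},(x,\lambda)}$ pro-represents $\mathcal{D}_{V_x}^{\Box}\times_{\mathcal{D}_{V_x^p}^{\Box}}\mathcal{D}_{V_x^p}^{\Box,h}$. You instead deduce this from the already-proven unframed statement (Theorem 6) by passing to complete local rings across the rigid-analytic fibre square defining $X_{fs}^{\Box}$, cancelling the $\mathbb{G}_m$-factor, and then manipulating the resulting fibre product of functors. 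Your route has the virtue of not appealing to an unexplained ``same proof'' --- the framed statement falls out of the unframed one plus a formal completed-tensor-product computation --- at the cost of having to justify (as you flag) that the rigid fibre product does indeed correspond on complete local rings to the completed tensor product, which is a standard but nontrivial fact in rigid geometry. One small simplification: the identity
\[
\mathcal{D}^{\Box}_{V_x}\times_{\mathcal{D}_{V_x^p}}\mathcal{D}^{h,\varphi}_{V_x^p}\;\cong\;\mathcal{D}^{\Box}_{V_x}\times_{\mathcal{D}^{\Box}_{V_x^p}}\mathcal{D}^{\Box,h}_{V_x^p}
\]
needs no appeal to formal smoothness of $\mathcal{D}^{\Box}_{V_x^p}\to\mathcal{D}_{V_x^p}$: it is pure associativity of fibre products, since by definition $\mathcal{D}^{\Box,h}_{V_x^p}=\mathcal{D}^{\Box}_{V_x^p}\times_{\mathcal{D}_{V_x^p}}\mathcal{D}^{h,\varphi}_{V_x^p}$ and the map $\mathcal{D}^{\Box}_{V_x}\to\mathcal{D}_{V_x^p}$ factors through $\mathcal{D}^{\Box}_{V_x^p}$ via the $p$-framing. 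The remainder --- Proposition 14 applied with $F=\mathbb{Q}$, $\Sigma=\{l,p\}$, the observation that $\theta$ is injective because $S\setminus\{l,p,\infty\}\neq\emptyset$, and the computation $s=\dim_E(\mathrm{ad}\,V_x)^{G_\infty}=2$ from oddness --- matches the paper exactly.
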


\begin{proof}
Let $\mathcal{D}_{V_x}^{\Box}$ denote the functor which assigns to every $A\in ob(\are)$  the set of isomorphism classes of tuples $(V_A, \{\beta_{\vi, A}\}_{\vi \in \Sigma})$, where $V_A$ is a deformation of $V_x$ as a $G_{\mathbb{Q}, S}$ representation and $\beta_{\vi, A}$ is an $A$-basis.  

Let $\mathcal{D}_{V_x^p}^{\Box}$ the the usual framed deformation functor of $V_x^p$. By usual, we mean with only one framing. This functor is pro-represented by a complete local noetherian ring , which, as usual, we denote by $R_{V_x^p}^{\Box}$.   Observe that there is a natural transformation: $\mathcal{D}_{V_x}^{\Box} \rightarrow \mathcal{D}_{V_x^p}^{\Box}$.  There is also a forgetful natural transformation: $\mathcal{D}_{V_x^p}^{\Box, h}\rightarrow \mathcal{D}_{V_x^p}^{\Box}$.  Exactly the same proof as in the unframed case shows that because of $(\ddagger)$ the complete local ring at $(x, \lambda)$ pro-represents the functor:

$$\mathcal{D}_{V_x}^{\Box} \times_{\mathcal{D}_{V_x^p}^{\Box}}\mathcal{D}_{V_x^p}^{\Box,h}.$$
If the $\mathcal{D}_{V_x^l}^{\Box}$  is the usual framed deformation functor of $V_x^l$, then we denote its universal noetherian complete local ring by $R_{V_x^l}^{\Box}$.  Observe that by assumption $S\setminus \{l,p, \infty\} \neq \emptyset$.  Hence, by proposition 14, we know that $\mathcal{D}_{V_x}^{\Box}$ is pro-represented by a complete local ring of the form

$$R_{V_x}^{\Box} = (R_{V_x^p}^{\Box} \hat{\otimes}_E R_{V_{x}^l}^{\Box})[[x_1, \cdots, x_r]] / (f_1, \cdots, f_{r+s}),$$
where $s =  dim_E(H^0(\gv, adV_x))$ and $v = \infty$. Because $V_x$ is odd this implies that $s =2$.  We deduce that $$\mathcal{D}_{V_x}^{\Box} \times_{\mathcal{D}_{V_x^p}^{\Box}}\mathcal{D}_{V_x^p}^{\Box,h}$$
is pro-represented by a ring of the from:
$(R_{V_x^p}^{\Box, h} \hat{\otimes}_E R_{V_{x}^l}^{\Box})[[x_1, \cdots, x_r]] / (f_1, \cdots, f_{r+2}).$
 Combining all of the above we deduce that 
 $$\hat{\mathcal{O}}_{X_{fs}^{\Box}, (x, \lambda)} \cong (R_{V_x^p}^{\Box, h} \hat{\otimes}_E R_{V_{x}^l}^{\Box})[[x_1, \cdots, x_r]] / (f_1, \cdots, f_{r+2}).$$

\end{proof}

\subsection*{Level Lowering and Level Raising on $X_{fs}$}
Recall that we have the closed subspaces:
$$\xflbsp\subset \xflb$$
and
$$\xflbn\subset \xflb.$$
\noindent
Recall that to any  $x \in \xflb$ we may associate a 2-dimensional Frobenius semi-simple Weil-Deligne representation $(\rho_x, N_x)$.  As in \S 1, let  $\pi$ denote the Tate normalised local Langlands correspondence.  Recall that $x\in \xflbsp$ if and only if $\pi(\rho_x, N_x)$ is a subquotient of a reducible principal series representation.  
\\ \\
\noindent
Similarly, recall that $x \in \xflbn$ if any only if $N_x=0$.  Note that $\xflbsp$ and $\xflbn$ intersect precisely when $\pi(\rho_x, N_x)$ is one dimensional.  
\\ \\
\noindent 
We form the following fibre products:
$$\xymatrix{ X_{fs}^{\Box, sp} \ar[d]\ar[r] &\xflbsp \times \mathbb{G}_m \ar[d] \\
X_{fs}^{\Box}\ar[r]&\xff\times \mathbb{G}_m}$$
and
$$\xymatrix{ X_{fs}^{\Box, N=0} \ar[d]\ar[r] &\xflbn \times \mathbb{G}_m \ar[d] \\
X_{fs}^{\Box}\ar[r]&\xff\times \mathbb{G}_m}$$
Let $X_{fs}^{sp}$ denote the image of the composition $\xflbsp\rightarrow X_{fs}^{\Box}\rightarrow X_{fs}$. Similarly let $X_{fs}^{N=0}$ denote the image of the composition  $\xflbn\rightarrow X_{fs}^{\Box}\rightarrow X_{fs}$
\\ \\
\noindent
To summarise we have the following diagram:

$$\xymatrix{ X_{fs}^{\Box, sp} \ar[d]\ar[r] &X_{fs}^{\Box}\ar[d]  &X_{fs}^{\Box, N=0} \ar[d]\ar[l]\\
X_{fs}^{sp }\ar[r]&X_{fs} &X_{fs}^{N=0}\ar[l] }$$
All vertical arrows are surjective  and at points satisfying $(\ddagger)$ are of relative dimension $7$. All horizontal arrows are closed embeddings.
\\ \\
\noindent
Let $(x, \lambda) \in X_{fs}^{\Box, sp}$, with residue field $E$.  Let $x' \in \xflbsp$ denote the image of $(x, \lambda)$ under the natural projection.  Let $R_{x'}^{sp}$ be the complete local ring at $x'$.  As before there is a natural embedding $k(x')\subset E$, where $k(x')$ is the residue field at $x'$.  We form the complete tensor product $R_{V_x^l}^{\Box, sp}: = R_{x'}^{sp}\hat{\otimes}_{k(x')}E \in \are$.

\begin{prop}
If $(x, \lambda) \in X_{fs}^{\Box, sp}$ satisfies $(\ddagger)$, then there is an isomorphism (in $\are$)

$$\hat{\mathcal{O}}_{X_{fs}^{\Box, sp}, (x, \lambda)} \cong (R_{V_x^p}^{\Box, h} \hat{\otimes}_E R_{V_{x}^l}^{\Box, sp})[[x_1, \cdots, x_r]] / (f_1, \cdots, f_{r+2}). $$
\end{prop}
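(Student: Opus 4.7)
The plan is to parallel the proof of proposition 15 closely, modifying only the local factor at $l$. The overall strategy is threefold: identify the deformation functor pro-represented by $\hat{\mathcal{O}}_{X_{fs}^{\Box, sp}, (x, \lambda)}$; invoke proposition 14 to present the global deformation ring over the local ones; and then impose the $h$-condition at $p$ by a fibre product.

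First I would unwind the fibre-product definition of $X_{fs}^{\Box, sp}$. Since $\xflbsp$ is a closed subspace of $\xflb$, its complete local ring $R_{V_x^l}^{\Box, sp}$ at the image point is a quotient of $R_{V_x^l}^{\Box}$. Using the universal property of framed deformation rings together with lemma 7.1.9 of \cite{DJ}, I would identify $\hat{\mathcal{O}}_{X_{fs}^{\Box, sp}, (x, \lambda)}$ as the pro-representing ring of the subfunctor
$$\mathcal{D}_{V_x}^{\Box, sp} \times_{\mathcal{D}_{V_x^p}^{\Box}} \mathcal{D}_{V_x^p}^{\Box, h}$$
of the functor pro-represented by $\hat{\mathcal{O}}_{X_{fs}^{\Box}, (x, \lambda)}$, where $\mathcal{D}_{V_x}^{\Box, sp}$ denotes deformations of $V_x$ with framings at $l$ and $p$ whose classifying morphism at $l$ factors through $R_{V_x^l}^{\Box, sp}$.

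Next I would repeat the cohomological analysis of proposition 15 verbatim. Since the global cohomology of $adV_x$ is unaltered by imposing an additional closed local condition at $l$, the generator and relation counts from proposition 14 are identical: oddness of $V_x$ still yields $s = 2$, and the hypothesis $S \setminus \{l, p, \infty\} \neq \emptyset$ still permits the application. This gives
$$R_{V_x}^{\Box, sp} \cong (R_{V_x^p}^{\Box} \hat{\otimes}_E R_{V_x^l}^{\Box, sp})[[x_1, \ldots, x_r]]/(f_1, \ldots, f_{r+2}).$$
Equivalently, one may derive this by base-changing the presentation of $R_{V_x}^{\Box}$ from the proof of proposition 15 along the surjection $R_{V_x^l}^{\Box} \twoheadrightarrow R_{V_x^l}^{\Box, sp}$, using that completed tensor products commute with both quotients and the adjunction of formal variables. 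Forming the fibre product with $\mathcal{D}_{V_x^p}^{\Box, h}$ over $\mathcal{D}_{V_x^p}^{\Box}$ then replaces the factor $R_{V_x^p}^{\Box}$ by $R_{V_x^p}^{\Box, h}$, yielding the asserted isomorphism.

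The one non-routine step is the first: verifying that the fibre-product definition of $X_{fs}^{\Box, sp}$ corresponds on complete local rings to the claimed base change along $R_{V_x^l}^{\Box} \twoheadrightarrow R_{V_x^l}^{\Box, sp}$. This should be a formal consequence of the universal property of framed deformation rings together with the correspondence between closed subspaces of rigid spaces and quotients of complete local rings, but it is the key point that needs to be recorded carefully. Once it is in place, the remainder of the argument is structurally identical to proposition 15, and no new deformation-theoretic or cohomological input is required.
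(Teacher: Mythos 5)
Your proposal is correct, and the route you flag with ``equivalently'' --- base-changing the presentation of $\hat{\mathcal{O}}_{X_{fs}^{\Box}, (x, \lambda)}$ from Proposition 15 along the surjection $R_{V_x^l}^{\Box} \twoheadrightarrow R_{V_x^l}^{\Box, sp}$, justified by the fibre-product definition of $X_{fs}^{\Box, sp}$ --- is exactly the paper's proof. The paper simply observes that $\hat{\mathcal{O}}_{X_{fs}^{\Box, sp}, (x, \lambda)} \cong \hat{\mathcal{O}}_{X_{fs}^{\Box}, (x, \lambda)} \hat{\otimes}_{R_{V_x^l}^{\Box}} R_{V_x^l}^{\Box, sp}$ and substitutes into Proposition 15's presentation. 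You correctly identify that verifying this base-change identity from the fibre-square definition is the only non-routine point.

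One imprecision in your primary route: you cannot literally ``apply Proposition 14'' to get a presentation of $R_{V_x}^{\Box, sp}$ directly over $R_{V_x^p}^{\Box} \hat{\otimes}_E R_{V_x^l}^{\Box, sp}$, because Proposition 14 as stated presents $R^{\Box}_{F,S}$ over the \emph{unrestricted} local framed rings $R^{\Box}_\Sigma$; there is no version with a prescribed closed local condition at $l$ built in, and the Poitou--Tate/Euler-characteristic bookkeeping in its proof is tied to the full local $H^i$. What actually holds is precisely your alternative derivation: take the presentation over the full $R_{V_x^l}^{\Box}$ and base change along the quotient map, which preserves the shape of the presentation (same $r$, same number of relations). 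So your conclusion is right, but the clean logical path is the one the paper takes and that you offer second.
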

\begin{proof}By proposition 15 there is an  isomorphism (in $\are$)

$$\hat{\mathcal{O}}_{X_{fs}^{\Box}, (x, \lambda)} \cong (R_{V_x^p}^{\Box, h} \hat{\otimes}_E R_{V_{x}^l}^{\Box})[[x_1, \cdots, x_r]] / (f_1, \cdots, f_{r+2}).$$

Recall that $R_{V_{x}^l}^{\Box}$  is the completed tensor product of the complete local ring at the image of $x$ in $\xflb$ with $E$.  By the definition of $X_{fs}^{\Box, sp}$ we deduce that the complete local ring at $(x, \lambda) \in X_{fs}^{\Box, sp}$ must be equal to the completed tensor product of $\hat{\mathcal{O}}_{X_{fs}^{\Box}, (x, \lambda)}$ with $R_{V_x^l}^{\Box, sp}$ over $R_{V_x^l}^{\Box}$, hence we deduce that 
$$\hat{\mathcal{O}}_{X_{fs}^{\Box, sp}, (x, \lambda)} \cong (R_{V_x^p}^{\Box, h} \hat{\otimes}_E R_{V_{x}^l}^{\Box, sp})[[x_1, \cdots, x_r]] / (f_1, \cdots, f_{r+2}). $$
\end{proof}
\noindent
Let $(x, \lambda) \in X_{fs}^{\Box, N=0}$, with residue field $E$.  Let $x' \in \xflbn$ denote the image of $(x, \lambda)$ under the natural projection.  Let $R_{x'}^{N=0}$ be the complete local ring at $x'$.  As before there is a natural embedding $k(x')\subset E$, where $k(x')$ is the residue field at $x'$.  We form the complete tensor product $R_{V_x^l}^{\Box, N=0}: = R_{x'}^{N=0}\hat{\otimes}_{k(x')}E \in \are$.
\begin{prop}
If $(x, \lambda) \in X_{fs}^{\Box, N=0}$ satisfies $(\ddagger)$, then there is an isomorphism (in $\are$)

$$\hat{\mathcal{O}}_{X_{fs}^{\Box, N=0}, (x, \lambda)} \cong (R_{V_x^p}^{\Box, h} \hat{\otimes}_E R_{V_{x}^l}^{\Box, N=0})[[x_1, \cdots, x_r]] / (f_1, \cdots, f_{r+2}). $$
\end{prop}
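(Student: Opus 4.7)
The plan is to mirror the argument of Proposition 16 verbatim, replacing the closed condition ``subquotient of a reducible principal series'' with the closed condition ``$N=0$.'' The two constructions are parallel fibre products of $X_{fs}^{\Box}$ against closed subspaces of $\xflb$, so all that changes at the level of complete local rings is the local deformation ring at $l$ that one tensors against.

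First, I would recall from Proposition 15 that under hypothesis $(\ddagger)$ we have an isomorphism in $\are$
\begin{equation*}
\hat{\mathcal{O}}_{X_{fs}^{\Box}, (x, \lambda)} \cong (R_{V_x^p}^{\Box, h} \hat{\otimes}_E R_{V_{x}^l}^{\Box})[[x_1, \cdots, x_r]] / (f_1, \cdots, f_{r+2}).
\end{equation*}
Next, I would invoke the definition of $X_{fs}^{\Box, N=0}$ as the fibre product of $X_{fs}^{\Box}$ with $\xflbn \times \mathbb{G}_m$ over $\xff\times \mathbb{G}_m$. Since fibre products of rigid spaces correspond to completed tensor products of complete local rings at a point, this gives a natural isomorphism
\begin{equation*}
\hat{\mathcal{O}}_{X_{fs}^{\Box, N=0}, (x, \lambda)} \cong \hat{\mathcal{O}}_{X_{fs}^{\Box}, (x, \lambda)} \hat{\otimes}_{R_{V_x^l}^{\Box}} R_{V_x^l}^{\Box, N=0},
\end{equation*}
where the map $R_{V_x^l}^{\Box} \to \hat{\mathcal{O}}_{X_{fs}^{\Box}, (x, \lambda)}$ factors through the tensor factor displayed in the previous formula.

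Substituting the explicit presentation from Proposition 15 and using associativity and base-change properties of the completed tensor product (the power series variables and the relations pass through the base change without disturbance, since they are defined over the complete local base), this simplifies to
\begin{equation*}
\hat{\mathcal{O}}_{X_{fs}^{\Box, N=0}, (x, \lambda)} \cong (R_{V_x^p}^{\Box, h} \hat{\otimes}_E R_{V_{x}^l}^{\Box, N=0})[[x_1, \cdots, x_r]] / (f_1, \cdots, f_{r+2}),
\end{equation*}
which is the claimed result.

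The only genuinely technical point is verifying that the completed local ring of the fibre product is the completed tensor product of the completed local rings; this is standard for rigid analytic spaces (cf.\ the discussion in \S2 around Kisin's identification of $\widehat{\mathcal{O}}_{\xf, x}$ with $\widehat{R^{\Box}_{\vf}[1/p]_m}$) and was invoked implicitly in Proposition 16, so no new obstruction arises. Thus the proof is entirely formal once the analog for the generically principal series locus has been established.
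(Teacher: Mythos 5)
Your proposal is correct and follows exactly the route the paper takes: the paper's own proof of this proposition consists of the single sentence ``Exactly the same as above,'' deferring to the argument of Proposition 16, and your write-up is simply that argument carried out explicitly with $\xflbn$ in place of $\xflbsp$ and $R_{V_x^l}^{\Box, N=0}$ in place of $R_{V_x^l}^{\Box, sp}$. Nothing in the fibre-product / completed-tensor-product argument depends on which closed subspace of $\xflb$ one pulls back, so the substitution is sound.
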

\begin{proof}Exactly the same as above.
\end{proof}

We are now in a position to state the two main theorems of this section:

\begin{thm}

If $(x, \lambda) \in X_{fs}^{sp} $ satisfies $(\ddagger)$, then it lies on a irreducible component of dimension greater than or equal to one.
\end{thm}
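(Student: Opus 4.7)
The plan is to pass to the framed space $X_{fs}^{\Box, sp}$, where proposition 16 furnishes an explicit presentation of the complete local ring, then bound its Krull dimension from below and transfer the estimate back to $X_{fs}^{sp}$ via the formally smooth projection.

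Using the surjectivity of $X_{fs}^{\Box, sp} \to X_{fs}^{sp}$, I would lift $(x, \lambda)$ to a point $(\tilde{x}, \lambda) \in X_{fs}^{\Box, sp}$, which inherits property $(\ddagger)$. Proposition 16 then yields
\[
\hat{\mathcal{O}}_{X_{fs}^{\Box, sp}, (\tilde{x}, \lambda)} \cong \bigl(R_{V_x^p}^{\Box, h} \,\hat{\otimes}_E\, R_{V_x^l}^{\Box, sp}\bigr)[[x_1, \ldots, x_r]] \,/\, (f_1, \ldots, f_{r+2}).
\]
Condition $(\ddagger)$ combined with theorem 6 ensures $V_x^p$ satisfies $(\dagger)$, so by corollary 1 the factor $R_{V_x^p}^{\Box, h}$ is formally smooth over $E$ of dimension $6$. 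On the $l$-adic side, membership in $X_{fs}^{sp}$ forces $\xflbsp$ to be non-empty, and theorem 3 (with the remark following it) gives that $R_{V_x^l}^{\Box, sp}$ is formally smooth over $E$ of dimension $4$. Hence the completed tensor product has Krull dimension $10$, and adjoining $r$ formal variables followed by cutting out $r + 2$ relations gives, via Krull's Hauptidealsatz,
\[
\dim \hat{\mathcal{O}}_{X_{fs}^{\Box, sp}, (\tilde{x}, \lambda)} \geq 10 + r - (r + 2) = 8.
\]

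Since the surjection $X_{fs}^{\Box, sp} \to X_{fs}^{sp}$ is formally smooth of relative dimension $7$ at points satisfying $(\ddagger)$, descending yields $\dim \hat{\mathcal{O}}_{X_{fs}^{sp}, (x, \lambda)} \geq 1$, placing $(x, \lambda)$ on an irreducible component of dimension at least one. The argument is essentially a dimension count and I do not anticipate a serious obstacle; the only bookkeeping points to check carefully are that $(\ddagger)$ genuinely implies $(\dagger)$ for $V_x^p$ (so that corollary 1 applies), and that the exact relation count $r + 2$ from proposition 16 is used, the $+2$ coming from $s = \dim_E H^0(G_\infty, \mathrm{ad}\, V_x) = 2$ for an odd representation, so that Krull's bound is not squandered. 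The entire level-raising geometry is already absorbed into proposition 16 and the local dimension counts from theorem 3 and corollary 1.
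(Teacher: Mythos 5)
Your argument is correct and follows the paper's proof essentially verbatim: lift to the framed space, invoke Proposition 16 for the presentation, combine the dimension-$6$ count for $R_{V_x^p}^{\Box,h}$ (Corollary 1) with the dimension-$4$ count for $R_{V_x^l}^{\Box,sp}$ (Theorem 3 and the twisting remark), apply Krull, and descend along the relative-dimension-$7$ smooth surjection. The only (cosmetic) difference is that you correctly cite Theorem 3 as giving smoothness of the \emph{special} local deformation space, where the paper's printed proof writes $\xflb$ in place of $\xflbsp$.
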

\begin{proof} We know that at points which satisfy $(\ddagger)$ the morphism $X_{fs}^{\Box, sp} \rightarrow X_{fs}^{sp}$ is formally smooth of relative dimension 7.  Hence if we choose a point lying over $(x, \lambda)$ and determine the dimension of its complete local ring we can deduce the dimension of $\hat{\mathcal{O}}_{X_{fs}^{sp}, (x, \lambda)}$.  By an abuse of notation let $(x, \lambda) \in X_{fs}^{\Box, sp}$ be such a point.  We know by the proposition 16 that 
$$\hat{\mathcal{O}}_{X_{fs}^{\Box, sp}, (x, \lambda)} \cong (R_{V_x^p}^{\Box, h} \hat{\otimes}_E R_{V_{x}^l}^{\Box, sp})[[x_1, \cdots, x_r]] / (f_1, \cdots, f_{r+2}). $$
By  $(\ddagger)$ we know, by corollary 1, that $R_{V_x^p}^{\Box, h}$ is formally smooth of dimension $6$.   By theorem 3 we know that $\xflb$ is formally smooth of dimension 4.  Hence we deduce that $dim(\hat{\mathcal{O}}_{X_{fs}^{\Box, sp}, (x, \lambda)} )\geq 8$.  Thus $dim(\hat{\mathcal{O}}_{X_{fs}^{sp}, (x, \lambda)} )\geq 8 - 7 =1$.  The result follows.
\end{proof}
\begin{thm}
If $(x, \lambda)\in X_{fs}^{N=0} $ satisfies $(\ddagger)$, then it lies on an irreducible  component of dimension greater than or equal to one.
\end{thm}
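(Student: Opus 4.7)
The plan is to mirror the strategy used in the preceding theorem for the $sp$-locus, with the local level-lowering deformation ring $R_{V_x^l}^{\Box, N=0}$ playing the role formerly occupied by $R_{V_x^l}^{\Box, sp}$. First, I would exploit the fact that the vertical map $X_{fs}^{\Box, N=0} \to X_{fs}^{N=0}$ is surjective and, at points satisfying $(\ddagger)$, formally smooth of relative dimension $7$. This reduces the problem to exhibiting a point $(x, \lambda)$ (by a mild abuse of notation) in $X_{fs}^{\Box, N=0}$ lying over the original one whose complete local ring has Krull dimension at least $8$.

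Next I would invoke Proposition 17 to write
$$\hat{\mathcal{O}}_{X_{fs}^{\Box, N=0}, (x, \lambda)} \cong (R_{V_x^p}^{\Box, h} \hat{\otimes}_E R_{V_{x}^l}^{\Box, N=0})[[x_1, \cdots, x_r]] / (f_1, \cdots, f_{r+2}),$$
and then combine two formal smoothness inputs. Condition $(\ddagger)$ guarantees $(\dagger)$ holds for $V_x^p$, so Corollary 1 ensures that $R_{V_x^p}^{\Box, h}$ is formally smooth over $E$ of dimension $6$. Since $(x, \lambda) \in X_{fs}^{N=0}$, the image of $x$ in $\xflbn$ witnesses non-emptiness of this local deformation space, so Theorem 1 applies and shows $R_{V_x^l}^{\Box, N=0}$ is formally smooth over $E$ of dimension $d^2 = 4$.

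Putting this together, the complete tensor product $R_{V_x^p}^{\Box, h} \hat{\otimes}_E R_{V_{x}^l}^{\Box, N=0}$ is formally smooth of dimension $10$. Adjoining $r$ formal variables and cutting by $r+2$ relations leaves Krull dimension at least $10 + r - (r+2) = 8$, hence $\dim \hat{\mathcal{O}}_{X_{fs}^{\Box, N=0}, (x, \lambda)} \geq 8$. Dividing out by the relative dimension $7$ contribution of the formally smooth framing map produces $\dim \hat{\mathcal{O}}_{X_{fs}^{N=0}, (x, \lambda)} \geq 1$, which forces the existence of an irreducible component of $X_{fs}^{N=0}$ through $(x, \lambda)$ of dimension at least one.

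Since the argument is structurally identical to the preceding one, no genuinely new obstacle arises: the real work has been absorbed into Proposition 17 (which packages the deformation-theoretic presentation, via the global-over-local bound of Proposition 14) and into Theorem 1 (which supplies formal smoothness of the $N=0$ local factor in the correct dimension). The only mild subtlety is checking that $R_{V_x^l}^{\Box, N=0}$ is non-trivial at $x$, but this is immediate because $(x, \lambda) \in X_{fs}^{N=0}$ by hypothesis, so the image of $x$ already lives in $\xflbn$.
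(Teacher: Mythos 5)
Your proof is correct and follows the same approach the paper takes: the paper simply declares the proof of this theorem ``identical to the above theorem,'' and your argument spells out exactly the intended substitutions, namely Proposition 17 in place of Proposition 16 and Theorem 1 (giving formal smoothness of $R_{V_x^l}^{\Box, N=0}$ of dimension $d^2=4$) in place of Theorem 3.
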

\begin{proof} The proof is identical to the above theorem.
\end{proof}

\section{The Coleman-Mazure Eigencurve}
Let $S$ be a finite set of places of $\mathbb{Q}$ containing $p$ and $\infty$.    Let $\vf$ be a two dimensional $\f$-vector space equipped with a continuous, odd action of $G_{\mathbb{Q}, S}$.  Let us furthermore assume that $end_{\f[G_{\mathbb{Q}, S}]}\vf = \f$ and that $\vf$ is modular.  By modular we mean that up to semi-simplification it is isomorphic to a mod $p$ representation attached to a classical cuspform. As usual we let $\x$ denote the universal deformation space associated to $\vf$.  

Coleman and Mazur show that the points $(x, \lambda) \in \x \times \mathbb{G}_m$ such that $V_x$ corresponds to a cuspidal eigenform $f$ of level $\Gamma(Mp^r)$ with $r \geq 1$ and $U_pf= \lambda f$, interpolate to a rigid analytic curve $\mathcal{E} \subset \x \times \mathbb{G}_m$.  Here $M$ is a natural number whose divisors are the finite primes in $S$ not equal to $p$.  
The possible $M$ are bounded by the ramification properties of $\vf$ (see \cite{BOC}).  Let $N$ the maximal such $M$.   $\mathcal{E}$ is the cuspidal Coleman Mazur eigencurve of tame level $N$ lying over $\vf$.  

The $\cp$-valued points of $\mathcal{E}$ correspond bijectively to systems of Hecke eigenvalues associated to cuspidal, overconvergent, eigenforms of tame level $N$, which are of finite slope (i.e. the eigenvalue of $U_p$ is non-zero) and whose residual Galois representations have the same semi-simplification as $\vf$.  It is systems of Hecke eigenvalues and not eigenforms themselves because if $N\neq 1$ then if $l$ is a prime dividing $N$ then given a cuspform which is new at $l$ then we can $l$-stabilse to get two old forms at $l$ which have the same associated Galois representation and $U_p$ eigenvalue, hence correspond to the same point on $\mathcal{E}$.  The Hecke theoretic construction of $\mathcal{E}$ (see \cite{KB}) involves \textit{glueing} suitable Hecke algebras away from the tame level $N$.  If we carry out the same construction with full Hecke-algebras (i.e. including for those primes dividing $N$) then we get a curve whose $\cp$-valued points correspond bijectively to cuspidal, overconvergent, eigenforms of tame level $N$, which are of finite slope.  Let $\tilde{\mathcal{E}}$ denote the component of this curve whose points have associated residual representation isomorphic to the semi-simplification of $\vf$.  There is a natural morphism of rigid analytic curves $\tilde{\mathcal{E}} \rightarrow \mathcal{E}$. 

 Let $\mathcal{E}^o\subset \mathcal{E}$ denote the locus whose associated global Galois representation is absolutely irreducible.  This is a dense,  admissible open subset containing all points corresponding to classical cuspforms.  Similarly let $\tilde{\mathcal{E}}^o$ denote the irreducible locus of $\tilde{\mathcal{E}}$.  To any point $x \in \tilde{\mathcal{E}}^o$ we can associate (see \cite{PAU}) a smooth, admissible representation of $GL_2(\ql)$, $\pi_{x,l}$,  and a 2-dimensional Weil-Deligne representation $(\rho_{x, l}, N_x)$.  On any irreducible component of $\tilde{\mathcal{E}}^o$  these representations are generically principal series, special or supercuspidal. As in \S ,  let $\pi$ denote the Tate normalised local Langlands correspondence.  The central result of \cite{PAU} is :
 
 \begin{thmlg}
There is a discrete subset $\mathcal{X}\subset \tilde{\mathcal{E}}^o$ such that for all $x \in  \tilde{\mathcal{E}}^o$, $x\notin \mathcal{X}$, local to global compatibility holds, i.e.
 $$ \pi_{x,l} \cong \pi(\rho_{x, l}, N_x).$$
 Moreover, local to global compatibility can fail under one of two situations:

\begin{enumerate}
\item $x$ lies on a generically principal series irreducible component but $\pi(\rho_{x, l}, N_x)$ is one dimensional.
\item $x$ lies on a generically special irreducible component but $N_x=0$.
\end{enumerate}
 
 \end{thmlg}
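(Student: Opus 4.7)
The plan is to interpolate both $\pi_{x,l}$ and $(\rho_{x,l},N_x)$ rigid-analytically over $\tilde{\mathcal{E}}^o$, invoke classical local-to-global compatibility of Carayol and Saito at the Zariski dense subset of classical points, and then exploit the fact that $\tilde{\mathcal{E}}^o$ is one-dimensional to conclude that the failure locus is discrete.

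First I would construct the Weil-Deligne family: the eigencurve carries a canonical two-dimensional family of global Galois representations, and restricting to $\gql$ then applying Proposition 2 yields a family Weil-Deligne representation $(\rho_{x,l},N_x)$ over $\tilde{\mathcal{E}}^o$. The monodromy operator $N$ lies in $M_2$ of the coordinate ring on each affinoid, so the locus $\tilde{\mathcal{E}}^{N=0}\subset\tilde{\mathcal{E}}^o$ on which $N_x=0$ is Zariski closed. Similarly the characteristic polynomial of a Frobenius lift is a rigid analytic function on $\tilde{\mathcal{E}}^o$, so the locus on which the ratio of its roots is $l^{\pm 1}$ is closed. On the automorphic side I would define $\pi_{x,l}$ so that its Langlands type (principal series, special, or supercuspidal together with its parameters) is constant on each irreducible component of $\tilde{\mathcal{E}}^o$; this should follow from continuity of the Hecke eigenvalues away from $l$ together with propagation of the classical $\pi_{f,l}$ across each component from its dense set of classical points.

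At classical points, Carayol-Saito gives $\pi_{x,l}\cong \pi(\rho_{x,l},N_x)$. Since classical points are Zariski dense on each irreducible component and both sides are rigidly interpolated, this isomorphism persists on a Zariski open subset of each component. Failure can then only occur on a proper closed subset, and by case analysis on the generic type it must fall into one of the two listed situations: (i) a generically principal series component where the ratio of Satake parameters degenerates to $l^{\pm 1}$, so that $\pi(\rho_{x,l},N_x)$ collapses to a one-dimensional representation; or (ii) a generically special component where $N_x$ drops to $0$ at an isolated point, so that $\pi(\rho_{x,l},N_x)$ becomes principal series and no longer matches the special $\pi_{x,l}$. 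Each such locus is a proper Zariski closed subset of a one-dimensional rigid space, hence discrete, and their union defines the set $\mathcal{X}$. Generically supercuspidal components never fail because the full Weil-Deligne representation is irreducible throughout, so nothing can degenerate.

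The hard part will be rigorously defining $\pi_{x,l}$ at a non-classical overconvergent point and verifying that its local Langlands parameters genuinely interpolate along each component: for a classical form $\pi_{x,l}$ arises from the automorphic representation itself, but for an overconvergent eigenform one must construct it indirectly, most likely by propagating the classical data across a component using compatibility with the prime-to-$l$ Hecke algebra and continuity of the Frobenius-semisimple part of $(\rho_{x,l},N_x)$. Once this interpolation is established, the classification of the two failure modes and the discreteness of $\mathcal{X}$ follow from the closedness analysis above on the one-dimensional curve $\tilde{\mathcal{E}}^o$.
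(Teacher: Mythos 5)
This theorem is recalled verbatim from the author's earlier paper \cite{PAU} and is not reproved in the present paper, so there is no internal proof to compare against; I will assess your proposal on its own terms.

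Your overall strategy --- interpolate both $\pi_{x,l}$ and $(\rho_{x,l},N_x)$ rigid-analytically, invoke Carayol--Saito at the Zariski-dense classical points, and exploit one-dimensionality of the curve to conclude the failure locus is discrete --- is the right one, and you correctly flag the genuine technical crux, namely giving an honest definition of $\pi_{x,l}$ at a non-classical overconvergent point (this is precisely the content that \cite{PAU} supplies and that your sketch leaves open). However, there is a concrete error in your case analysis for failure mode (ii). You claim that on a generically special component, when $N_x$ degenerates to $0$, the representation $\pi(\rho_{x,l},N_x)$ ``becomes principal series.'' This is false. On a generically special component the two Frobenius eigenvalues of $\rho_{x,l}$ have ratio $l^{\pm 1}$ at every point: the relation $\rho_0(\Phi)N\rho_0(\Phi)^{-1}=\|\Phi\|N$ forces this whenever $N\neq 0$, it holds at the dense set of classical points, and it is a Zariski-closed condition, hence holds identically. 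Therefore when $N_x=0$ one obtains a Weil--Deligne representation with trivial monodromy and eigenvalue ratio $l^{\pm 1}$; the normalized induction with that ratio is \emph{reducible}, and under the Tate-normalized local Langlands correspondence the Weil--Deligne representation corresponds to its one-dimensional subquotient, not to the induction. In other words, in case (ii) as well as case (i), $\pi(\rho_{x,l},N_x)$ is one-dimensional. This is not incidental: the paper explicitly asserts in the introduction that on the irreducible locus failure can \emph{only} occur when $\pi(\rho_{x,l},N_x)$ is one-dimensional, and the entire architecture of the level raising and lowering conjectures (that such $x$ should lie on \emph{both} a generically special and a generically principal series component) rests on this observation. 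Your misidentification does not break the ``mismatch, hence failure'' conclusion in case (ii), but it does miss the unifying structure that makes the theorem useful downstream, and if carried forward it would obscure why cases (i) and (ii) are two faces of the same degeneration rather than logically independent phenomena.
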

Observe that the failure of local to global compatibility is a consequence of the properties of the Galois representation associated to $x$.  It is, therefore, natural to ask what consequences the failure of local to global compatibility has for $\mathcal{E}$.  In particular we have the level raising and lowering conjectures in \S1.

By work of Kisin (\cite{Kis4}) we know that $\mathcal{E} \subset X_{fs}$.  Furthermore in \S11 of \cite{Kis4} it is conjectured that they are equal.  In \cite{Kis6} Kisin proves that $\mathcal{E}=X_{fs}^o\subset X_{fs}$, where $X_{fs}^o$ is the union of the irredicible components containing a potentially semistable point.  In an important forthcoming paper (\cite{EM}) Emerton proves, by establishing a local to global principle in the $p$-adic Langlands programme for $GL_{2/\q}$, the following important result:

\begin{thme}
Let $V_{\f}^p$ be the restriction of $\vf$ to $\gqp$.  If $V_{\f}^p$ is $p$-distinguished and not a twist of an extension of the cyclotomic character by the trivial character then

$$\mathcal{E} = X_{fs}.$$

\end{thme}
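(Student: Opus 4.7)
The plan is to establish an $R = T$ style identification using Emerton's theory of completed cohomology, building on Kisin's partial result $\mathcal{E} = X_{fs}^o$ where $X_{fs}^o$ denotes the union of the irreducible components of $X_{fs}$ containing a potentially semistable point. The remaining task is therefore to show that every irreducible component of $X_{fs}$ contains a classical point, and I would attack this by realising both $\mathcal{E}$ and $X_{fs}$ as loci inside a common Banach-theoretic object cut out by conditions of the same Galois-theoretic nature.

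First I would construct the completed cohomology $\widetilde{H}^1$ of the tower of modular curves of tame level dividing the conductor of $\vf$, localised at the maximal ideal $\mathfrak{m}_{\vf}$ of the big away-from-$p$ Hecke algebra cut out by $\vf$. This $\widetilde{H}^1_{\vf}$ carries commuting continuous actions of $\gqs$ and of $GL_2(\mathbb{A}^f)$, and its $GL_2(\qp)$-part is an admissible unitary Banach representation to which Colmez's $p$-adic local Langlands functor applies. Next I would invoke Colmez's functor to relate the $GL_2(\qp)$-structure of $\widetilde{H}^1_{\vf}$ to its $\gqp$-structure. The $p$-distinguished hypothesis on $V_{\f}^p$ ensures that this functor has the expected residual behaviour and that the associated Serre weights are controlled, while the hypothesis that $V_{\f}^p$ is not a twist of an extension of $\chi$ by the trivial character rules out the exceptional reducible case in which additional multiplicities arise on the automorphic side, obstructing the injectivity of Colmez's correspondence. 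Combined with classical local-global compatibility at places away from $p$ (so that the Hecke action at each $l \in S\setminus\{p\}$ is controlled by the Weil-Deligne parameter of $V_x|_{\gql}$), this gives a surjection from a suitable universal deformation ring of $\vf$ onto a Hecke algebra acting faithfully on $\widetilde{H}^1_{\vf}$, and ultimately a functorial isomorphism.

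To extract eigencurve information from this Banach-theoretic identification I would apply Emerton's Jacquet functor to the locally analytic vectors of $\widetilde{H}^1_{\vf}$. The defining property of a point $(x,\lambda) \in X_{fs}$ furnishes a non-zero crystalline period $v_h \in D_{cris}^+(V_x^p)^{\varphi = \lambda}$; under Colmez's functor this translates, via the trianguline $(\varphi, \Gamma)$-module structure established in the excerpt, into a non-zero vector in the Jacquet module of $\widetilde{H}^1_{\vf}$. By Emerton's interpretation of the Jacquet module, such a vector corresponds precisely to an overconvergent finite slope eigenform with the prescribed Hecke data, hence a point of $\mathcal{E}$. Running this argument functorially on infinitesimal neighbourhoods $A \in \are$ yields the inclusion $X_{fs} \subset \mathcal{E}$ on complete local rings, which globalises because both sides are reduced rigid analytic curves over $\wf[1/p]$.

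The hard part, and the core content of Emerton's paper, is the middle step: pinning down Colmez's functor on the completed cohomology in sufficient generality that one can pass freely between the Banach setting of $\widetilde{H}^1_{\vf}$, the locally analytic setting in which the Jacquet functor lives, and the deformation-theoretic setting governing $X_{fs}$, while tracking both the $GL_2(\qp)$-action and the arithmetic of the $\gqp$-action. This requires a delicate analysis of admissibility, a handle on the behaviour of the Colmez functor at the reducible boundary of trianguline $(\varphi, \Gamma)$-modules, and the verification that no exotic non-classical contributions enter $\widetilde{H}^1_{\vf}$ under the stated hypotheses on $V_{\f}^p$; it is precisely here that the excluded twist of the extension of $\chi$ by the trivial character would destroy the injectivity of the correspondence and hence the equality $\mathcal{E} = X_{fs}$.
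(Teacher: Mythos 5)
The paper does not actually give a proof of this statement: it is quoted verbatim as a theorem of Emerton, cited from the (then-forthcoming) reference \cite{EM}, and is used as a black box. So there is no in-paper argument to compare yours against.

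That said, your sketch is a reasonable account of the expected strategy, and the ingredients you list (completed cohomology $\widetilde{H}^1$ localised at $\mathfrak{m}_{\vf}$, Colmez's $p$-adic local Langlands functor, Emerton's locally analytic Jacquet functor, Kisin's result that $\mathcal{E} = X_{fs}^o$ from \cite{Kis6}) are the correct ones for Emerton's local-global compatibility machine. You also correctly identify that the inclusion $\mathcal{E}\subset X_{fs}$ is already known and that the content is the reverse containment. However, you should be honest that what you have written is not a proof: you explicitly defer "the core content of Emerton's paper" to a middle step that you do not carry out, namely the precise identification of the $\gqp \times GL_2(\qp)$-bimodule structure of $\widetilde{H}^1_{\vf}$ via Colmez's functor and the deformation ring, and the multiplicity-one / faithfulness statements that make the Jacquet-functor comparison bite. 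Your gloss on the two hypotheses on $V_{\f}^p$ is also somewhat impressionistic: the $p$-distinguished condition and the exclusion of a twist of an extension of $\chi$ by the trivial character are there essentially to guarantee that the relevant local deformation problem at $p$ and the mod $p$ Colmez functor behave well (representability, absence of the extra automorphisms and the pathological reducible situation), and it would be better to state precisely which representability or multiplicity statement fails without them rather than to invoke "Serre weights" loosely. In short: the outline is directionally right and consistent with how the statement is used in this paper, but as a proof it is a programme, not an argument, and the paper itself offers nothing to check it against beyond the citation to \cite{EM}.
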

Philosophically this is an $R=T$ result.  $X_{fs}$ is constructed Galois theoretically and $\mathcal{E}$ Hecke theoretically.    We are now in a position to be prove (under mild technical restrictions) the level lowering and level raising conjectures. 

\begin{thmll} Let $S$ be a finite set of places of $\q$ such that $\{l,p, \infty\}\subset S$ and $S \setminus \{l,p, \infty\} \neq \emptyset $.  Let $\vf$ be a 2-dimensional, mod $p$,  modular representation of $\gqs$ such that $end_{\f[G_{\mathbb{Q}, S}]}\vf = \f$.  Furthermore, assume that $V_{\f}^p$ is $p$-distinguished and not a twist of an extension of the cyclotomic by trivial characters.   Let $\mathcal{E}$ denote the cuspidal eigencurve lying over $\vf$.  Let $x \in \mathcal{E}$ such that 
\begin{enumerate}
\item The Galois representation associated to $x$ satisfies $(\ddagger)$.
\item $x$ lies on a generically special (at $l$) irreducible component but has trivial monodromy (i.e. $N_x =0$).
\end{enumerate}
Then $f$ lies on a one dimensional generically principal series (at $l$) irreducible component.

\end{thmll}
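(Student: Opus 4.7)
My plan is to reduce to the Galois-theoretic avatar via Emerton's $R=T$ theorem, apply the level lowering result for $X_{fs}$ (Theorem 6), and then identify the resulting component as generically principal series at $l$. First, Emerton's theorem applies under the stated hypotheses on $V_{\f}^p$, giving $\mathcal{E} = X_{fs}$. Writing $(x, \lambda) \in X_{fs}$ for the point corresponding to $x$, with $\lambda$ the $U_p$-eigenvalue, the hypothesis $N_x = 0$ places $(x, \lambda)$ in the closed subspace $X_{fs}^{N=0}$, and $(\ddagger)$ holds by assumption. Theorem 6 therefore produces an irreducible component $\mathcal{Z}' \subset X_{fs}^{N=0}$ through $(x, \lambda)$ of dimension at least one. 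Since $\mathcal{E}$ is a rigid analytic curve and $\mathcal{Z}' \subseteq \mathcal{E}$, we must have $\dim \mathcal{Z}' = 1$, and $\mathcal{Z}'$ is itself an irreducible component of $\mathcal{E}$. Moreover $\mathcal{Z}' \neq \mathcal{Z}$, because monodromy vanishes identically on $\mathcal{Z}'$ while $N \neq 0$ at the generic point of the generically special component $\mathcal{Z}$.

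Next I would pin down the local behaviour at $l$. Because $\mathcal{Z}$ is generically Steinberg up to twist, the associated Weil--Deligne representation at $l$ is residually reducible along $\mathcal{Z}$; specialising to $x$ exhibits $V_x^l$ as a twist of an extension of the trivial character by the cyclotomic character, so $(x, \lambda) \in X_{fs}^{sp}$ as well. Hence the image of $\mathcal{Z}'$ in the framed local deformation space $\xflb$ passes through the image of $x$ in $\xflbsp \cap \xflbn$.

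The main obstacle is to upgrade this pointwise statement to a global one along $\mathcal{Z}'$, ruling out generically supercuspidal behaviour. I plan to handle this with a local deformation-theoretic analysis at $l$: by Theorem 3, $\xflbsp$ is formally smooth of dimension $4$ at the image of $x$, and by Theorems 1 and 2, so is $\xflbn$; the explicit presentation of $\hat{\mathcal{O}}_{X_{fs}^{\Box, N=0}, (x, \lambda)}$ provided by Proposition 17, as a quotient involving $R_{V_x^l}^{\Box, N=0}$, should force the framed image of $\mathcal{Z}'$ into $\xflbsp \cap \xflbn$, the locus parameterising reducible deformations with trivial monodromy. Once $\mathcal{Z}' \subset X_{fs}^{sp} \cap X_{fs}^{N=0}$ is established, the Weil--Deligne representation along $\mathcal{Z}'$ has $N = 0$ identically and $\rho_0$ generically reducible, so at the generic point the associated smooth representation of $GL_2(\ql)$ is principal series, confirming that $\mathcal{Z}'$ is the desired one-dimensional generically principal series irreducible component through $x$.
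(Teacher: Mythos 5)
Your first two paragraphs track the paper's proof exactly: Emerton's theorem gives $\mathcal{E} = X_{fs}$, Theorem 6 produces a one-dimensional component $\mathcal{Z}' \subset X_{fs}^{N=0}$ through $(x,\lambda)$, and since $\mathcal{E}$ is a curve this must be a full irreducible component of $\mathcal{E}$, distinct from $\mathcal{Z}$. The divergence, and the genuine gap, is in your third paragraph, where you must rule out generically supercuspidal behaviour on $\mathcal{Z}'$.

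You assert that the presentation of $\hat{\mathcal{O}}_{X_{fs}^{\Box, N=0}, (x,\lambda)}$ in Proposition 17, together with the smoothness of $\xflbsp$ and $\xflbn$ (each of dimension $4$), ``should force'' the image of $\mathcal{Z}'$ into $\xflbsp \cap \xflbn$. It does not. Proposition 17 expresses the local ring as a quotient of a power series ring over $R_{V_x^p}^{\Box,h}\,\hat{\otimes}_E\, R_{V_x^l}^{\Box,N=0}$, which shows the image lies in $\xflbn$ — something already known — but gives no information about whether it also lands in $\xflbsp$. Knowing that $\xflbsp$ and $\xflbn$ are each smooth of dimension $4$ says nothing about their intersection; two smooth four-dimensional subspaces of the same ambient can perfectly well meet in strictly smaller dimension, and a priori the component of $\xflbn$ through $x'$ could escape the special locus. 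What is actually needed — and what the paper invokes via density of classical points, phrased as ``classical points on such a component must be principal series'' — is the constancy of the inertial type along irreducible components of the Weil--Deligne family over $\mathcal{E}$ (from \cite{PAU}): since $\rho_{x,l}$ is a sum of two characters whose ratio is the cyclotomic character, which is unramified at $l$, the inertial action at $x$ is scalar; hence on all of $\mathcal{Z}'$ every $\rho_{y,l}$ is an unramified twist of a Frobenius-semisimple two-dimensional unramified representation, and such a representation is never absolutely irreducible. That is the missing lemma. As written, your deformation-theoretic variant asserts the containment it needs rather than proving it, whereas the paper's (terser) density argument reduces to the same inertial-type input but supplies it.
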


\begin{proof} The assumption on $V_{\f}^p$ ensures that $\mathcal{E}= X_{fs}$.  By the theorem 6 we observe that there is a one dimension component passing through $x$ on which monodromy is identically zero.   Recall that classical points on such a component must be principal series, thus because they are Zariski dense the whole component is generically principal series.
\end{proof}
\begin{thmlr} Let $S$ be a finite set of places of $\q$ such that $\{l,p, \infty\}\subset S$ and $S \setminus \{l,p, \infty\} \neq \emptyset $.  Let $\vf$ be a 2-dimensional, mod $p$,  modular representation of $\gqs$ such that $end_{\f[G_{\mathbb{Q}, S}]}\vf = \f$.  Furthermore, assume that $V_{\f}^p$ is $p$-distinguished and not a twist of an extension of the cyclotomic by trivial characters.   Let $\mathcal{E}$ denote the cuspidal eigencurve lying over $\vf$.  Let $f \in \mathcal{E}$ such that 
\begin{enumerate}
\item The Galois representation associated to $x$ satisfies $(\ddagger)$.
\item $x$ lies on a one dimensional generically principal series (at $l$)  irreducible component but $\pi(\rho_{x,l}, N_x)$ is one dimensional.
\end{enumerate}
Then $x$ lies on a one dimensional generically special (at $l$) irreducible component.

\end{thmlr}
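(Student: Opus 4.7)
The plan is to follow the strategy of the Level Lowering Theorem, using the local dimension estimate for $X_{fs}^{sp}$ in place of the one for $X_{fs}^{N=0}$, and then transferring the conclusion via Emerton's result. First, I would invoke Emerton's theorem (applicable under the hypotheses on $V_{\f}^p$) to identify $\mathcal{E}$ with $X_{fs}$, so that it suffices to find a generically special one-dimensional irreducible component of $X_{fs}$ through $x$. Next, I would observe that since $\pi(\rho_{x,l}, N_x)$ is one-dimensional under the Tate-normalised local Langlands correspondence, necessarily $N_x = 0$ and $\rho_{x,l} \cong \chi_1 \oplus \chi_2$ with $\chi_1/\chi_2$ the $p$-adic cyclotomic character of $G_{\mathbb{Q}_l}$. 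Because $N_x = 0$ and the two characters are distinct, the Weil--Deligne data recover $V_x^l \cong E(\chi_1) \oplus E(\chi_2)$ as a $G_{\mathbb{Q}_l}$-representation, which after twisting by $\chi_1^{-1}$ is a (split) extension of $E$ by $E(\chi)$. This places $(x, \lambda)$ in $X_{fs}^{sp}$.

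Since $(x,\lambda) \in X_{fs}^{sp}$ satisfies $(\ddagger)$ by hypothesis, I would then apply the theorem asserting that any such point of $X_{fs}^{sp}$ lies on an irreducible component of $X_{fs}^{sp}$ of dimension at least one. This produces an irreducible component $\mathcal{Z}' \subset X_{fs}^{sp}$ through $(x,\lambda)$. Since $X_{fs} = \mathcal{E}$ is a rigid analytic curve, $\mathcal{Z}'$ is automatically a one-dimensional irreducible component of $\mathcal{E}$ contained in $X_{fs}^{sp}$. To complete the argument I would show that $\mathcal{Z}'$ is generically special at $l$. By construction, every $y \in \mathcal{Z}'$ has $V_y^l$ a twist of an extension of $E$ by $E(\chi)$, so the semisimple part of $\rho_{y,l}$ is a sum of two characters with cyclotomic ratio. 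Classical points are Zariski dense on every irreducible component of $\mathcal{E}$, hence in $\mathcal{Z}'$. At a classical point $y$, the local factor $\pi_{y,l}$ is infinite-dimensional while local--global compatibility yields $\pi_{y,l} \cong \pi(\rho_{y,l}, N_y)$; if $N_y = 0$ then the Weil--Deligne data at $y$ would be of one-dimensional type and $\pi_{y,l}$ would be one-dimensional, a contradiction. Thus $N_y \neq 0$ and $y$ is special at $l$, so density gives that $\mathcal{Z}'$ is generically special at $l$.

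The main obstacle is the genericity step: one must know that classical points are Zariski dense on every component of the eigencurve, and recognise that the dichotomy between the one-dimensional and Steinberg subquotients of a reducible principal series is detected by vanishing of $N$ under the Tate normalisation. Granting these, distinctness of $\mathcal{Z}'$ from the given generically principal series component through $x$ is automatic, since no irreducible component can be generically both principal series and special at $l$.
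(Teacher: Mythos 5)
Your proof is correct and follows essentially the same route as the paper's: identify $\mathcal{E}$ with $X_{fs}$ via Emerton's theorem, observe that $(x,\lambda)$ lies in $X_{fs}^{sp}$, invoke the dimension estimate for points of $X_{fs}^{sp}$ satisfying $(\ddagger)$ (Theorem 5), and conclude generic specialness from Zariski density of classical points. You actually make the final step more explicit than the paper does, whose proof of level raising is quite terse and leaves the density argument implicit (it is only spelled out in the level-lowering proof).
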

\begin{proof}
The assumption on $V_{\f}^p$ ensures that $\mathcal{E}= X_{fs}$.  The by theorem 5 we know that $x$ must lie on one dimensional irreducible component which is generically special.
\end{proof}

These two theorems may  be amalgamated into the following central result:

\begin{thm} 
Let $S$ be a finite set of places of $\q$ such that $\{l,p, \infty\}\subset S$ and $S \setminus \{l,p, \infty\} \neq \emptyset $.  Let $\vf$ be a 2-dimensional, mod $p$,  modular representation of $\gqs$ such that $end_{\f[G_{\mathbb{Q}, S}]}\vf = \f$.  Furthermore' assume that $V_{\f}^p$ is $p$-distinguished and not a twist of an extension of the cyclotomic by trivial characters.   Let $\mathcal{E}$ denote the  cuspidal eigencurve lying over $\vf$.  Let $x \in \mathcal{E}$ with associated global Galois representation $V_x$.  We assume that 
\begin{enumerate}
\item $V_x$  and $V_x^p$ are absolutely irreducible.
\item The generalised Hodge-Tate weights of $V_x^p$ are $0$ and $k \notin -\mathbb{N}\cup \{0\}$
\item $\pi(\rho_{x,l}, N_{x})$ is one dimensional.
\end{enumerate}
Then there exist one dimensional irreducible components $\mathcal{Z}, \mathcal{Z}' \subset \mathcal{E}$, generically special and principal series respectively, such that $x \in \mathcal{Z}\cap\mathcal{Z}'$.
\end{thm}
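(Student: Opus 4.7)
The plan is to amalgamate the previously proved Level Raising and Level Lowering theorems, bootstrapping from one of them to the other via Theorem 5. First, the hypotheses on $V_{\mathbb{F}}^p$ ($p$-distinguished and not a twist of an extension of $\bar{\chi}$ by the trivial character) place us in Emerton's theorem, yielding $\mathcal{E} = X_{fs}$; we therefore work exclusively on the Galois-theoretic side. The hypothesis that $\pi(\rho_{x,l},N_x)$ is one-dimensional unpacks into $V_x^l$ being, up to twist, an extension of $E(\chi)$ by $E$ with $N_x=0$, so that $x$ simultaneously lies in $X_{fs}^{sp}\cap X_{fs}^{N=0}$.

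Assuming first that $V_x^p$ is not crystalline, so that $(x,\lambda)$ satisfies $(\ddagger)$, I would apply Theorem 5 to $x \in X_{fs}^{sp}$ to produce an irreducible component $\mathcal{C}$ of $X_{fs}^{sp}$ through $x$ of dimension at least one. Since $X_{fs}$ itself is one-dimensional, $\mathcal{C}$ is automatically a one-dimensional irreducible component of $\mathcal{E}$ contained in $X_{fs}^{sp}$. By the Zariski-density of classical points on $\mathcal{C}$ and their classical local classification at $l$, the component $\mathcal{C}$ is either generically principal series or generically special at $l$. If the former, take $\mathcal{Z}'=\mathcal{C}$ and apply the Level Raising Theorem to $x \in \mathcal{C}$ (whose hypothesis that $\pi(\rho_{x,l},N_x)$ is one-dimensional is given) to obtain the generically special component $\mathcal{Z}$. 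If the latter, take $\mathcal{Z}=\mathcal{C}$ and apply the Level Lowering Theorem (whose hypothesis $N_x=0$ is given) to obtain the generically principal series component $\mathcal{Z}'$. Either way both required components exist through $x$.

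In the complementary case where $V_x^p$ is crystalline, which can occur only when $k$ is a positive integer, the point $x$ is classical by standard overconvergence-implies-classicality results and the classical level raising and lowering theorems in the style of Ribet supply $\mathcal{Z}$ and $\mathcal{Z}'$ directly. The main technical obstacle lies in the dichotomy step above: verifying that $\mathcal{C}$ is genuinely generically of one of the two pure types rather than a hybrid requires invoking the local constancy of the $l$-adic Weil--Deligne type along irreducible components of $\mathcal{E}$ together with the density of classical points of each fixed local type; this is standard for the eigencurve but deserves careful handling, and it is the only step of the argument that does not reduce mechanically to the ingredients developed in the body of the paper.
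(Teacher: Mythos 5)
Your first paragraph and the non-crystalline branch of your argument are essentially the right strategy and agree with the paper's own proof: pass to $X_{fs}$ via Emerton, observe $x$ lies on some one-dimensional irreducible component, determine its generic type at $l$, and then invoke the level raising or lowering theorem to produce the complementary component. The unnecessary detour through Theorem~5 and the worry about ``hybrid'' types is harmless but overcomplicated; once you know $\mathcal{C}\subset X_{fs}^{sp}$ with dense classical points, genericity of the local components of classical cuspidal automorphic representations already forces $\mathcal{C}$ to be generically special (not one-dimensional), so no genuine dichotomy arises at that stage.

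The real gap is your handling of the crystalline case, which you treat as a live possibility and try to dispatch with ``overconvergence-implies-classicality'' plus classical Ribet. Two problems. First, crystallinity alone does not imply classicality: the standard classicality criteria (Coleman, Kassaei) require a small-slope or non-criticality hypothesis, and critical crystalline points exist on the eigencurve. The correct tool is Kisin's Fontaine--Mazur theorem for $\mathrm{GL}_2/\mathbb{Q}$ (\cite{Kis6}), which shows that any potentially semi-stable $V_x$ (under the irreducibility and parity hypotheses already in force) is attached to a classical cuspform. Second, and more importantly, once one knows such an $x$ would be classical, hypothesis (3) of the theorem — $\pi(\rho_{x,l},N_x)$ one-dimensional — cannot hold, because local components of cuspidal automorphic representations of $\mathrm{GL}_2$ are generic and hence infinite-dimensional. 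So the crystalline case is \emph{vacuous}: hypothesis (3) forces $V_x$ non-classical, hence (by \cite{Kis6}) not potentially semi-stable, hence $(\ddagger)$ holds automatically. You should derive this rather than open a case branch and appeal to classical Ribet; the classical level raising/lowering theorems are congruence statements about modular forms and do not by themselves produce the one-dimensional irreducible components of $\mathcal{E}$ that the theorem asserts.
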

\begin{proof}
By our assumptions we know that $V_x$ cannot be classical, hence by \cite{Kis6} cannot be potentially semi-stable.  Hence $V_x$ satisfies $(\ddagger)$.  $x$ must lie on a one dimensional irreducible component which is either generically special or principal series.  The result follows by the level raising and lowering theorems.
\end{proof}

\bibliographystyle{plain}
\bibliography{bibliography}

\begin{thebibliography}{10}

\bibitem{BC}
J.~Bella{\"{\i}}che and G.~Chenevier.
\newblock Families of {G}alois {R}epresentations and {S}elmer {G}roups.
\newblock {\em Asterisque}, 324, 2009.

\bibitem{LBD}
Laurent Berger.
\newblock Repr\'esentations {$p$}-adiques et \'equations diff\'erentielles.
\newblock {\em Invent. Math.}, 148(2):219--284, 2002.

\bibitem{LB}
Laurent Berger.
\newblock An introduction to the theory of {$p$}-adic representations.
\newblock In {\em Geometric aspects of {D}work theory. {V}ol. {I}, {II}}, pages
  255--292. Walter de Gruyter GmbH \& Co. KG, Berlin, 2004.

\bibitem{BOC}
Gebhard B{\"o}ckle.
\newblock On the density of modular points in universal deformation spaces.
\newblock {\em Amer. J. Math.}, 123(5):985--1007, 2001.

\bibitem{KB}
Kevin Buzzard.
\newblock Eigenvarieties.
\newblock In {\em {$L$}-functions and {G}alois representations}, volume 320 of
  {\em London Math. Soc. Lecture Note Ser.}, pages 59--120. Cambridge Univ.
  Press, Cambridge, 2007.

\bibitem{CAR}
Henri Carayol.
\newblock Sur les repr\'esentations {$l$}-adiques associ\'ees aux formes
  modulaires de {H}ilbert.
\newblock {\em Ann. Sci. \'Ecole Norm. Sup. (4)}, 19(3):409--468, 1986.

\bibitem{Chen}
Ga{\"e}tan Chenevier.
\newblock Quelques courbes de {H}ecke se plongent dans l'espace de {C}olmez.
\newblock {\em J. Number Theory}, 128(8):2430--2449, 2008.

\bibitem{CM}
R.~Coleman and B.~Mazur.
\newblock The eigencurve.
\newblock In {\em Galois representations in arithmetic algebraic geometry
  ({D}urham, 1996)}, volume 254 of {\em London Math. Soc. Lecture Note Ser.},
  pages 1--113. Cambridge Univ. Press, Cambridge, 1998.

\bibitem{PCT}
Pierre Colmez.
\newblock S«erie principale unitaire pour gl2 (qp ) et repr«esentations
  triangulines de di- mension 2.
\newblock {\em Preprint}.

\bibitem{PC}
Pierre Colmez.
\newblock Repr\'esentations triangulines de dimension 2.
\newblock {\em Ast\'erisque}, (319):213--258, 2008.
\newblock Repr{\'e}sentations $p$-adiques de groupes $p$-adiques. I.
  Repr{\'e}sentations galoisiennes et $(\phi,\Gamma)$-modules.

\bibitem{DJ}
A.~J. de~Jong.
\newblock Crystalline {D}ieudonn\'e module theory via formal and rigid
  geometry.
\newblock {\em Inst. Hautes \'Etudes Sci. Publ. Math.}, (82):5--96 (1996),
  1995.

\bibitem{EM}
Mark Emerton.
\newblock Local-global compatibility in the p-adic langlands programme for gl2/
  q.
\newblock {\em In Preparation}.

\bibitem{Gee}
Toby Gee.
\newblock Automorphic lifts of prescribed type.
\newblock {\em Arxiv Preprint}, 2008.

\bibitem{Ked}
Kiran~S. Kedlaya.
\newblock A {$p$}-adic local monodromy theorem.
\newblock {\em Ann. of Math. (2)}, 160(1):93--184, 2004.

\bibitem{Kis4}
Mark Kisin.
\newblock Overconvergent modular forms and the {F}ontaine-{M}azur conjecture.
\newblock {\em Invent. Math.}, 153(2):373--454, 2003.

\bibitem{Kis2}
Mark Kisin.
\newblock Modularity of 2-dimensional {G}alois representations.
\newblock In {\em Current developments in mathematics, 2005}, pages 191--230.
  Int. Press, Somerville, MA, 2007.

\bibitem{Kis5}
Mark Kisin.
\newblock Moduli of finite flat groups schemes and modularity.
\newblock {\em to appear in Annals of Mathematics}, 2007.

\bibitem{Kis1}
Mark Kisin.
\newblock Potentially semi-stable deformation rings.
\newblock {\em J. Amer. Math. Soc.}, 21(2):513--546, 2008.

\bibitem{Kis6}
Mark Kisin.
\newblock The {F}ontaine-{M}azur conjecture for {${\rm GL}_2$}.
\newblock {\em J. Amer. Math. Soc.}, 22(3):641--690, 2009.

\bibitem{MA}
Barry Mazur.
\newblock An introduction to the deformation theory of {G}alois
  representations.
\newblock In {\em Modular forms and {F}ermat's last theorem ({B}oston, {MA},
  1995)}, pages 243--311. Springer, New York, 1997.

\bibitem{New}
James Newton.
\newblock Geometric level raising for p-adic automorphic forms.
\newblock {\em Arxiv Preprint}, 2009.

\bibitem{PAU}
Alexander Paulin.
\newblock Local to global compatibility on the eigencurve.
\newblock {\em Axiv Preprint}, 2008.

\bibitem{TS}
Takeshi Saito.
\newblock Modular forms and {$p$}-adic {H}odge theory.
\newblock {\em Invent. Math.}, 129(3):607--620, 1997.

\bibitem{ST}
Jean-Pierre Serre and John Tate.
\newblock Good reduction of abelian varieties.
\newblock {\em Ann. of Math. (2)}, 88:492--517, 1968.

\end{thebibliography}

\end{document}